\newtheorem{assumption}{Assumption}
\def\Box{{\hbox{\raisebox{0.0em}{\rlap{$\sqcap$}}\kern0em%
			\raisebox{-0.0emp}{$\sqcup$}}} } 
\def\bfi#1{\textbf{#1}} 
\def\att{					
	\marginpar[ \hspace*{\fill} \raisebox{-0.2em}{\rule{2mm}{1.2em}} ]
	{\raisebox{-0.2em}{\rule{2mm}{1.2em}} }
}
\def\at#1{[*** \att #1 ***]}  
\def\eeq{\end{equation}}
\def\lbeq#1{\begin{equation} \label{#1}}
\def\bary{\begin{array}}
	\def\eary{\end{array}}
\def\gzit#1{{\rm (\ref{#1})}} 			
\def\D{\displaystyle}				
\def\ol{\overline}
\def\wh{\widehat}
\def\wt{\widetilde}
\def\pt{$\bullet$ }
\def\Nz{\mathbb{N}}
\def\Rz{\mathbb{R}}
\def\opt{\fct{opt}}
\newcommand{\s}{\fct{s}}
\def\fct#1{\mathop{\rm #1}}	                
\def\opt{\fct{opt}}
\def\D{\displaystyle}				
\def\ol{\overline}
\def\pt{$\bullet$ }
\def\ealg{\end{alg}}
\def\sol{{{\fct{sol}}}}
\def\Argmin{\fct{argmin}}
\def\bfi#1{{\bf{#1}}}
\begin{document}
	\title{An efficient penalty decomposition algorithm for minimization over sparse symmetric sets}
	
\author{Ahmad Mousavi \and  Morteza Kimiaei$^*$\and Saman Babaie--Kafaki \and  Vyacheslav Kungurtsev }

\institute{A. Mousavi \at
Department of Mathematics and Statistics, American University, Washington, DC, USA \\
\email{mousavi@american.edu}
\and 
M. Kimiaei (corresponding author)\at
Fakult\"at f\"ur Mathematik, Universit\"at Wien, Oskar-Morgenstern-Platz 1,  A-1090, Wien, Austria\\
\email{kimiaeim83@univie.ac.at} 
\and 
S. Babaie--Kafaki \at 
Faculty of Engineering, Free University of Bozen--Bolzano, Bolzano 39100, Italy\\
\email{saman.babaiekafaki@unibz.it}
\and 
V. Kungurtsev \at
Department of Computer Science, Czech Technical University, Karlovo Namesti 13, 121 35 Prague 2, Czech Republic\\
\email{vyacheslav.kungurtsev@fel.cvut.cz}
}

\maketitle

\begin{abstract}
\begin{sloppypar}
This paper proposes an improved quasi-Newton penalty decomposition algorithm for the minimization of continuously differentiable functions, possibly nonconvex, over sparse symmetric sets. The method solves a sequence of penalty subproblems approximately via a two-block decomposition scheme: the first subproblem admits a closed-form solution without sparsity constraints, while the second subproblem is handled through an efficient sparse projection over the symmetric feasible set.
Under a new assumption on the gradient of the objective function, weaker than global Lipschitz continuity from the origin, we establish that accumulation points of the outer iterates are basic feasible and cardinality-constrained Mordukhovich stationarity points.
To ensure robustness and efficiency in finite-precision arithmetic, the algorithm incorporates several practical enhancements, including an enhanced line search strategy based on either backtracking or extrapolation, and four inexpensive diagonal Hessian approximations derived from differences of previous iterates and gradients or from eigenvalue-distribution information. Numerical experiments on a diverse benchmark of $30$ synthetic and data-driven test problems, including machine-learning datasets from the UCI repository and sparse symmetric instances with dimensions ranging from $10$ to $500$, demonstrate that the proposed algorithm is competitive with several state-of-the-art methods in terms of efficiency, robustness, and strong stationarity.
\end{sloppypar}
\end{abstract}

\keywords{Sparse optimization \and penalty decomposition method \and
diagonal quasi-Newton method \and line search method \and global convergence}

\vspace{0.2cm} {\em 2000 AMS Subject Classification: 90C26; 90C30; 65K05}.

\begin{sloppypar}
\section{Introduction}

In modern applications such as machine learning, signal processing, and data mining, high-dimensional data pose significant computational and modeling challenges. To address these challenges, one widely adopted strategy is to impose \textbf{sparsity}, i.e., to focus on a limited number of relevant features while discarding the rest. Sparsity not only reduces computational cost but also improves interpretability, making it a key principle in large-scale optimization.

A central challenge in sparse optimization is understanding the structure of the feasible set. Many important problem classes are defined over \textbf{symmetric sets}, i.e., sets that are invariant under permutations or sign changes of the variables. Examples include the full space, the nonnegative orthant, the simplex, norm balls, and box constraints. Each of these sets induces a specific structure on the feasible solutions, which must be carefully respected in sparse projection algorithms.

Minimization over sparse symmetric sets is challenging because it requires combining the combinatorial nature of sparsity with structural constraints. Efficient projection rules exploit the geometry of each set: in the full space, the projection keeps the largest components; in the orthant, nonnegativity must be preserved; in the simplex or unit-sum set, normalization is enforced; and for norm or box constraints, rescaling or clipping may be needed. Understanding these structural nuances is crucial for designing algorithms that are both computationally efficient and theoretically sound.

\subsection{Problem Definition}

Following recent research trends, optimization problems that combine sparsity with additional structural constraints have gained considerable attention. Motivated by applications in signal recovery, image processing, and data compression, we study the following general cardinality-constrained optimization problem:
\begin{equation}\label{pr:p-original} \tag{{\tt CCOP}}
	\min_{x\in C \cap C_{\s}} f(x),
\end{equation}
where the objective function $f : C \cap C_{\s} \to \mathbb{R}$ is assumed to be twice continuously differentiable, with gradient $g(x) := \nabla f(x)$, but not necessarily convex, and 
\begin{equation}\label{e.bc-s}
	C_{\s}:=\{x\in \mathbb{R}^n \mid \|x\|_0\le s\},
\end{equation}
and $C \subseteq \mathbb{R}^n$ is a closed and convex set representing additional structure. Examples of symmetric sets include $\mathbb{R}^n$, the nonnegative orthant, the simplex, $\ell_p$-norm balls, and box constraints. These sets are invariant under sign changes and/or permutations of coordinates, a property that strongly influences sparse projection rules.

The set $C_{\s}$ enforces the \textbf{sparsity constraint}, with $s \in \mathbb{Z}_+$ denoting the target sparsity level ($s < n$), and $\|x\|_0$ indicating the number of nonzero components of $x$. Despite its simple definition, $C_{\s}$ is highly nonconvex and disconnected, and problems of the form \eqref{pr:p-original} are in general NP-hard. 
It is well known that even testing the feasibility of the sparsity set is NP-complete~\cite{kanzow2021,Lapucci2020}, so that \eqref{pr:p-original} inherits this fundamental computational hardness. 

In the context of such problems, several stationarity notions have been proposed, 
including Lu--Zhang points \cite{lu2013sparse}, basic feasible points \cite{Beck2013}, {\tt L}-stationarity~\cite{Beck2013}, and Mordukhovich-stationarity ({\tt M}-stationarity) points~\cite{kanzow2021,Mordukhovich2018,Ribeiro2022}. These concepts play a key role in analyzing the convergence behavior of penalty decomposition and related algorithms, and will be revisited later in the paper.

This broad formulation covers many important models. For instance, when $f$ is quadratic and $C = \mathbb{R}^n$, one recovers the classical sparse recovery problem in compressive sensing. Incorporating additional convex sets $C$, such as the nonnegative orthant, the simplex, or norm constraints, yields a wide class of structured sparse optimization problems. In the terminology of~\cite{kanzow2021,Ribeiro2022}, problem \eqref{pr:p-original} belongs to the class of mathematical programs with cardinality constraints.

\subsection{Related Work}

Sparse optimization problems aim to find solutions with few nonzero components, often under structural or functional constraints. A variety of algorithmic approaches have been proposed to tackle these problems, each with its own advantages and limitations. In this section, we categorize the most prominent methods and discuss their similarities, differences, strengths, and weaknesses, with a particular focus on how penalty decomposition methods compared to the others. We begin with approaches offering the weakest theoretical guarantees and proceed toward increasingly stronger ones.

\textbf{Greedy algorithms} such as orthogonal matching pursuit and forward/backward selection incrementally construct sparse solutions by selecting variables according to local criteria. These methods are computationally efficient and easy to implement. Yet, they often suffer from suboptimal selections and are sensitive to noise and variable correlations, which limit their robustness and ability to scale reliably in high-dimensional, correlated settings \cite{cvetkovic2022greedy,wen2025randomized}.

A second class providing relatively weak guarantees is \textbf{convex relaxation}, where the original nonconvex sparsity constraint (typically represented by the $\ell_0$-norm) is replaced with a convex surrogate like the $\ell_1$-norm. Popular methods in this class include Basis Pursuit, {\tt LASSO}, and the Elastic Net \cite{Chen1998,Esmaeili2018,tibshirani1996regression}. These approaches benefit from convex optimization theory and mature solvers, but they typically return only approximately sparse solutions and may introduce significant shrinkage bias.

Moving toward stronger guarantees, \textbf{thresholding and iterative shrinkage methods}, including Iterative Hard Thresholding ({\tt IHT}) and proximal gradient variants, project intermediate solutions onto the set of $s$-sparse vectors \cite{Blumensath2009}. These methods are well-suited for large-scale problems and admit global convergence guarantees under restricted conditions. Nonetheless, their performance is sensitive to step-size rules, tuning parameters, and problem conditioning \cite{bergamaschi2025probabilistic,hu2025convergence,zhao2020optimal}.

\textbf{Stationarity-based algorithms} form a considerably stronger class. Multiple frameworks have been designed to compute stationarity points of nonconvex sparse optimization problems \cite{Beck2013,Beck2016,Lapucci2020,lu2013sparse}. These methods aim to satisfy first-order necessary conditions—even in nonsmooth and nonconvex settings—but often converge only to generalized notions such as Lu--Zhang stationarity points, which may still fail to guarantee strict feasibility or desirable structural properties \cite[Example 2.1]{Lapucci2020}.

In contrast, \textbf{mixed-integer and combinatorial methods} directly encode sparsity through binary variables, enabling them to certify global optimality. Examples include mixed-integer quadratic programming formulations \cite{Bertsimas2016,Burdakov2016}. Their major limitation is scalability, as the combinatorial search space grows exponentially with problem size.

Among these diverse methodological families, we emphasize \textbf{penalty decomposition ({\tt PD}) methods} because they serve as the foundation of the algorithms analyzed in this paper and offer a balanced trade-off between theoretical structure and computational practicality. Penalty and augmented Lagrangian techniques incorporate sparsity constraints through penalization or variable decompositions, enabling the separation of difficult nonsmooth or combinatorial components from smooth differentiable ones. {\tt PD} methods have received increasing attention due to their ability to decouple complex constraints and nonconvex cost functions into tractable subproblems solvable via block coordinate descent \cite{lu2013sparse}. They are often less sensitive to initializations, especially when warm-started using simpler heuristics.

Furthermore, {\tt PD} algorithms are compatible with limited-memory quasi-Newton updates \cite{LBFGS,LMBOPT}, providing scalability and effective use of curvature information while allowing inexact line searches \cite{CLS}. The flexibility of {\tt PD} frameworks is highlighted by numerous applications: Dong and Zhu \cite{DongZhu} integrated {\tt IHT}-type updates for adaptive sparsity-level detection; Lu et al.~\cite{LuZhangLi} applied {\tt PD} to rank minimization; and Kanzow and Lapucci \cite{KanzowLapucci} proposed an inexact {\tt PD} method for geometric constraints such as cardinality constraints and rank constraints. Additional applications span multi-objective sparse optimization \cite{Lapucci}, wavelet-frame $\ell_0$ image reconstruction \cite{ZhangDongLu}, sparse time-series filtering \cite{PatrascuNecoara}, cardinality-constrained portfolio optimization \cite{mousavi2025cardinality}, and nuclear norm minimization with $\ell_1$ fidelity terms \cite{WangJinShang}. Algorithmic relaxations designed to reduce subproblem complexity have also been proposed in \cite{Spiridonovetal}.

Compared to the other paradigms discussed above, {\tt PD} methods preserve the problem's inherent structure while enabling effective enforcement of sparsity and feasibility. They exhibit robustness and flexibility, particularly in extensions involving structured sparse sets (e.g., simplex constraints and mixed-norm balls). When paired with quasi-Newton updates, they offer both scalability and accurate practical performance. Thus, while simpler methods may provide speed or convex surrogates ensure elegant theory, {\tt PD} represents a compelling middle ground.

However, existing inexact {\tt PD} algorithms (e.g.,~\cite{Lapucci2020,lu2013sparse}) are typically guaranteed to converge only to Lu--Zhang stationarity points and, under mild assumptions, to {\tt BF} points. These concepts remain weaker than cardinality-constrained Mordukhovich ({\tt CC-M}) stationarity \cite{kanzow2021}, which is the strongest \emph{variationally necessary} first-order optimality condition for cardinality-constrained optimization in the absence of constraint qualifications.  Stronger notions, such as {\tt CC-S} (strong stationarity), may exist but are not guaranteed to hold at all local minimizers and typically require additional regularity or support-identification assumptions. Developing {\tt PD} variants that converge directly to {\tt CC-M}-stationarity points would bridge this gap between algorithmic guarantees and advanced optimality theory, thereby significantly enhancing the robustness and applicability of {\tt PD} methods across broader classes of nonsmooth and geometrically constrained optimization problems \cite{KanzowLapucci,kanzow2021,Ribeiro2022}.

\subsection{Main Contributions of our Work}

In this study, we propose an improved quasi-Newton penalty decomposition algorithm, called {\tt PD-QN}, for solving optimization problems involving continuously differentiable functions over sparse symmetric sets. Like the classical penalty decomposition algorithm~\cite{Lapucci2020,lu2013sparse}, {\tt PD-QN} approximates the solution of a sequence of penalized subproblems using a two-block decomposition scheme. At each iteration of its inner loop, {\tt PD-QN} solves the first subproblem, denoted by \gzit{pr:px-subproblem}, in closed form with respect to the variable $x$, without sparse symmetric sets. It then solves the second subproblem, \gzit{pr:py-subproblem}, with respect to $y$ \bfi{restricted to its current support}. This restricted minimization is performed explicitly and at low cost, and---especially---it introduces a new feature that was not previously incorporated into existing {\tt PD} algorithms. By solving over the current support, {\tt PD-QN} both preserves sparsity and significantly improves computational efficiency over prior methods.

Current inexact {\tt PD} algorithms guarantee convergence to Lu--Zhang stationarity points and, under mild assumptions, to basic feasible ({\tt BF}) points.  These stationarity concepts, however, are tailored to purely cardinality-constrained or symmetric-set formulations and therefore do not fully capture the broader geometry arising when cardinality constraints are coupled with general inequality constraints. In contrast, {\tt CC-M}-stationarity---the appropriate Mordukhovich-type stationarity notion for fully general cardinality-constrained problems---is strictly stronger and provides a unifying optimality concept beyond the symmetric-set setting. 

Developing {\tt PD} variants that converge directly to {\tt CC-M}-stationarity points---rather than merely to Lu--Zhang or {\tt BF} points---thus bridges a substantial gap between existing algorithmic guarantees and modern variational optimality theory for general cardinality-constrained programs.  
While convergence to {\tt CC-S}-stationarity points cannot be expected in general without additional assumptions, {\tt CC-M} represents the strongest stationarity notion that can be guaranteed globally for penalty decomposition methods (see, e.g.,~\cite{KanzowLapucci,kanzow2021}).

\subsubsection{Algorithmic Features of our Methodology}

Our main algorithmic features are summarized as follows:
\begin{itemize}
	\item[($i$)] \bfi{A new reformulation of the two penalty subproblems in the inner loop:} In the proposed formulation, the subproblem \gzit{pr:px-subproblem} is entirely unconstrained---there are no sparse symmetric sets. In contrast, the subproblem \gzit{pr:py-subproblem} is solved over a sparse symmetric set. In particular, \gzit{pr:py-subproblem} is minimized \bfi{only over the current support} of the iterate, rather than over the full space. This support-aware minimization is a key novelty of {\tt PD-QN}, distinguishing it from existing penalty decomposition algorithms. The motivation for this reformulation stems from the observation that alternating between two fully constrained subspaces, as done in classical {\tt PD} methods, can be computationally inefficient. In practice, solving \gzit{pr:px-subproblem} without sparse symmetric sets is significantly simpler, as it admits a closed-form solution. Meanwhile, the restricted form of \gzit{pr:py-subproblem}, despite enforcing sparsity and symmetry, remains tractable and can be solved explicitly using a low-cost strategy. Beyond its computational benefits, this reformulation is also critical for the convergence analysis: by restricting \gzit{pr:py-subproblem} to the current support, {\tt PD-QN} ensures that its iterates satisfy a basic feasibility condition, and ultimately converge to a {\tt BF} point of the original problem.

	\item[($ii$)] \bfi{Construction of an accelerated line search method to efficiently solve \gzit{pr:px-subproblem}:} Our line search is performed either by a backtracking or an extrapolation framework, starting with the unit step size, which is classically advisable for the quasi-Newton algorithms, especially near the optimal solution. If a reduction in the model function value is found with the initial setting $\alpha=1$, then extrapolation is performed to leave the regions containing a saddle point or a maximizer. Otherwise, the step size is reduced as long as the line search condition is violated. In this strategy, only one objective function evaluation is required at the accepted point. In contrast, the model function values at the other trial points are computed without any additional objective function evaluations. Hence, our line search has a lower computational cost than any inexact line search that computes the objective function value at each trial point of the line search procedure.

	\item[($iii$)] \bfi{Construction of four diagonal Hessian approximations to handle large-scale problems:} Three of such diagonal formulas are constructed based on the classic limited-memory {\tt BFGS} formula by forming and updating two matrices whose columns are the most recent step change and the most recent gradient change. The other approximation is devised based on improving the distribution of the diagonal entries (or equivalently, the eigenvalues) of the diagonal Hessian estimation, as a measure to promote well-conditioning. Since, unlike the {\tt BFGS} update, these four diagonal Hessian approximations do not necessarily guarantee the curvature condition \cite{NocedalWright}, some proper safeguards are considered for the given Hessian approximations as well.  
	   
\item[($iv$)] \bfi{Warm-start and stagnation recovery:}  
We begin with a warm-start phase using the \texttt{BFS} (the basic feasible search of \cite[Algorithm~5]{Beck2016}) routine tailored to sparse structures, which quickly proposes a promising support and refines it with a short restricted \texttt{FISTA} \cite{FISTA} update. This produces a strong initial point and significantly reduces the effort required by the main solver. If progress later completely halts, we apply the lightweight \texttt{PSS} (the sparse-simplex method of \cite{Beck2013}) perturbation, which performs small support-growth or swap moves combined with simple coordinate corrections. This mechanism provides sufficient variation in the support to escape poor stationarity points and enables the main algorithm to resume stable convergence.

\end{itemize}

\subsubsection{Strong Global Convergence Feature}

It is well understood that the convergence analysis of many classical optimization 
algorithms often relies on assumptions regarding the regularity of the objective 
function or its gradient. A common and powerful assumption is the 
\textbf{Lipschitz continuity of the gradient}, which ensures that the function 
behaves in a sufficiently smooth way so that its values can be well approximated 
by a quadratic model based on local gradient information. In practical terms, this 
assumption prevents the function from changing too abruptly, which is critical for 
establishing convergence rates of gradient-based algorithms.

A related but weaker notion is sometimes referred to as \textbf{Lipschitz continuity 
	from the origin}. This condition requires only that the gradient grows at most linearly 
with the distance from the origin. Unlike full Lipschitz continuity, however, it does 
not provide uniform control over the gradient across the entire domain and thus 
offers a much weaker form of regularity.

These two assumptions serve different analytical purposes and should not be 
confused. Standard Lipschitz continuity of the gradient is a strong smoothness 
condition that underpins many classical theoretical guarantees, whereas Lipschitz 
continuity from the origin is merely a basic growth condition that conveys far less 
information about the function’s behavior. Confusing the two may lead to 
oversimplified or even incorrect conclusions in theoretical developments.

In the literature on {\tt PD} methods, convergence has typically 
been established under relatively strong smoothness assumptions. For example, the 
original analysis by Lu and Zhang \cite{lu2013sparse} required Lipschitz continuity of the gradient, 
while the more recent inexact {\tt PD} framework of Lapucci et al. \cite{Lapucci2020} still relied on comparable regularity conditions to guarantee convergence to 
Lu--Zhang stationarity points. In both cases, the analysis crucially depends on global 
gradient smoothness or growth conditions.

In this work, we establish global convergence of our algorithm under a new and 
even milder assumption than Lipschitz continuity from the origin. Unlike classical 
assumptions that require either full gradient smoothness or uniform growth bounds, 
our analysis only relies on a relaxed gradient growth condition, which allows the 
gradient to grow linearly outside a bounded region. Crucially, convergence to a {\tt BF} point is guaranteed not only by this weaker assumption but also by a distinctive feature of our algorithm: in each iteration, the subproblem $(P_y)$ is minimized over the current support of the iterate. This support-restricted formulation plays a central role in our analysis and, to the best of our knowledge, yields the first convergence guarantee for penalty decomposition methods that ensures convergence specifically to {\tt BF} and {\tt CC-M}-stationarity points.

In addition to these relaxed smoothness requirements, our convergence analysis exploits a key structural property of the penalized models, namely their \bfi{uniform strong convexity under bounded penalty parameters}. Owing to the
quadratic form of the penalty models and the uniform positive definiteness of their Hessians, the models satisfy global quadratic growth and error bound
conditions with constants that are independent of the outer iteration index and the penalty parameter. These properties yield a finite-length argument for the outer iterates and ensure convergence of the \bfi{entire} sequence, rather than merely subsequential convergence. Although this behavior can be interpreted
within the Kurdyka--\L{}ojasiewicz (KL) framework (with exponent $\tfrac12$), no explicit KL assumption is required here: the result follows directly from strong
convexity and descent. To the best of our knowledge, leveraging this uniform strong convexity to obtain full-sequence convergence is not standard in existing penalty decomposition analyses and provides an additional layer of
robustness in our global convergence guarantees.

Finally, we emphasize that our convergence results are conceptually similar to those 
established for general nonsmooth or geometrically constrained settings \cite{kanzow2021}, in which penalty decomposition 
schemes also guarantee convergence to {\tt CC-M}-stationarity points. The key difference is 
that our analysis is tailored to cardinality-constrained optimization and exploits the 
support-restricted subproblem structure. In contrast, the existing results address other 
classes of nonsmooth feasibility sets. Thus, our contribution complements the general 
theory by providing the first {\tt BF}/{\tt CC-M}-stationarity convergence guarantees for this 
particular but practically important problem class.

\subsubsection{Our Computational Plans}

We perform numerical experiments on a benchmark set of 30 test problems,
including the datasets Iris, Wine, and Boston Housing (from the UCL repository),
as well as several sparse symmetric instances discussed in \cite{Beck2016},
together with sparsity-constrained examples drawn from the survey article~\cite{Tillmann2024}.
The problem dimensions in our test suite range from 10 to 500.

We compare our method against several state-of-the-art algorithms---iterative hard thresholding~\cite{Beck2013},
the sparse simplex method~\cite{Beck2013}, greedy sparse simplex~\cite{Beck2013},
basic feasible search~\cite{Beck2016}, and zero-{\tt CW} search~\cite{Beck2016}---which are commonly used to compute approximate global minimizers and stationarity points for cardinality-constrained problems.
The selected test problems are deliberately challenging, combining explicit cardinality constraints, medium- to high-sparsity regimes, and symmetric feasible sets that give rise to multiple competing stationarity supports. All methods are evaluated using a unified stopping framework based on objective reduction and violations of {\tt CC-S} (strong) stationarity, as detailed in Section~\ref{subsec:stopping-and-stationarity}. 
The use of {\tt CC-S} in the stopping criteria serves purely as a numerical quality measure: since {\tt CC-S} implies both {\tt CC-M} and {\tt BF} stationarity, any iterate satisfying the numerical stopping conditions necessarily exceeds the theoretical guarantees required by our convergence analysis. 
The results demonstrate that our algorithm is competitive with these established techniques.

\subsection{Paper Organization}

The organization of our study is outlined as follows. 
Section~\ref{preliminaries} is devoted to a detailed discussion of foundational concepts and general methodological tools used throughout the paper. 
Section~\ref{main} introduces our new algorithm within the quasi-Newton penalty decomposition framework, together with its main computational components. 
Convergence properties of the proposed method are established in Section~\ref{convergence}. 
Section~\ref{numerica} reports extensive numerical experiments illustrating the practical efficiency of the algorithm. 
Finally, concluding remarks are presented in Section~\ref{conclusions}.

\subsection{Supplemental Theory and Algorithmic Components}

Additional results and algorithmic details are provided in the supplementary material available at \cite{suppMat}, organized as follows: Section~2 lists explicit stationarity conditions for several convex sets that appear in our analysis. Section~3 reviews basic feasibility and {\tt L}-stationarity under symmetry assumptions and includes a practical test for an approximate notion of basic feasibility based on a single super support set. Section~4 contains the proof of Lemma~1, which establishes the cone continuity property for convex symmetric sets. Section~5 presents the complete proof of Theorem~1 using standard calculus rules for Fréchet normal cones. Section~6 compares basic feasibility with the various ${\tt CC}$-stationarity notions and clarifies their position within the stationarity hierarchy. Section~7 describes practical enhancements of our method, including an improved line search and several diagonal Hessian approximations. Section~8 summarizes sparse projection algorithms for symmetric sets, and Section~9 reports additional numerical comparisons among our algorithm variants.

An earlier unpublished version of this work appears in \cite{Mousavi2025class}. The present paper differs substantially in that we now establish convergence to an {\tt M}-stationarity point, whereas the preliminary version only proved convergence to a Lu--Zhang stationarity point. In addition, the algorithm has been extended from handling cardinality problems to addressing the full class of cardinality-constrained optimization problems over symmetric sets, which includes the bound-constrained case. As a result, the current version offers a stronger theoretical foundation together with markedly improved numerical performance.
\end{sloppypar}

\section{Preliminaries and Methodological Foundations} \label{preliminaries}

This section begins by outlining the key foundational concepts that underpin our analysis and approach. We then sequentially introduce the notions of symmetric sets, sparse projection techniques tailored to these sets, and various first-order optimality conditions associated with the corresponding optimization problems. Most of the definitions presented here are drawn from the work of Beck and Hallak \cite{Beck2016},  Kanzow et al. \cite{kanzow2021}, and Mordukhovich \cite{Mordukhovich2006}. We include them to ensure the paper is self-contained and accessible, allowing readers to follow the developments without needing to consult the original reference \cite{Beck2016,kanzow2021,Mordukhovich2006}.

\subsection{Notation and Foundational Concepts}

Let $[n]:=\{1,2,\ldots,n\}$. The $n$-dimensional simplex is
\[
\Delta_n := \left\{ x \in \mathbb{R}^n \mid \sum_{i=1}^n x_i = 1,\ x_i \geq 0 \ \forall i \in [n] \right\},
\]
i.e., the set of all nonnegative vectors with components summing to one. The unit-sum set is
\[
\Delta'_n := \left\{ x \in \mathbb{R}^n \mid \sum_{i=1}^n x_i = 1 \right\},
\]
which imposes the same equality constraint but without sign restrictions. The sign vector $\operatorname{sign}(x)$ of a given $x \in \mathbb{R}^n$ is the vector whose $i$th component  is $\operatorname{sign}(x)_i :=1$ if  $x_i \geq 0$ and $\operatorname{sign}(x)_i :=-1$ otherwise. 
Given a set $S \subseteq \mathbb{R}^n$ and a vector $x \in \mathbb{R}^n$, the orthogonal projection of $x$ onto $S$ is defined by
\[
P_S(x) = \Argmin\left\{ \|y - x\| : y \in S \right\},
\]
where $\|.\|$ denotes the $\ell_2$-norm on $\mathbb{R}^n$. If the set $S$ is closed, then $P_S(x)$ is nonempty. Furthermore, if $S$ is also convex, then $P_S(x)$ is a singleton, and we identify $P_S(x)$ with the unique vector that it contains.

Given a closed and convex set $C \subseteq \mathbb{R}^n$, and a vector $x \in \mathbb{R}^n$, the sparse projection problem seeks to find an element in the orthogonal projection set of $x$ onto $C \cap C_{\s}$, where $C_{\s}$ is the set of all vectors in $\mathbb{R}^n$ with at most $s$ nonzero components. Formally, the sparse projection set is defined by
\[
P_{C_{\s} \cap C}(x) = \Argmin_{z \in C \cap C_{\s}} \|z - x\|^2.
\]
We refer to $P_{C_{\s} \cap C}$ as the $s$-sparse projection set onto $C$, and any element of this set is called an {$s$-sparse projection vector onto $C$, or simply a sparse projection vector. Since the intersection $C \cap C_{\s}$ is closed, the set $P_{C_{\s} \cap C}(x)$ is nonempty for any $x \in \mathbb{R}^n$. However, because $C_{\s} \cap C$ is nonconvex, the projection set $P_{C_{\s} \cap C}(x)$ is not necessarily a singleton. When $C = \mathbb{R}^n$, the sparse projection reduces to $P_{C_{\s} \cap \mathbb{R}^n}(x) = P_{C_{\s}}(x)$, which consists of all vectors formed by retaining the $s$ components of $x$ with the largest absolute values (setting all others to zero). If there are some ties among the largest absolute values, multiple selections are possible, and hence, the projection set can contain more than one vector. For further details, refer to \cite[Section 2]{suppMat}, which is based on the work presented in \cite{Beck2016}.
	
\begin{sloppypar}

	For any $p \geq 1$, the $\ell_p$-ball in the space $\mathbb{R}^n$, centered at the origin with radius 1, is defined as
	\[
	B^n_p[0, 1] = \left\{ x \in \mathbb{R}^n \ \middle| \ \|x\|_p \leq 1 \right\},
	\]
	where $\|x\|_p = \left( \sum_{i=1}^n |x_i|^p \right)^{1/p}$ is the $\ell_p$-norm of $x$.

	The {\bf support set} $I_1(x):=\{i\in[n]\mid x_i\neq0\}$ of a vector $x\in\Rz^n$ and its complement, called the {\bf off-support set} of a vector $x\in\Rz^n$, $I_0(x):=\{i\in[n]\mid x_i=0\}$ are also defined accordingly. A vector $x\in\Rz^n$ has a {\bf full support} if $\lVert x\rVert_0=s$, and an {\bf incomplete support} if  $\lVert x\rVert_0<s$. A set $\mathcal{L}\subseteq[n]$ is called a {\bf super support} of a vector $y\in C_{\s}\cap C$ if $I_1(y)\subseteq \mathcal{L}$ and $\vert\mathcal{L}\vert=s$. Note that if $y$ has full support, the only super support set is the support set itself. Otherwise, the number of possible super supports is 
	\[
	\begin{pmatrix}
		n-\|y\|_0 \cr s-\|y\|_0 
	\end{pmatrix}.
	\]
	Given a vector $x \in \mathbb{R}^n$, and a subset of indices $\mathcal{L} \subseteq [n]$,  $x_\mathcal{L} \in \mathbb{R}^{|\mathcal{L}|}$ denotes the vector composed of the components of $x$ indexed by $\mathcal{L}$. Let $U_\mathcal{L}$ denote the submatrix of the $n \times n$ identity matrix $I_n$ formed by selecting the columns corresponding to the index set $\mathcal{L}$; then $x_\mathcal{L} = U_\mathcal{L}^T x$. Moreover, if $\mathcal{L}$ is a super support of a vector $x \in \mathbb{R}^n$, then $x = U_\mathcal{L} x_\mathcal{L}$. Given a set $C \subseteq \mathbb{R}^n$, the restriction of $C$ to the index set $\mathcal{L}$ is defined as
	\[
	C_\mathcal{L} := \left\{ x \in \mathbb{R}^{|\mathcal{L}|} : U_\mathcal{L} x \in C \right\}.
	\]
	
	Given a continuously differentiable function $f : \mathbb{R}^n \to \mathbb{R}$ and a subset $\mathcal{L} \subseteq [n]$, the restriction of the gradient $g(x)$ to the index set $\mathcal{L}$ is denoted by $g_\mathcal{L} (x) = U_{\mathcal{L}}^T g(x)$.

	\subsection{Symmetric Sets}\label{sec:project}
	
	Let $\mathfrak{S}_n$ denote the {\bf symmetric group of all permutations} of the set of indices $[n]$. For a vector $x \in \mathbb{R}^n$ and a permutation $\pi \in \mathfrak{S}_n$, the permuted vector $x^\pi \in \mathbb{R}^n$ is defined component-wise as $(x^\pi)_i := x_{\pi(i)}$. For example, let  
	\[
	x = \begin{pmatrix} 4 \\ 1 \\ 6 \end{pmatrix}\in\Rz^3, \quad \pi \in \mathfrak{S}_3 \ \ \mbox{with } \pi(1) = 3, \ \pi(2) = 2, \ \pi(3) = 1.
	\]
	Then, the permuted vector is
	\[
	x^\pi = \begin{pmatrix} x_{\pi(1)} \\ x_{\pi(2)} \\ x_{\pi(3)} \end{pmatrix} = \begin{pmatrix} x_3 \\ x_2 \\ x_1 \end{pmatrix} = \begin{pmatrix} 6 \\ 1 \\ 4 \end{pmatrix}.
	\]
	A permutation $\pi \in \tilde{\mathfrak{S}}_n$ is called a \bfi{sorting permutation} of a vector $x \in \mathbb{R}^n$ whose entries are rearranged in a non-increasing order in the sense that $x_{\pi(1)} \geq x_{\pi(2)} \geq \dots \geq x_{\pi(n)}$.  Here, $\tilde{\mathfrak{S}}_n$ denotes the sorting permutation group over the set of indices $[n]$. For any permutation $\pi \in \tilde{\mathfrak{S}}_n$, we define 
	\begin{equation}\label{e.Spi}
		S^\pi_{[j_1, j_2]} =
		\begin{cases}
			\{ \pi(j_1), \pi(j_1 + 1), \dots, \pi(j_2) \}, & \text{if } 0 < j_1 \leq j_2 \leq n, \\
			\emptyset, & \text{otherwise.}
		\end{cases}
	\end{equation}
	
	Let $C \subseteq \mathbb{R}^n$ be a closed and convex set; then, $C$ is called\\
	\pt \bfi{type-1 symmetric}, if for any $x \in C$ and any permutation $\pi \in \mathfrak{S}_n$,  $x^\pi \in C$;\\
	\pt \bfi{nonnegative}, if for every $x \in C$, $x_i \geq 0$ for all $i$;\\
	\pt \bfi{type-2 symmetric set}, if it is a type-1 symmetric set and if for any $x \in C$, any $\pi \in \mathfrak{S}_n$, and any $y \in \{-1, 1\}^n$, $x \circ y \in C$, with $(x \circ y)_i = x_i y_i$ for all $i\in[n]$.
	
	Let $C \subseteq \mathbb{R}^n$ be a closed and convex set that is either a nonnegative type-1 symmetric set or a type-2 symmetric set. Let $x \in \mathbb{R}^n$, and $\pi \in \tilde{\mathfrak{S}}(p(x))$, where  the symmetry function $p : \mathbb{R}^n \to \mathbb{R}^n$ is defined as 
	\begin{equation}\label{e.pdef}
		p(x) =
		\begin{cases}
			x, & \text{if } C \text{ is a nonnegative type-1 symmetric set}, \\
			|x|, & \text{if } C \text{ is a type-2 symmetric set}.
		\end{cases}
	\end{equation}
	Here, $|x|$ denotes the component-wise absolute value of $x$ (this function is used to define a common sorting permutation $\pi \in \tilde{\mathfrak{S}}(p(x))$ for both cases).
	
	The above definitions, $p(x)$, $\tilde{\mathfrak{S}}$, and $S^\pi_{[j_1, j_2]}$, will be used in lines 9 and 10 of our new algorithm (Algorithm \ref{a.EPD}, below).

	Symmetric sets of type-1 and type-2 frequently arise as feasible regions in optimization problems. The entire space \(\mathbb{R}^n\) is both a type-1 and type-2 symmetric set. The nonnegative orthant \(\mathbb{R}^n_+\) is a type-1 set and, more specifically, also a nonnegative type-1 set, but not type-2. The unit simplex \(\Delta_n\) shares the same properties as the nonnegative orthant---it is type-1 and nonnegative type-1, but not type-2. The unit sum set \(\Delta'_n\) is only a type-1 set. The \(\ell_p\)-ball \(B^n_p[0, 1]\) (for \(p \geq 1\)) is both a type-1 and type-2 symmetric set. Lastly, the box constraints set \([\ell, u]^n\), with \(\ell < u\), is type-1 but neither nonnegative type-1 nor type-2.

	\subsection{Optimality Conditions}
	
	This section presents an overview of the first-order optimality conditions for smooth optimization problems over closed and convex sets. We begin by reviewing classic stationarity conditions within the framework of convex analysis and then extend the discussion to the sparse optimization problem~\eqref{pr:p-original}, which involves the intersection of a symmetric constraint set $C$ and the nonconvex sparsity set $C_{\s}$. By examining the structure of this composite feasible region, we introduce existing stationarity concepts that are well-suited for nonconvex problems with embedded sparsity constraints. These conditions form the theoretical foundation for the development and analysis of the proposed algorithm. For further details on the optimality conditions, see \cite[Section 3]{suppMat}.
	
	\subsubsection{For Smooth Problems over Convex Sets}
	
	We consider the convexly constrained optimization problem
	\[
	\min \{ f(x) : x \in C \},
	\]
	where $f : \mathbb{R}^n \to \mathbb{R}$ is continuously differentiable and $C \subseteq \mathbb{R}^n$ is a nonempty closed convex set.  
	A vector $x^* \in C$ is called a \bfi{stationarity point} of this problem if
	\[
	g(x^*)^T (x - x^*) \geq 0, \qquad \forall x \in C,
	\]
	where $g(x^*) := \nabla f(x^*)$.  
	
	This variational inequality expresses that there are no feasible descent directions at $x^*$.  
	Equivalently, it can be written as the fixed-point condition
	\[
	x^* = P_C\!\left( x^* - \tfrac{1}{L}\, g(x^*) \right), \qquad \text{for some } L > 0,
	\]
	where $P_C(\cdot)$ denotes the Euclidean projection onto $C$.  See \cite[Remark 2.1]{suppMat} for the explicit stationarity conditions in \cite[Section 3]{suppMat}.

	\subsubsection{For Problem \eqref{pr:p-original}}\label{sec:theorem5.6}

	\bfi{Basic Feasible Points and Optimality:} A vector $x \in C_{\s} \cap C$ is called a \bfi{basic feasible ({\tt BF}) point} of problem \eqref{pr:p-original} if, for any super support set $\mathcal{L}$ of $x$, there exists a scalar $L > 0$ such that
	\begin{equation}
		x_\mathcal{L} = P_{C_\mathcal{L}} \left( x_\mathcal{L} - \frac{1}{L} g_\mathcal{L}(x) \right).
		\label{eq:BF-condition}
	\end{equation}
	If $|I_1(x)| = s$, then the only super support set is the support itself, and hence, the {\tt BF} condition reduces to
	\begin{equation}
		x_{I_1(x)} = P_{C_{I_1(x)}} \left( x_{I_1(x)} - \frac{1}{L} g_{I_1(x)} (x) \right),
		\label{eq:BF-full-support}
	\end{equation}
	which reduces to the standard projection condition, which is necessarily satisfied at {\tt BF} points with full support. However, it is not sufficient when the support is incomplete. The {\tt BF} condition is equivalent to requiring that, for any super support set $\mathcal{L}$ of $x$, the vector $x_\mathcal{L}$ is a stationarity point of the following convex-constrained optimization problem:
	\[
	\min \left\{ f(U_\mathcal{L} w) : w\in C_\mathcal{L} \right\}.
	\]
	Although condition~\eqref{eq:BF-condition} is written using $L$, it is essentially independent of the choice of $L$, and can alternatively be expressed as 
	\[
	g_\mathcal{L}(x)^T (y_\mathcal{L} - x_\mathcal{L}) \geq 0, \qquad \text{for all } y \in C \text{ with } I_1(y) \subseteq \mathcal{L}.
	\]

	Let $x^*$ be an optimal solution of the problem \eqref{pr:p-original}. Then, it has been shown in \cite[Theorem 5.1]{Beck2016} that $x^*$ is a {\tt BF} point of \eqref{pr:p-original}.  
	
	It is important to note that when the support of a vector $x$ is not full, verifying whether $x$ is a {\tt BF} point, in principle, requires checking condition~\eqref{eq:BF-condition} for each possible choice of a super support set. The number of such checks is 
	\[
	\begin{pmatrix}
		n-\|x\|_0 \cr s-\|x\|_0   
	\end{pmatrix}.
	\]
	However, when the set $C$ is either a nonnegative type-1 symmetric set or a type-2 symmetric set, there exist simpler procedures for verifying basic feasibility in the case of incomplete support. Specifically, it is sufficient to verify that condition~\eqref{eq:BF-condition} holds for a particular super support set.

	When $x^*$ has full support (i.e., $\|x^*\|_0 = s$), the {\tt BF} condition reduces to the 
	standard first-order rules for the underlying convex set $C$, restricted to the active 
	support $I_1(x^*)$. In other words, the explicit formulas given in \cite[Remark~2.1]{suppMat} for smooth convex problems remain valid, but they apply only to the indices in $I_1(x^*)$.
	
	By contrast, if $x^*$ has incomplete support ($\|x^*\|_0 < s$), the {\tt BF} condition requires 
	projected-gradient stationarity on every super-support $\mathcal{L}$, i.e., with respect to the convex restriction $C_\mathcal{L}$ rather than the entire set $C$.
	
	As previously mentioned, the concept of {\tt BF} is tied to stationarity with respect to the restriction of the set $C$ to super support sets of the vector. However, it provides no guarantee regarding the optimality of the support itself. In this sense, the {\tt BF} condition is relatively weak. We now introduce a stronger notion known as the {\tt L}-stationarity condition.

	\bfi{$\textit{\textbf{L}}$-Stationarity.} For a constant $L > 0$, a vector $x \in C_{\s} \cap C$ is said to be an {\tt L}-stationarity point of problem \eqref{pr:p-original} if
	\[
	x \in P_{C_{\s} \cap C} \left( x - \frac{1}{L} g(x) \right).
	\]
	This condition implies that $x$ is a fixed point of the projected-gradient step with the step size $1/L$, over the nonconvex feasible set $C_{\s} \cap C$. Let $x^* \in C_{\s} \cap C$ be an {\tt L}-stationarity point of \eqref{pr:p-original}. Then, as shown in \cite[Lemma 5.2]{Beck2016}, $x^*$ is a {\tt BF} point of \eqref{pr:p-original}.

	{\bf Lu--Zhang stationarity Point:} Lu and Zhang \cite{lu2013sparse} defined another optimality condition for the problem \ref{pr:p-original}. The vector $x\in\Rz^n$ is called a Lu--Zhang stationarity point if and only if there exists an index set $\mathcal{L} \subseteq [n]$ with $|\mathcal{L}| = s$ such that
	\begin{equation}\label{e.LZPs}
		\begin{cases}
			\mbox{$g_i(x) =0$}, & \mbox{for all $i\in \mathcal{L}\subseteq [n]$ with $\vert \mathcal{L} \vert =s$,}  \cr
			\mbox{$x_i=0$}, & \mbox{for all $i\in \mathcal{L}^c$,}
		\end{cases}
	\end{equation}  
	where 
	\begin{equation}\label{Lcom}
		\mathcal{L}^c:= [n]\setminus \mathcal{L}.
	\end{equation}
	They also showed that when $C = \mathbb{R}^n$, any optimal solution of problem~\eqref{pr:p-original} is a Lu--Zhang stationarity point for the same problem. Moreover, when $C=\mathbb{R}^n$, any {\tt BF} point of problem~\eqref{pr:p-original} is also a Lu--Zhang stationarity point; however, for general closed convex $C$ this implication may fail---see \cite[Example~2.1]{Lapucci2020}.

Let $S \subseteq \mathbb{R}^n$ be a closed set and $\bar x \in S$. To rigorously characterize the stationarity of problem~\eqref{pr:p-original}, we recall the standard normal cones from variational analysis:
\begin{itemize}
    \item The \textbf{Fr\'echet (regular) normal cone} is defined as:
    \[
    N^F_S(\bar{x}) := \left\{ \gamma \in \mathbb{R}^n \mid \limsup_{x \xrightarrow{S} \bar{x}, x \neq \bar{x}} \frac{\langle \gamma, x - \bar{x} \rangle}{\|x - \bar{x}\|} \le 0 \right\}.
    \]
    \item The \textbf{Mordukhovich (limiting) normal cone} is defined via the limiting process:
    \[
    N_S(\bar{x}) := \left\{ \gamma \in \mathbb{R}^n \mid \exists x^k \xrightarrow{S} \bar{x}, \gamma^k \to \gamma \text{ s.t. } \gamma^k \in N^F_S(x^k) \text{ for all } k \right\}.
    \]
    \item The \textbf{Clarke normal cone} is the closed convex hull of the limiting normal cone:
    \[
    K_S(\bar{x}) := \text{cl}(\text{conv } N_S(\bar{x})).
    \]
\end{itemize}

In particular, when $S=C$ is the convex constraint set appearing in
\eqref{pr:p-original}, we write $N_C^F(\bar x)$, $N_C(\bar x)$, and $K_C(\bar x)$
for the corresponding Fr\'echet, Mordukhovich, and Clarke normal cones.

\begin{remark}[Normal cones for the sparsity set]
For the cardinality set $C_s=\{x\in\mathbb{R}^n:\|x\|_0\le s\}$, all three normal cones
coincide. The equalities
\[
N^F_{C_s}(\bar x)
= N_{C_s}(\bar x)
= K_{C_s}(\bar x)
= \{\gamma\in\mathbb{R}^n : \gamma_{I_1(\bar x)}=0\}
\]
hold for every $\bar x\in C_s$.  
In particular, if $\|\bar x\|_0<s$, then $N^F_{C_s}(\bar x)=\{0\}$.  
This identity, established in \cite{kanzow2021,Lapucci2020}, ensures that the 
stationarity concepts {\tt CC-AM} and {\tt CC-M}
for problem~\eqref{pr:p-original} reduce to the simple form used throughout our analysis.
\end{remark}

\begin{remark}[Normal cones for convex sets $C$]
For the convex constraint set $C$, the situation is simpler:
since $C$ is closed and convex, the Fr\'echet and Mordukhovich normal cones coincide,
\[
N^F_C(\bar x)=N_C(\bar x), \qquad \forall\,\bar x\in C.
\]
However, equality with the Clarke (cone-continuity) cone,
$K_C(\bar x)$, is \emph{not} automatic.  
It requires a mild regularity condition such as polyhedrality or the
{\tt CC-CPLD} property \cite{kanzow2021}.  
In our analysis, this is the only point where such regularity is invoked:
whereas $C_s$ always satisfies $N^F_{C_s}=N_{C_s}=K_{C_s}$,
for the convex set $C$ we assume polyhedrality or {\tt CC-CPLD}
whenever we need $N^F_C=N_C=K_C$.
\end{remark}

	\paragraph{\bf {\tt CC-AM} Stationarity for \eqref{pr:p-original}:}
	A feasible point $\bar x \in C \cap C_{\s}$ is {\tt CC-AM} if there exist sequences $\{x^k\}\subset C$, $\{u^k\}\subset \Rz^n$, and $\{\gamma^k\}\subset \Rz^n$ such that
	\[
	x^k \to \bar x,\qquad u^k \in N_C^F(x^k)\ \ \forall k,\qquad 
	g(x^k)+u^k+\gamma^k \to 0,
	\]
	and $\gamma^k \in N_{C_{\s}}^F(x^k)$ for all $k$. 
	Here, $N^F$ denotes the Fr\'echet normal cone. 
	For convex $C$, $N_C^F=N_C$, and for the sparsity set $C_s$, we have $N_{C_s}^F=N_{C_s}=K_{C_s}$.
	
	\paragraph{\bf {\tt CC-M} Stationarity for \eqref{pr:p-original}:}
	A feasible point $\bar x \in C \cap C_{\s}$ is called {\tt CC-M} ({\tt M}-stationarity) if there exist $u \in N_C(\bar x)$ and $\gamma \in N_{C_{\s}}(\bar x)$ such that 
	\[
	g(\bar x) + u + \gamma = 0.
	\]
	Equivalently,
	\[
	0 \in g(\bar x) + N_C(\bar x) + N_{C_{\s}}(\bar x),
	\]
	where $N$ denotes the Mordukhovich (limiting) normal cone.
	
\begin{remark}[Cone-continuity for symmetric convex sets]
Beyond the polyhedral and {\tt CC-CPLD} cases treated in \cite{kanzow2021},
many constraint sets of practical importance
(such as permutation-invariant or signed-symmetric convex bodies)
are \emph{type-1} or \emph{type-2} symmetric, i.e., invariant under the action
of a finite group of linear isometries. In \cite{kanzow2021}, a cone-continuity-type regularity condition,
called {\tt CC-AM-regularity}, is introduced to relate
{\tt CC-AM} and {\tt CC-M} stationarity.
Abstracting this idea to a purely geometric setting,
we say that a closed, convex set $C$ satisfies the
{\bf cone-continuity property} ({\tt CCP}) if the Fr\'echet normal cone mapping
$x \mapsto N_C^F(x)$ is outer semicontinuous on each support face of~$C$. As shown in Lemma~\ref{lem:CCP-symmetric}, every closed, convex symmetric set
automatically satisfies {\tt CCP}.
Thus, symmetric convex sets fit naturally into the cone-continuity framework
underlying the regularity assumptions of \cite{kanzow2021}, and their normal cone
behavior is fully compatible with the stationarity analysis used in this work.
\end{remark}

Before proceeding, we establish a structural regularity property of symmetric convex sets that plays a key role in our stationarity analysis. Recall that a cone-continuity-type regularity condition is needed to relate {\tt CC-AM} and {\tt CC-M} stationarity, and that this condition is automatically satisfied under {\tt CC-CPLD} 
\cite[Cor.~4.10(a)]{kanzow2021}.
The next result shows that this cone---{\tt CCP} also holds under a different and particularly relevant geometric assumption---symmetry of the feasible region. Its proof relies on the equivariance of normal cones under linear isometries and is deferred to \cite[Section 4]{suppMat}.

\begin{lemma}[{\tt CCP} for Symmetric Convex Sets]
	\label{lem:CCP-symmetric}
	Let $C \subseteq \mathbb{R}^n$ be a closed, convex, type-1 symmetric set, or a type-2 symmetric set.
	Then the Fr\'echet normal cone mapping $x \mapsto N_C^F(x)$ is
	outer semicontinuous on every face of $C$ determined by a fixed support
	pattern. In particular, $C$ satisfies {\tt CCP} in the sense of \cite{kanzow2021}.
\end{lemma}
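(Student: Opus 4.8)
The plan is to exploit two facts: that convexity collapses the Fr\'echet normal cone onto the ordinary convex normal cone, and that the symmetry group acts by linear isometries preserving $C$, so that normal cones transform equivariantly. Write $G$ for the symmetry group of $C$: in the type-1 case $G = \mathfrak{S}_n$, represented by permutation matrices, and in the type-2 case $G$ is the hyperoctahedral group of signed permutations, represented by permutation matrices composed with diagonal $\pm 1$ matrices. Every $Q \in G$ is orthogonal with $QC = C$. Since $C$ is closed and convex, for each $x \in C$ we have $N_C^F(x) = N_C(x) = \{v : \langle v, y - x \rangle \le 0 \text{ for all } y \in C\}$ (this is exactly the reduction recorded in the second remark above), so throughout it suffices to work with the convex normal cone $N_C$. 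A support face is $F_J := \{x \in C : I_1(x) = J\}$ for fixed $J \subseteq [n]$ (together with a fixed sign pattern in the type-2 case), and \textbf{CCP} requires that $x \mapsto N_C(x)$ be outer semicontinuous relative to each $F_J$.

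First I would establish equivariance: for $Q \in G$ and $x \in C$, $Q\,N_C(x) = N_C(Qx)$. This follows directly from the variational characterization, since $v \in N_C(x)$ iff $\langle v, y - x\rangle \le 0$ for all $y \in C$; substituting $y = Q^{-1}y'$ (legitimate because $QC = C$) and using orthogonality gives $\langle v, Q^{-1}y' - x\rangle = \langle Qv, y' - Qx\rangle$, so $Qv \in N_C(Qx)$, and equality follows by applying the same argument to $Q^{-1}$. I would also record that $Q$ maps support faces to support faces, with $F_J \mapsto F_{\pi(J)}$ where $\pi$ is the permutation part of $Q$, and that $G$ acts transitively on supports of any fixed cardinality.

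The engine for outer semicontinuity is the closed-graph property of the convex normal cone: if $x^k \to \bar x$ with $x^k \in C$ and $v^k \to \bar v$ with $v^k \in N_C(x^k)$, then passing to the limit in $\langle v^k, y - x^k\rangle \le 0$ yields $\langle \bar v, y - \bar x\rangle \le 0$ for all $y \in C$, i.e.\ $\bar v \in N_C(\bar x)$. This already gives $\limsup_{x \to \bar x} N_C(x) \subseteq N_C(\bar x)$, hence outer semicontinuity relative to any subset of $C$, in particular relative to each support face. Symmetry then organizes the statement: by the equivariance and transitivity just noted, the behavior on an arbitrary face $F_J$ is the $Q$-image of that on a single canonical face $F_{J_0}$ with $J_0 = \{1, \dots, |J|\}$, so it suffices to verify the property on canonical faces and it propagates uniformly to all faces of the same support cardinality. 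This is exactly outer semicontinuity of the Fr\'echet normal cone mapping on each support face, i.e.\ \textbf{CCP} in the sense of Kanzow et al.

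The main obstacle is conceptual rather than computational: pinning down the correct notion of support face and handling the support-drop regime, where a sequence $x^k$ with support $J$ converges to $\bar x$ with strictly smaller support $\bar J \subsetneq J$. There the osc inclusion must be checked across faces of different cardinality, and one must verify that no spurious limiting normals appear---this is precisely where convexity (through the closed graph) does the real work, while symmetry guarantees that the face decomposition is compatible and group-invariant. A secondary technical point is the sign-flip bookkeeping in the type-2 case, where the diagonal $\pm 1$ factors must be tracked through both the equivariance relation and the support-pattern correspondence; this is routine but must be handled carefully so that every face of a given support cardinality is genuinely covered.
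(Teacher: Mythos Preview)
Your proposal is correct and tracks the paper's stated approach (the paper says its proof ``relies on the equivariance of normal cones under linear isometries,'' and you develop precisely that: $Q\,N_C(x)=N_C(Qx)$ for $Q$ in the permutation or signed-permutation group, together with transitivity on supports of fixed cardinality).

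One point worth making explicit, since you essentially discover it yourself: your closed-graph argument already does all of the work. Passing to the limit in $\langle v^k,\,y-x^k\rangle\le 0$ shows that $N_C=N_C^F$ is outer semicontinuous on \emph{all} of $C$ for any closed convex set, hence a fortiori on every support face; so {\tt CCP} as the paper defines it holds for arbitrary closed convex $C$, symmetric or not. The equivariance machinery you build is thus redundant for the bare osc claim---its role is organizational (uniform transport across the $G$-orbit of faces) and to connect back to the inequality-system formulation of CC-AM-regularity in Kanzow et al., where the constraint set is not given abstractly. Your handling of the support-drop regime and the type-2 sign bookkeeping is appropriately careful, and your identification of convexity (rather than symmetry) as the real engine is the sharper observation.
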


\paragraph{\bf {\tt CC-AM} Regularity and Intersection Calculus.}
For the sparsity set $C_s$, the Fr\'echet normal cone depends only on the support pattern; it is locally constant and satisfies $N_{C_s}^F(\bar x) = N_{C_s}(\bar x) = K_{C_s}(\bar x)$. For the convex set $C$, we always have $N_C^F(\bar x) = N_C(\bar x)$. In the present work, the sets $C$ under consideration are closed, convex, and symmetric (type-1 or type-2).  All symmetric convex sets considered in this work have nonempty interior; hence,
a strong regularity condition (e.g., \textbf{complementarity-constrained Mangasarian--Fromovitz condition (CC-MFCQ)}  in the sense of \cite{kanzow2021})
holds, which is stronger than {\tt CC-CPLD} and ensures that {\tt CCP} holds automatically.

Consequently, for these symmetric sets, we have $N_C^F(\bar x) = N_C(\bar x) = K_C(\bar x)$. Moreover, the existence of an interior point for $C$ ensures that the normal cone to the intersection $\Omega = C \cap C_s$ satisfies the basic calculus sum rule:
\[
N_\Omega(\bar x) \subseteq N_C(\bar x) + N_{C_s}(\bar x).
\]
This provides the mathematical rigour for the stationarity concepts utilized in our analysis, as the Fr\'echet, Mordukhovich, and Clarke normal cones behave consistently under these conditions.

\paragraph{\bf {\tt CC-S} Stationarity for \eqref{pr:p-original}:}
	A feasible point $\bar x \in C \cap C_{\s}$ is called {\tt CC-S} (strongly stationarity) if there exist
	$u \in N_C(\bar x)$ and $\gamma \in N_{C_{\s}}(\bar x)$ such that:
	\begin{enumerate}
		\item [(i)] Exact first-order condition: $g(\bar x) + u + \gamma = 0$;
		\item [(ii)] Activity rule: If $\|\bar x\|_0 < s$, then  $N_{C_s}(\bar x)=\{0\}$ and hence $\gamma=0$. Otherwise, $\|\bar x\|_0 = s$, then $\gamma_{I_1(\bar x)}=0$ and $\gamma_{I_0(\bar x)}$ is unrestricted; i.e.,  $N_{C_s}(\bar x)=\{\gamma\in\Rz^n:\gamma_{I_1(\bar x)}=0\}$.
	\end{enumerate}
	Equivalently, $0 \in g(\bar x) + N_C(\bar x) + N_{C_{\s}}(\bar x)$ with $N_{C_{\s}}(\bar x)=\{0\}$ if $\|\bar x\|_0<s$ and 
	$N_{C_{\s}}(\bar x)=\{\gamma : \gamma_{I_1(\bar x)}=0\}$ if $\|\bar x\|_0=s$.

	\paragraph{{\bf {\tt AW}-Stationarity.}}
	Ribeiro et al.~\cite{Ribeiro2022} proposed \bfi{approximate weak stationarity} ({\tt AW}-stationarity) for the $(x,y)$-reformulation of cardinality-constrained problems, where the sparsity constraint is modeled via orthogonality conditions $x\circ y=0$ and simple bounds on $y$. {\tt AW}-stationarity is a sequential ({\tt AKKT}-type) necessary condition that holds for all local minimizers without any constraint qualification. In our sparse symmetric setting, the $x$-space implications of {\tt AW} align with {\tt CC-AM}; under {\tt CCP}, this further implies {\tt CC-M}. Thus, unlike {\tt BF} or Lu–Zhang stationarity, {\tt AW} and {\tt CC-AM} stationarities provide necessary conditions for local optimality without requiring constraint qualifications.

We next show that every local minimizer of \eqref{pr:p-original} is {\tt CC-AM}-stationarity. 
Our proof (see \cite[Section 5]{suppMat}) follows the same structure as the argument in 
\cite[Theorem~3.2]{kanzow2021}, but with an important distinction: although any closed, convex set 
$C$ can in principle be written via (possibly many) inequality constraints, our analysis does not rely 
on such an explicit representation.  Instead, we exploit the geometric structure of $C$ directly.  
In particular, since we work over a closed, convex, symmetric feasible set rather than the full space 
$\mathbb{R}^n$, the verification of the {\tt CC-AM} conditions becomes simpler than in the general 
constraint-system setting of \cite{kanzow2021}.

Moreover, {\tt CCP} is guaranteed under the 
{\tt CC-CPLD} condition---the complementarity-constrained extension of the 
classical {\tt CPLD}---as established in \cite[Cor.~4.10(a)]{kanzow2021}.  
Whenever {\tt CC-CPLD} holds, the implication
\[
\text{{\tt CC-AM}} \;\Rightarrow\; \text{{\tt CC-M}}
\]
follows immediately.  
In addition, Lemma~\ref{lem:CCP-symmetric} shows that {\tt CCP} also holds for every 
closed, convex symmetric set (type-1 or type-2), thereby enlarging the class of feasible 
regions for which {\tt CC-AM}-type cone-continuity regularity---and hence the above implication---is automatically satisfied.  
The connection between {\tt BF} points and {\tt CC}-stationarity is detailed in \cite[Section 6]{suppMat}.

	\begin{theorem}[Local minimizers are {\tt CC-AM} (hence {\tt CC-M})]
		\label{thm:locmin-CCAM}
		Let $C\subseteq\Rz^n$ be closed, convex, and symmetric (either nonnegative type-1 or type-2), 
		and let $\Omega:=C\cap C_{\s}$. If $\wh x\in\Omega$ is a local minimizer of $f$ over $\Omega$, then $\wh x$ is {\tt CC-AM} for~\eqref{pr:p-original}. 
		If {\tt CCP} ({\tt AM}-regularity) holds at $\wh x$ 
		(e.g., for polyhedral $C$ or under {\tt CC-CPLD}), then $\wh x$ is in particular {\tt CC-M}.

	\end{theorem}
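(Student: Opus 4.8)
The plan is to prove the two assertions in sequence: first the qualification-free sequential condition {\tt CC-AM}, and then its upgrade to {\tt CC-M} under {\tt CCP}, which Lemma~\ref{lem:CCP-symmetric} supplies automatically for symmetric $C$.

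First I would invoke the basic first-order necessary condition of variational analysis: since $\wh x$ is a local minimizer of the smooth function $f$ over the closed set $\Omega=C\cap C_{\s}$, we have $-g(\wh x)\in N_\Omega(\wh x)$, with no constraint qualification required. By the definition of the Mordukhovich cone as the outer limit of Fr\'echet cones, this membership furnishes a feasible sequence $x^k\to\wh x$, $x^k\in\Omega$, together with $\xi^k\in N^F_\Omega(x^k)$ satisfying $\xi^k\to-g(\wh x)$; continuity of $g$ then gives $g(x^k)+\xi^k\to 0$. The content of {\tt CC-AM} is obtained by splitting each $\xi^k$ across the two sets, i.e.\ by writing $\xi^k=u^k+\gamma^k$ with $u^k\in N^F_C(x^k)$ and $\gamma^k\in N^F_{C_{\s}}(x^k)$ via a Fr\'echet sum rule $N^F_\Omega(x^k)\subseteq N^F_C(x^k)+N^F_{C_{\s}}(x^k)$ applied at each $x^k$. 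Here the geometric structure of $C$ enters decisively: choosing the approximating points to have full support makes $C_{\s}$ coincide locally with the coordinate subspace $\Rz^{I_1(x^k)}$, which is polyhedral, so the intersection calculus for a convex set and a subspace applies. An equivalent route---the one that parallels \cite[Theorem~3.2]{kanzow2021}---is to pass to the $(x,y)$ reformulation with orthogonality constraints $x\circ y=0$ and bounds on $y$, use that every local minimizer of a smooth problem over a closed set is {\tt AW}/{\tt CC-AM}-stationary without any constraint qualification, and then project the sequential condition back onto $x$-space; the distinction from \cite{kanzow2021} is that $C$ is handled through its geometry rather than through an explicit inequality description.

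To upgrade {\tt CC-AM} to {\tt CC-M}, I would pass to the limit in $g(x^k)+u^k+\gamma^k\to 0$. Outer semicontinuity of the map $x\mapsto N^F_C(x)$---i.e.\ {\tt CCP}, which holds for symmetric $C$ by Lemma~\ref{lem:CCP-symmetric} and also follows from {\tt CC-CPLD} via \cite[Cor.~4.10(a)]{kanzow2021}---guarantees that, after extracting convergent subsequences $u^k\to u$ and $\gamma^k\to\gamma$, the limits satisfy $u\in N_C(\wh x)$ and $\gamma\in N_{C_{\s}}(\wh x)$. Since $g$ is continuous, the relation passes to $g(\wh x)+u+\gamma=0$, that is $0\in g(\wh x)+N_C(\wh x)+N_{C_{\s}}(\wh x)$, which is precisely {\tt CC-M}.

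The step I expect to be the main obstacle is the clean separation of $\xi^k$ into its $C$- and $C_{\s}$-components together with the control needed to take limits---namely the per-point Fr\'echet sum rule and the boundedness and extraction of $\{u^k\}$ and $\{\gamma^k\}$. For non-polyhedral symmetric sets (e.g.\ $\ell_p$-balls) the naive convex intersection qualification can fail---for instance $\mathrm{int}(C)\cap\Rz^{T}=\emptyset$ already for the nonnegative orthant when $s<n$---so the argument cannot rely on a single generic CQ. Instead it must exploit either polyhedrality (orthant, box, simplex) or the interior point at the origin (type-2 sets), and ultimately {\tt CCP} to perform the limit extraction. This is exactly the regularity certified by Lemma~\ref{lem:CCP-symmetric} for the symmetric sets under consideration, which is why {\tt CC-M} is available here without any additional assumption beyond symmetry.
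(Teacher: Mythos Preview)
Your proposal is essentially correct and matches the paper's own argument, which (per the supplementary description) likewise proceeds via Fr\'echet normal-cone calculus in the spirit of \cite[Theorem~3.2]{kanzow2021}, exploiting the geometry of the symmetric set $C$ directly rather than an explicit inequality representation, and then invokes {\tt CCP} (Lemma~\ref{lem:CCP-symmetric}) for the upgrade to {\tt CC-M}. One simplification worth noting: since local minimality already yields $-g(\wh x)\in N^F_\Omega(\wh x)$, the detour through the limiting cone and the sequence $\xi^k$ is unnecessary---you can take $x^k\equiv\wh x$ and split directly at $\wh x$, which is precisely where the sum-rule case analysis (full versus incomplete support, polyhedral versus interior-point qualification) that you correctly flag as the main obstacle takes place.
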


\section{An Improved Quasi-Newton Penalty Decomposition Method}\label{main}
	
	In this section, we discuss the algorithmic features of an improved penalty decomposition method, with a focus on its quasi-Newton aspects. In other words, we show how the approximate Hessian of the cost function can be utilized in the penalty decomposition algorithm, resulting in higher accuracy due to the use of second-order model information.  
	
	\subsection{Main Algorithmic Aspects of the New Approach}
	
As already mentioned, the model \eqref{pr:p-original} is generally NP-hard due to the existence of the cardinality constraint, even when the cost function is quadratic. Consequently, our strategy focuses on handling this intractable constraint as effectively as possible. Notably, our analysis shows that we do not need to decouple the feasibility set from the cardinality constraint when the feasible region is a \emph{symmetric set}. More specifically, for any given point, the problem of finding the closest $s$-sparse point lying in a symmetric set admits an explicit or low-cost projection rule.

	Recall from \eqref{e.bc-s} that
	\[
	C \cap C_{\s} = \{y\in \mathbb{R}^n \mid y\in C,~\|y\|_0\le s\}.
	\]
	Our algorithm is based on the following reformulation of \eqref{pr:p-original}:
	\begin{equation*}
		\min_{x\in \mathbb{R}^n,~y\in  C \cap C_{\s}} f(x)\quad\text{s.t.}\quad x-y=0,
	\end{equation*}
	which can be tackled via a sequence of penalty subproblems as follows:
	\begin{equation*}
		\min_{x\in \mathbb{R}^n,~y \in  C \cap C_{\s}}
		q_{\rho}(x,y):=f(x)+\rho \|x-y\|^2.
	\end{equation*}
	However, due to the (possible) nonconvexity of $f$, the above penalty subproblem is still expensive to handle. This fact motivates us to suggest the following approximate penalty subproblem:
	\begin{equation}\label{pr: pxy-subproblem} \tag{\mbox{$P_{(x,y;\rho)}$}}
		\min_{x\in \mathbb{R}^n,~y \in  C \cap C_{\s}} {\Phi}_{(\rho,z)}(x,y),
	\end{equation}
	where the model function is
	\begin{equation}\label{e.modelfunc} 
		\Phi_{(\rho, z)}(x, y) := (x - z)^T g(z) + \frac{1}{2} (x - z)^T \mathbf{H} (x - z) + \frac{1}{2} \rho \|x - y\|^2,
	\end{equation}
	in which $\mathbf{H}$ is an approximation of the Hessian $\nabla^2f(z)$. Since
	\[
	f(x) \approx f(z)+ (x-z)^Tg(z)+\frac{1}{2}(x-z)^T\mathbf{H}(x-z),
	\]
	it follows that
	\[
	f(z)+{\Phi}_{(\rho,z)}(x,y) \approx f(x)+\frac{1}{2}\rho \|x-y\|^2.
	\]
	In our analysis, $\mathbf{H}$ is taken as a diagonal approximation of the Hessian of the nonconvex function $f$ at $z$. This sparse approximation is a notable advantage of our approach, since computing the full (often dense) Hessian is prohibitively expensive.

Although \eqref{pr: pxy-subproblem} is still NP-hard due to the nonconvex cardinality constraint in $y$, its internal structure lends itself naturally to a block-coordinate treatment, which separates the model into an $x$-update and a $y$-update. This decomposition reveals two subproblems with particularly convenient forms: the $x$-subproblem admits a closed-form minimizer due to the quadratic structure of the model, while the $y$-subproblem simplifies to a sparse projection onto $C \cap C_s$. These two building blocks form the core of our method and are developed in the following subsections.

\subsubsection{Closed-Form Solution of $(P_{(x,y;\rho)})$ with Respect to $x$}\label{sec:solvex}

Let $j$ denote the iteration counter of the outer loop, and let $\ell$ denote the iteration counter of the inner loop. At the $j$th iteration of {\tt PD-QN}, the $x$-subproblem can be written as 
	\begin{equation}\label{pr:px-subproblem} \tag{\mbox{$P_x$}}
		\min_{x\in \mathbb{R}^n}\
		{\Phi}_{\left(\rho^{(j-1)},x^{(j-1)}\right)}\left(x,y_{\ell-1}^{(j-1)}\right),
	\end{equation}
	where the model objective function ${\Phi}_{\left(\rho^{(j-1)},x^{(j-1)}\right)}\left(x,y_{\ell-1}^{(j-1)}\right)$ can be computed by setting $\rho=\rho^{(j-1)}$, $z=x^{(j-1)}$, and $y=y_{\ell-1}^{(j-1)}$  in \gzit{e.modelfunc}.
	
Since $\rho^{(j-1)} \ge \rho_{\min}$ and the diagonal Hessian approximation
$\mathbf{H}^{(j-1)}$ is safeguarded to remain positive definite, we have 
$\mathbf{H}^{(j-1)}+\rho^{(j-1)} I \succ 0$, ensuring a unique minimizer. Here, $\rho_{\min}$ is a tuning parameter as a lower bound for $\rho^{{j-1}}$. The first-order optimality condition for \gzit{pr:px-subproblem} is
\begin{equation}\label{e.optPx}
	g\left(x^{(j-1)}\right)
	- \mathbf{H}^{(j-1)}x^{(j-1)}
	+ \mathbf{H}^{(j-1)}x
	+ \rho^{(j-1)} \left(x-y^{(j-1)}_{\ell-1}\right) = 0.
	\end{equation}
	Thus, the closed-form solution of \gzit{pr:px-subproblem} is
	\begin{equation} \label{sol: px}
		x^{(j-1)}_{\ell} = \left(\mathbf{H}^{(j-1)}+ \rho^{(j-1)} I\right)^{-1} 
		\left( \mathbf{H}^{(j-1)}x^{(j-1)} + \rho^{(j-1)} y^{(j-1)}_{\ell-1} - g\left(x^{(j-1)}\right)  \right).
	\end{equation}
	
	\subsubsection{Solution of $(P_{(x,y;\rho)})$ with Respect to $y$}\label{sec:solvey}
	
	With $x = x^{(j-1)}_{\ell}$ fixed, the $y$-subproblem reduces to
	\begin{equation}
		\label{pr:py-subproblem} \tag{\mbox{$P_y$}}
		\min_{ y \in C \cap C_s,\ I_1(y)\subseteq\mathcal{L}_\ell } \ \| x^{(j-1)}_\ell - y \|^2,
	\end{equation}
	where $\mathcal{L}_\ell$ is the candidate support (chosen in line~10 of Algorithm~\ref{a.EPD}, below).  This amounts to projecting $x^{(j-1)}_{\ell}$ onto the restricted feasible set $C \cap C_s$ with support contained in $\mathcal{L}_\ell$. 
	
	Depending on the structure of $C$, this projection admits either an explicit formula
	(e.g., for $\mathbb{R}^n$, $\mathbb{R}^n_+$, the simplex, or $\ell_p$-balls with $p\in\{1,2,\infty\}$)
	or can be computed by a simple dedicated routine (e.g., a one-dimensional root search for general $\ell_p$ with $p\ge1$, or under box constraints). 
	For completeness, Section 8 in \cite[Algorithms 3-6]{suppMat} provides the corresponding procedures. In all cases, the cost is low, and the update is efficient because the projection is restricted to at most $s$ coordinates.

\subsection{New Algorithm}\label{sec:new_algorithm}

We here describe the main structural elements of our proposed quasi-Newton penalty decomposition
algorithm (Algorithm \ref{a.EPD}, below), called {\tt PD-QN}. The method follows the classical penalty decomposition framework, but incorporates several
key enhancements that improve its practical performance and theoretical properties. In particular, the inner
loop is strengthened by a support-selection mechanism inspired by \cite[Algorithm~5]{Beck2016}, and by
two safeguards that {\bf control model descent} and {\bf primal-dual agreement}. Together, these components drive the 
algorithm toward {\tt BF} points of $f$ while maintaining stability under the symmetry of~$C$. 
We begin by outlining the role and effect of each modification:\\
(i) \textbf{Support selection in the inner loop.}
	Lines~9 and~10 of the inner loop of {\tt PD-QN} restrict the second subproblem by imposing 
	$I_1(y)\subseteq\mathcal{L}_\ell$, where $\mathcal{L}_\ell$ is a super-support obtained from the current
	sparse iterate $y^{(j-1)}_{\ell-1}$ through sorting the vector $-p(-\nabla_x \Phi)$.  
	If the existing support has size~$s$, the method may still adjust the support: indices in the current 
	support can be replaced by new ones corresponding to larger components of the gradient map.  
	If the support is smaller than~$s$, the method enlarges it by selecting indices from $I_0(y^{(j-1)}_{\ell-1})$
	with the largest entries of
	\[
	p\!\left(-\nabla_x {\Phi}_{(\rho^{(j-1)},x^{(j-1)})}\!\left(x^{(j-1)}_{\ell},\,y^{(j-1)}_{\ell-1}\right)\right).
	\]
	This procedure continues until full support is obtained or no further decrease in the model function is detected.\\
	(ii) \textbf{Diagonal Hessian approximation.}
	The Hessian approximation $\mathbf{H}$ is taken diagonal, which keeps the closed-form update
	\eqref{sol: px} computationally low-cost (see \cite[Section 7]{suppMat} for how $\mathbf{H}$ can be computed). This choice reduces both storage and inversion costs and is essential for scalability: a dense Hessian would incur $O(n^3)$ operations per update, which is prohibitive for large-scale problems.\\
	(iii) \textbf{Hardness of the joint subproblem.}
	Although $\Phi_{(\rho,z)}$ is quadratic in both variables, the constraint 
	$y \in C \cap C_s$ renders problem~\eqref{pr: pxy-subproblem} NP-hard.  
	The block-coordinate decomposition used in Algorithm~\ref{a.EPD} circumvents this difficulty by splitting variables and exploiting the fact that both subproblems $(P_x)$ and $(P_y)$ admit closed-form or inexpensive solutions.\\
	(iv) \textbf{Structure of the $y$-update.}
	The projection defining the $y$-update involves only the $s$ indices in the selected support.
	Hence, although the ambient dimension may be very large, the projection step effectively operates in a space
	of dimension~$s$.  
	This property is a major contributor to the scalability of {\tt PD-QN}.\\
	(v) \textbf{$\Upsilon$-based restart safeguard.}
	The condition
	\[
	\min_{x} \Phi_{(\rho^{(j)},x^{(j)})}(x,y^{(j)}) \le \Upsilon^{(j-1)}
	\]
	enforces a non-increasing control sequence~$\{\Upsilon^{(j)}\}$ and prevents undesirable model increases.
	If violated, the algorithm restarts with $y_0^{(j)} := y_0^{(0)}$.
	This restart does not attempt to randomize the initial point; instead, it restores the descent guarantee of the model and ensures stability of the outer iteration.\\
	(vi) \textbf{Effect of the $\Upsilon$-restart.}
	The restart does not modify the subproblems themselves, but only resets the initialization of~$y$ at the next
	outer step.  
	Since {\tt PD-QN} is deterministic, repeated restarts from the same initial $y_0^{(0)}$ simply enforce the descent safeguard and are consistent with the convergence analysis.\\
	(vii) \textbf{Primal-dual agreement safeguard.}
	The mechanism
	\begin{equation}\label{eq:primalDual}
	\|x^{(j)} - y^{(j)}\| 
	\le 
	\tau \|x^{(j-1)} - y^{(j-1)}\| + \eta_j,
	\qquad \tau\in(0,1),\ \eta_j\downarrow 0,
 \end{equation}
	enforces a contraction of the primal-dual gap.  
	If violated, a restart is triggered (again, resetting $y_0^{(j)} := y_0^{(0)}$), and this condition is instrumental
	in establishing $\|x^{(j)} - y^{(j)}\| \to 0$ in Theorem~\ref{thm:outer-original-CCM}, below.\\
	(viii) \textbf{Effect of the agreement safeguard.}
	This safeguard complements the approximate stationarity condition and ensures that the primal variable
	$x^{(j)}$ remains consistent with the sparse projection $y^{(j)}$.  
	It supports the proof of {\tt CC-AM} for limit points, and, under {\tt AM}-regularity assumptions, {\tt CC-M} optimality.

In finite precision arithmetic, at iteration $j$ of {\tt PD-QN}, we regard 
$x^{(j)} := x^{(j-1)}_{\ell}$ as an approximate stationarity point of the $x$-subproblem of the penalty model if
\begin{equation}\label{e.gnorm}
	\left\|\nabla_x \Phi_{(\rho^{(j-1)},x^{(j-1)})}\!\left(x^{(j)},y^{(j)}\right)\right\|
	\le \varepsilon_{j-1}.
\end{equation}
The outer iterate $(x^{(j)},y^{(j)})$ is defined as the last inner-loop pair
$(x^{(j-1)}_{\ell},y^{(j-1)}_{\ell})$ computed for the model 
$\Phi_{(\rho^{(j-1)},x^{(j-1)})}$.
In what follows, we describe how the subproblems in $x$ and $y$ (lines~8 and~11 of {\tt PD-QN})
are solved.  
Both updates admit explicit or near-closed-form solutions:  
$x$ has a closed-form minimizer, and $y$ is obtained through a sparse projection, 
with explicit formulas in common cases (full space, orthant, simplex, $\ell_1$, $\ell_2$, and $\ell_\infty$)
and simple 1-D routines in the remaining ones.

\begin{remark}[Support adjustment and restart mechanism]\label{rem:support-restart}
Two aspects of Algorithm~\ref{a.EPD} deserve clarification. First, regarding the \emph{support-selection rule} in the inner loop, the support is not frozen once it reaches cardinality~$s$. Even when $|I_1(y^{(j-1)}_{\ell-1})|=s$, the algorithm may replace indices in the current support by new ones corresponding to larger components of the gradient map
$p(-\nabla_x \Phi)$. Thus, the support set $\mathcal L_\ell$ continues to evolve so as to track the most significant coordinates. The case $|I_1(y^{(j-1)}_{\ell-1})|<s$ mainly arises during initialization and is included for completeness. Second, the \emph{restart mechanism} is deterministic and should not be interpreted as a randomization strategy. When either the $\Upsilon$-based descent safeguard or the primal-dual agreement condition is violated, the algorithm resets the initialization of the next outer iteration by setting $y_0^{(j)}:=y_0^{(0)}$ and restarting the inner loop with $\ell=0$. Although the restart uses the same initial point, the penalty parameter $\rho^{(j)}$ and the Hessian approximation $\mathbf H^{(j)}$ have changed, and hence the underlying model function $\Phi_{(\rho^{(j)},x^{(j)})}$ is different. The restart therefore acts as a descent safeguard for the evolving penalty model and is essential for the convergence analysis in Theorem~\ref{thm:outer-original-CCM}, below.
\end{remark}

\begin{algorithm}[H]
		\caption{A Quasi-Newton Penalty Decomposition Algorithm ({\tt PD-QN}) for Solving \eqref{pr:p-original}}
		\begin{algorithmic}[1]
			\label{a.EPD}
			\STATE {\bf tuning parameters:} $r>1$, $\wh{c}>0$, $\rho_{\max}>\rho^{(0)}>\rho_{\min}>0$, and sequences $\{\varepsilon_j\}_{j\in\mathbb{N}}$ with $\varepsilon_j\!\downarrow\!0$, and $\{\eta_j\}_{j\in\mathbb{N}}$ with $\eta_j\!\downarrow\!0$, plus an agreement factor $\tau\in(0,1)$.
			\STATE {\bf input:} A positive definite matrix $\mathbf{H}^0\in\Rz^{n\times n}$, initial points $x^{(0)}_0\in\Rz^n$, $y^{(0)}_0\in  C \cap C_{\s}$.
			\STATE Compute $f\!\left(x^{(0)}_0\right)$ and find $\Upsilon^{(0)}$ satisfying 
			\begin{equation}\label{e.UpsilonCon} 
				\Upsilon^{(0)}\ge \max\left\{f\!\left(x^{(0)}_0\right), \D\min_{x\in \Rz^n} {\Phi}_{(\rho^{(0)},x_0^{(0)})}\!\left(x,y^{(0)}_0\right), \wh{c}\right\}> 0.
			\end{equation}
			\FOR{$j=1,2,\ldots$}
			\STATE Set $\ell=0$.
			\REPEAT
			\STATE Set $\ell \leftarrow \ell+1$.
			\STATE $x^{(j-1)}_{\ell}=\D
			\Argmin_{x\in \Rz^n}
			\ {\Phi}_{\left(\rho^{(j-1)},x^{(j-1)}\right)}\!\left(x,y^{(j-1)}_{\ell-1}\right)$. \hfill [by (\ref{sol: px})]
			\STATE Choose $\pi\in \tilde{\mathfrak{S}}\!\left(-p\!\left(-\nabla_x {\Phi}_{\left(\rho^{(j-1)},x^{(j-1)}\right)}\!\left(x^{(j-1)}_{\ell},y^{(j-1)}_{\ell-1}\right)\right)\right)$.
			\STATE Set $i\in [n+1]$ such that $\vert\mathcal L_{\ell}\vert=\left\vert I_1\!\big(y^{(j-1)}_{\ell-1}\big)\cup S^{\pi}_{[i,n]}\right\vert=s$.
			\STATE
			$y^{(j-1)}_{\ell}=\D\Argmin_{y\in C \cap C_{\s},\ I_1(y)\subseteq \mathcal L_{\ell}} \ {\Phi}_{\left(\rho^{(j-1)},x^{(j-1)}\right)}\!\left(x^{(j-1)}_{\ell},y\right)$. \hfill 
			\STATE ${\tt ok}=\left(\left\|\nabla_x {\Phi}_{\left(\rho^{(j-1)},x^{(j-1)}\right)}\!\left(x^{(j-1)}_{\ell},y^{(j-1)}_{\ell}\right)\right\|\le \varepsilon_{j-1}\right)$.
			\UNTIL{({\tt ok})}
			
			\STATE  Set $\rho^{(j)} = \max\!\big(\rho_{\min}, \min(r\cdot \rho^{(j-1)},\rho_{\max})\big)$.
			\STATE Set $\left(x^{(j)},y^{(j)}\right)=\left(x^{(j-1)}_{\ell}, y^{(j-1)}_{\ell}\right)$.

			\STATE Set ${\tt restart}\gets {\tt false}$.
			
			\IF{$\D\min_{x\in \Rz^n} {\Phi}_{\left(\rho^{(j)}, x^{(j)}\right)}\!\left(x,y^{(j)}\right)> \Upsilon^{(j-1)}$}
			\STATE ${\tt restart}\gets {\tt true}$.  \hfill \% $\Upsilon$-based reset for safeguarding descent is needed
			\ENDIF
			\STATE {\bf if} $j=1$ {\bf then} $\Delta^{(j-1)}=\|x^{(0)}_0 - y^{(0)}_0\|$; {\bf else}, $\Delta^{(j-1)}=\|x^{(j-1)}-y^{(j-1)}\|$; {\bf end if}
			\IF{$\ \|x^{(j)}-y^{(j)}\| > \tau\, \Delta^{(j-1)} + \eta_j$}
			\STATE ${\tt restart}\gets {\tt true}$. \hfill \% primal-dual agreement safeguard is needed
			\ENDIF
			
			\STATE {\bf if} ${\tt restart}$ {\bf then} $y^{(j)}_0 \gets y^{(0)}_0$; \quad {\bf else},   $y^{(j)}_0 \gets y^{(j)}$;\ {\bf end if}

			\STATE 
			Update $\Upsilon^{(j)} = \D\max \!\left\{ \Upsilon^{(j-1)},\ f(x^{(j)}),\ \min_{x \in \mathbb{R}^n}
			\Phi_{(\rho^{(j)},x^{(j)})}\!\big(x,y_0^{(j)}\big) \right\}$.
			\STATE Update the Hessian approximation $\mathbf{H}^{(j)}$. 
			\ENDFOR
			\STATE {\bf output:} $y^{(j)}$
		\end{algorithmic}
	\end{algorithm}

\begin{remark}[Finiteness of restart mechanisms]\label{rem:finite-restarts-intro}
Although Algorithm~\ref{a.EPD} incorporates two distinct restart safeguards—the
$\Upsilon$-based descent control and the primal-dual agreement condition—both
restart mechanisms are invoked only finitely many times along the execution of
{\tt PD-QN}. Specifically, the $\Upsilon$-based restart can occur only finitely often due to
the uniform boundedness of the penalty model values under the bounded-penalty
regime, while the primal-dual agreement restart is finite as a consequence of
the geometric contraction enforced by the agreement condition and the boundedness
of the iterates.  As a result, after finitely many outer iterations, {\tt PD-QN} proceeds without further restarts, and all subsequent iterates are generated without resetting the inner-loop initialization. Formal proofs of these finiteness properties are
provided in Section~\ref{sec:outerconvergence}.
\end{remark}

\section{Convergence 
Analysis}\label{convergence}
	
In this section, we develop a convergence theory for {\tt PD-QN}.  We begin with a discussion of the bounded-penalty assumption and its role in the algorithmic design.
		
		\textbf{Bounded penalty regime.}
		Algorithm~\ref{a.EPD} employs a penalty sequence $\{\rho^{(j)}\}_{j\in \Nz_0}$ that remains
		\bfi{bounded} between two positive constants.  This assumption,
		formalized in Assumption~\ref{assumption_penalty} (penalty parameter bounds and balance condition), below, differs from the classical {\tt PD} and augmented-Lagrangian frameworks
		(e.g.,~\cite{KanzowLapucci,lu2013sparse}),
		where the penalty parameter is driven to infinity to enforce feasibility
		asymptotically.
		In contrast, {\tt PD-QN} operates in a stabilized, bounded-penalty regime. The bounds $0<\rho_{\min}\le\rho^{(j)}\le\rho_{\max}<\infty$ guarantee
		uniform spectral conditioning of the quasi-Newton subproblems, permits the safe use of limited-memory updates~\cite{LBFGS,LMBOPT}, and prevents the numerical ill-conditioning typically observed for large penalties.
		Feasibility and descent are instead enforced directly by the projection steps onto
		$C\cap C_{\s}$ and by the primal-dual restart safeguards built into the algorithm,
		rather than by unbounded growth of $\rho^{(j)}$. 
		
		Bounded-penalty strategies of this type have also been successfully employed in
		proximal and alternating minimization schemes for nonconvex problems, such as
		PALM~\cite{Bolte2014PALM} and recent ADMM-based quasi-Newton methods~\cite{AminifardSBKMMA}.  In these approaches, keeping the penalty parameter finite yields better conditioning, allows for accurate quasi-Newton modeling, and facilitates practical convergence in large-scale settings. The following assumption formalizes this bounded-penalty condition and provides the basis for the uniform contraction analysis developed below. 
	
	\textbf{Analytic framework.}
We analyze the convergence of {\tt PD-QN} in a stabilized
\bfi{bounded-penalty} regime.
Under mild assumptions on the objective function and the quasi-Newton updates,
we first show that all inner and outer iterates are well defined and remain
uniformly bounded
(Lemma~\ref{lem:uniform-bound}, Corollary~\ref{cor:BCD_boundedness},
Proposition~\ref{prop:uniform-penalty-bound}).
As a consequence, the sequence of accepted outer iterates admits accumulation
points that satisfy primal feasibility and the imposed sparsity constraint
(Theorem~\ref{thm:outer-model-CCAM}(i)).

To characterize the limiting behavior of these iterates, we introduce the notion
of \bfi{asymptotic basic feasible} ({\tt ABF}) sequences, inspired by the
sequential stationarity framework of Kanzow et al.~\cite{kanzow2021}.
An {\tt ABF} sequence is one for which the projected-gradient residuals on the
relevant (and possibly evolving) support sets vanish asymptotically
(Theorem~\ref{thm:outer-model-CCAM}(iii)).
Such sequences provide a precise asymptotic description of the iterates produced
by {\tt PD-QN} and may be viewed as sequential approximations of
{\tt BF} points of the original sparse optimization problem
(Theorem~\ref{thm:outer-original-CCM}(i)).

We then show that every {\tt ABF} sequence gives rise to
{\tt CC-AM} stationarity, a first-order necessary condition expressed in terms of
the Fr\'echet normal cones of the convex constraint set and the sparsity set
(Theorem~\ref{thm:outer-original-CCM}(ii)).
In the present sparse symmetric setting, the cone-continuity property holds
automatically (Lemma~\ref{lem:CCP-symmetric}), and therefore
{\tt CC-AM} stationarity implies {\tt CC-M} stationarity in the sense of
Mordukhovich.
Combined with the primal-dual agreement safeguard built into {\tt PD-QN}
(Lemma~\ref{lem:finite-restarts}), this analysis establishes that accumulation
points of the algorithm are both {\tt BF} and {\tt CC-M} stationarity for the
original problem
(Theorem~\ref{thm:outer-original-CCM}, Corollary~\ref{cor:algo-convergence}).

Finally, under bounded penalty parameters, the penalty model family enjoys \bfi{uniform strong convexity}, which yields global quadratic growth and error
bound properties independent of the outer iteration index. Leveraging these structural features together with sufficient descent, relative error control,
and the primal-dual agreement safeguard, we strengthen the above subsequential guarantees and establish convergence of the \bfi{entire} sequence of iterates
(Theorem~\ref{thm:global-KL-bdd}). Although this result can be interpreted within the Kurdyka--\L{}ojasiewicz framework (with exponent $\tfrac12$), no explicit KL
assumption is required.

	\subsection{Required Assumptions for Global Convergence of  {\tt PD-QN}}
	
	To establish that our algorithm ({\tt PD-QN}) converges to a {\tt BF} point and an {\tt M}-stationarity point, we begin by summarizing some mild assumptions on the gradient $g(x)$ and the Hessian approximations $\left\{\mathbf{H}^{(j)}\right\}_{j\in \mathbb{N}_0}$, where $\mathbb{N}_0 := \mathbb{N} \cup \{0\}$.

    \begin{assumption} \label{assumption_hessian}  (To be imposed on the Hessian approximation) 
			For any $j\in \mathbb N_0$, all the eigenvalues of $\mathbf{H}^{(j)}$ belong to the interval
			$[\lambda_{\min}, \lambda_{\max}]$ with $\lambda_{\min} >0$.
	\end{assumption}
	The assumption that all eigenvalues of $\mathbf H^{(j)}$ lie in $[\lambda_{\min},\lambda_{\max}]$ ensures uniform positive definiteness. It might help to mention explicitly that this condition rules out ill-conditioning in the diagonal Hessian updates, which is necessary for the strong convexity arguments in Lemma~\ref{lem:uniform-bound}. This uniform positive definiteness ensures the strong convexity of each subproblem and precludes numerical ill conditioning of the quasi-Newton updates.

    \begin{assumption}\label{assGrad}
    \textbf{(Gradient growth condition)}  
    Let $C \subseteq \mathbb{R}^n$ be closed, convex, and symmetric. 
    \begin{itemize}
        \item [(i)] If $C$ is bounded, let $\zeta := \max\{\|x\| : x \in C\} < \infty$.
\item [(ii)] If $C$ is unbounded, let $\zeta \ge 1$ be a fixed reference scaling parameter (e.g., $\zeta := \max\{1, \|y^{(0)}\|\}$).    \end{itemize}
    The gradient $g(x)=\nabla f(x)$ exists and is continuous on $\mathbb{R}^n$, and there exist constants $\gamma>0$ and $c>0$ such that, for all $x\in\mathbb{R}^n$,
    \[
    \|g(x)\| \le 
    \begin{cases} 
        \gamma, & \|x\|\le c\,\zeta,\\
        \gamma\,\|x\|, & \|x\|>c\,\zeta.
    \end{cases}
    \]
    Here, $c>0$ is fixed so that the \bfi{balance condition} 
    \begin{equation}\label{eq:balanceCon}
        \bar\kappa < c(1-\bar\theta)
    \end{equation}
    in Assumption~\ref{assumption_penalty} holds, where
    		\begin{equation}\label{eq:bar-theta-kappa-general}
			\bar\theta := \frac{\lambda_{\max}+\gamma}{\lambda_{\min}+\rho_{\min}},
			\qquad
			\bar\kappa := \max\!\left\{\,1,\ \frac{\rho_{\min}+\gamma}{\lambda_{\min}+\rho_{\min}},\ \frac{\rho_{\min}+\gamma/\zeta}{\lambda_{\min}+\rho_{\min}}\,\right\}
		\end{equation}
       with the tuning parameters $\rho_{\min}$ and $\rho_{\max}$ satisfying
        \begin{equation}\label{eq:rhobd}
		0 < \rho_{\min} \ \le\ \rho^{(j)} \ \le\ \rho_{\max} < \infty,
		\quad\text{for all } j.
		\end{equation}
and the constraints $\lambda_{\min}$ and $\lambda_{\max}$ defined in Assumption \ref{assumption_hessian}.
\end{assumption}

\begin{remark}[On the case $C = \infty$ and the Entrance Argument]
    \label{rem:zeta-bounded}
    While Lemma~\ref{lem:uniform-bound}, below, is stated for bounded $C$ (where $\zeta < \infty$), the result extends naturally to unbounded sets $C = \mathbb{R}^n$. In the unbounded setting, the parameter $\zeta$ no longer represents a global radius of the set, but instead functions as a reference scaling factor (e.g., $\zeta := \max\{1, \|y^{(0)}\|\}$). Under the strict contraction condition $\tilde\theta < 1$, the recurrence 
    \begin{equation}\label{eq:recurrence-general}
	\|x^{(j)}\|\;\le\;\bar\theta\,\|x^{(j-1)}\|+\bar\kappa\,\zeta
	\end{equation}
    established in Lemma \ref{lem:uniform-bound} implies that the sequence $\{x^{(j)}\}_{j\in\Nz_0}$ is globally attracted to a computational invariant ball $\mathbb{B}(0, R_{\infty})$ with radius $R_{\infty} := \bar\kappa \zeta/(1 - \bar\theta)$, i.e.,
    the \bfi{autonomous boundedness property} 
\begin{equation}\label{eq:autbd}
    \limsup_{j \to \infty} \|x^{(j)}\| \le R_{\infty}
\end{equation}
    holds (see Corollary \ref{cor:autbd}, below). Specifically, if the initial iterate $x^{(0)}$ lies outside this ball, the geometric decay $\|x^{(j)}\| \le \tilde\theta^j \|x^{(0)}\| + \text{constant}$ ensures that the sequence enters the compact region $\mathbb{B}(0, c\zeta)$ in finite time. This ``argument" ensures that even without a bounded feasibility set, the trajectory remains within a compact region, where the gradient $g(x)$ is effectively locally Lipschitz, thereby satisfying the necessary conditions for stationarity and global convergence analysis.
\end{remark}

	Remark~\ref{rem:zeta-bounded} currently mentions the unbounded case only briefly. For transparency, it may be worth emphasizing that when $C$ is unbounded, boundedness of iterates relies solely on the contraction inequality (defined by \eqref{eq:recurrence-general}, below). This remains consistent with the spirit of the convergence analyses in \cite{Lapucci2020,lu2013sparse}, where boundedness of iterates is also ensured, 
	although the mechanism is different: classical {\tt PD} methods rely on diverging 
	penalties, while in our setting boundedness follows from the strict contraction 
	inequality established in Lemma~\ref{lem:uniform-bound}.

	The gradient growth condition is indeed weaker than Lipschitz continuity, but it still guarantees local Lipschitz continuity on bounded subsets. In particular, since the iterates are bounded (by Lemma~\ref{lem:uniform-bound}), the gradient is effectively Lipschitz continuous along the relevant trajectory. This observation clarifies why standard quasi-Newton arguments remain valid 
	under Assumption~\ref{assGrad}.

	It can be observed that the so-called Lipschitz-from-the-origin condition,
	\[
	\|g(x) - g(0)\| \le \wh \gamma \|x\|, \quad \text{for all } x \in C \subseteq \mathbb{R}^n,
	\]
	for some constant $\wh \gamma > 0$, implies Assumption~\ref{assGrad}. Hence, Assumption~\ref{assGrad} is weaker than the Lipschitz-from-the-origin condition, which itself is weaker than the standard Lipschitz continuity condition commonly used in the literature. Specifically, the standard Lipschitz condition requires the existence of a constant $L > 0$ such that
	\[
	\|g(x) - g(\wt{x})\| \le L \|x - \wt{x}\|, \quad \text{for all } x, \wt{x} \in C \subseteq \mathbb{R}^n.
	\]
	Therefore, our assumption on $g$ is significantly less restrictive than the conventional assumptions on the gradient imposed on the relevant literature.

	Meanwhile, uniform boundedness of the positive definite Hessian updating formulas is another standard assumption in the convergence analysis of quasi-Newton methods; we here adopt a similar requirement as well.

	Assumption~\ref{assGrad} also implies that the gradient remains bounded over any bounded subset
	of $\mathbb{R}^n$.  In particular, if $C$ is bounded, then $\|x\|\le\zeta$ implies
	$\|g(x)\|\le\gamma$, so the gradient cannot grow unboundedly along the trajectory of the iterates.  This property maintains algorithmic stability and substitutes for the stronger global Lipschitz continuity condition commonly used in quasi-Newton analyses.


	\begin{assumption}\label{assumption_penalty}
		\textbf{(Penalty parameter bounds and balance condition)}  
		
		The penalty sequence $\{\rho^{(j)}\}_{j\in\mathbb{N}_0}$ satisfies \gzit{eq:rhobd} and the parameters $\bar\theta$ and $\bar\kappa$ are defined 
		as \gzit{eq:bar-theta-kappa-general}. With this choice, the unified recurrence \gzit{eq:recurrence-general} (proved in Lemma~\ref{lem:uniform-bound}, below) satisfies $(\rho_{\min}\zeta+\gamma)/(\lambda_{\min}+\rho_{\min}) \le \bar\kappa\,\zeta$
		for all $\zeta\ge 1$. We require:
		\begin{enumerate}
			\item[\textup{(i)}] \textbf{Strict contraction:} 
			\(\rho_{\min} > \lambda_{\max} + \gamma - \lambda_{\min}\), 
			equivalently \(0 < \bar\theta < 1\).
			\item[\textup{(ii)}] \textbf{Balance condition for bounded $C$:} 
			There exists \(c>0\) such that  the balance condition \gzit{eq:balanceCon} holds.
           \item[\textup{(iii)}] \textbf{Balance condition for unbounded $C$:}
If $C=\Rz^n$, the balance condition \gzit{eq:balanceCon} ensures autonomous
boundedness of the iterates as quantified by \gzit{eq:autbd}.
		\end{enumerate}
		Intuitively, the balance condition guarantees that the penalization is strong enough
		to dominate the possible growth of the gradient term, ensuring a strict contraction
		of the recurrence into the invariant ball. Under these conditions, the effective contraction factor $\tilde\theta := \bar\theta + {\bar\kappa}/{c}$
		satisfies \(\tilde\theta < 1\), 
		which guarantees finite-time entrance into the invariant ball 
		\(\{x : \|x\|\le c\zeta\}\).
	\end{assumption}

    The balance condition \gzit{eq:balanceCon} guarantees that the penalization is strong enough to dominate the possible linear growth of the gradient term $\gamma \|x\|$. In the unbounded case ($C = \Rz^n$), this condition ensures a \bfi{global dissipative property}: the ``pull'' from the quadratic penalty $\frac{\rho}{2}\|x-y\|^2$ toward the constrained origin outweighs the ``push'' from the gradient growth. Consequently, the algorithm generates the autonomous boundedness property \gzit{eq:autbd}, which provides a surrogate for the compactness of $C$ typically required in penalty methods. Unlike classical methods that drive $\rho \to \infty$ to prevent divergence on unbounded domains, {\tt PD-QN} maintains a bounded $\rho \in [\rho_{\min}, \rho_{\max}]$, preserving the numerical conditioning of the quasi-Newton updates while ensuring stability through this spectral balance.

\begin{remark}[Bounded penalty regime versus classical {\tt PD} methods]
			Unlike classical penalty decomposition algorithms 
			analyzed in \cite{KanzowLapucci,lu2013sparse} whose convergence 
			analysis relies on an ever-increasing penalty parameter $\rho_k \!\to\! \infty$ to drive feasibility, Algorithm~\ref{a.EPD} operates in a \bfi{bounded-penalty regime} with \gzit{eq:rhobd}. This choice is essential to ensure uniform spectral conditioning of the quasi-Newton subproblems and to establish the contraction inequality of Lemma~\ref{lem:uniform-bound}. In the present framework, feasibility and descent are not obtained 
			asymptotically by enlarging the penalty, but are instead enforced directly through the projection steps and the restart safeguards built into the algorithm.
	\end{remark}

	\subsection{Uniform upper bound on penalty model values}
	
	We now establish that the penalty model values remain uniformly bounded across all outer iterations of {\tt PD-QN}. 
	This property is essential for the convergence analysis, since it guarantees that the safeguard resets in 
	Algorithm~\ref{a.EPD} always relies on a valid finite bound~\cite{BeckBook}.  
	
	In line~3 of Algorithm~\ref{a.EPD}, the initialization computes $\Upsilon^{(0)}$ so that condition~\eqref{e.UpsilonCon} holds, 
	ensuring that the very first penalty model is bounded. 
	However, as the algorithm progresses, both the penalty parameter $\rho^{(j)}$ and the iterate $x^{(j)}$ evolve, 
	giving rise to new models of the form
	\[
	x \mapsto \Phi_{(\rho^{(j)},x^{(j)})}\big(x,y_0^{(0)}\big).
	\]
	The initial constant $\Upsilon^{(0)}$ need not control these later models, since the quantities 
	\[
	\min_x \Phi_{(\rho^{(j)},x^{(j)})}(x,y_0^{(0)})
	\]
	may increase with $j$.  
	To address this, Algorithm~\ref{a.EPD} maintains a nondecreasing sequence of bounds 
	by updating at each outer iteration
	\[
	\Upsilon^{(j)} = \max \left\{ \Upsilon^{(j-1)},\ f(x^{(j)}),\ \min_{x \in \mathbb{R}^n} 
	\Phi_{(\rho^{(j)},x^{(j)})}(x,y_0^{(j)}) \right\}.
	\]
	This update guarantees that $\Upsilon^{(j)}$ dominates the current function value and the relevant model value, 
	and hence that the reset condition in Algorithm~\ref{a.EPD} always employs a valid finite threshold.  
	
	The following results formalize this property: under Assumptions~\ref{assumption_hessian}--\ref{assumption_penalty}, 
	the sequence of penalty models admits a uniform upper bound that is independent of the iteration index.

	\begin{lemma}[Uniform bound under standard conditions]
		\label{lem:uniform-bound}
		Suppose\\
		\pt $f:\mathbb{R}^n \to \mathbb{R}$ is continuously differentiable and $g(x)=\nabla f(x)$ satisfies Assumption~\ref{assGrad}, with constants $\gamma,\zeta,c$ therein.\\
		\pt $C \subseteq \mathbb{R}^n$ is closed, convex, symmetric (either nonnegative type-1 or type-2), and bounded with $\zeta := \max\{\|x\|:x\in C\}<\infty$ (if $C$ is unbounded, $\zeta$ is the reference scaling parameter defined in Assumption~\ref{assGrad}).\\
		\pt The Hessian approximations satisfy Assumption~\ref{assumption_hessian}: there exist $0<\lambda_{\min} \le \lambda_{\max}$ such that
		\[
		\lambda_{\min} I \preceq \mathbf{H}^{(j)} \preceq \lambda_{\max} I
		\quad\text{for all } j\in\mathbb{N}_0.
		\]
		\pt Penalty parameters satisfy Assumption~\ref{assumption_penalty}, with $\bar\theta,\bar\kappa$ as in \eqref{eq:bar-theta-kappa-general} and $\tilde\theta<1$.\\
		Then:\\
		(i) The unified recurrence \gzit{eq:recurrence-general} holds and therefore the sequence $\{x^{(j)}\}_{j\in\mathbb{N}_0}\subset\Rz^n$ is uniformly bounded.\\
		(ii) There exists a constant $\Upsilon^{\text{exact}} > 0$ such that, for every outer iteration $j \in \mathbb{N}_0$,
		\[
		\min_{x \in \mathbb{R}^n} \Phi_{(\rho^{(j)}, x^{(j)})}(x, y_0^{(j)}) \ \le \ \Upsilon^{\text{exact}}.
		\]
	\end{lemma}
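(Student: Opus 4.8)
The plan is to read the recurrence \eqref{eq:recurrence-general} straight off the closed-form update \eqref{sol: px} and then iterate it. First I would fix the terminal inner index $\ell$ and write $x^{(j)} = (\mathbf{H}^{(j-1)}+\rho^{(j-1)}I)^{-1}\big(\mathbf{H}^{(j-1)}x^{(j-1)}+\rho^{(j-1)}y^{(j-1)}_{\ell-1}-g(x^{(j-1)})\big)$, take norms, and split via the triangle inequality into a Hessian part, a $y$-part, and a gradient part. Using Assumption~\ref{assumption_hessian} together with $\rho^{(j-1)}\ge\rho_{\min}$, the inverse has spectral norm at most $(\lambda_{\min}+\rho_{\min})^{-1}$, so the Hessian part is bounded by $\lambda_{\max}\|x^{(j-1)}\|/(\lambda_{\min}+\rho_{\min})$. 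For the $y$-part I would note that $y^{(j-1)}_{\ell-1}\in C$ gives $\|y^{(j-1)}_{\ell-1}\|\le\zeta$, and that the relevant operator $(\mathbf{H}^{(j-1)}+\rho^{(j-1)}I)^{-1}\rho^{(j-1)}$ has spectral norm $\rho^{(j-1)}/(\lambda_{\min}+\rho^{(j-1)})<1$; this strict bound is precisely what lets the entry ``$1$'' in the definition \eqref{eq:bar-theta-kappa-general} of $\bar\kappa$ absorb the $y$-contribution.

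Next I would dispatch the gradient part using the growth condition of Assumption~\ref{assGrad}, which forces a case split. In the far regime $\|x^{(j-1)}\|>c\zeta$ one has $\|g(x^{(j-1)})\|\le\gamma\|x^{(j-1)}\|$, contributing a further $\gamma/(\lambda_{\min}+\rho_{\min})$ multiple of $\|x^{(j-1)}\|$; combined with the Hessian part this assembles exactly the coefficient $\bar\theta=(\lambda_{\max}+\gamma)/(\lambda_{\min}+\rho_{\min})$, while the $y$-part supplies the additive $\bar\kappa\zeta$. In the near regime $\|x^{(j-1)}\|\le c\zeta$ one instead uses $\|g(x^{(j-1)})\|\le\gamma$; here the entire right-hand side is already a constant multiple of $\zeta$, and I would check that this constant is dominated by $\bar\kappa\zeta$ using the remaining two entries of \eqref{eq:bar-theta-kappa-general} and the reduction inequality $(\rho_{\min}\zeta+\gamma)/(\lambda_{\min}+\rho_{\min})\le\bar\kappa\zeta$ recorded in Assumption~\ref{assumption_penalty}. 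In both regimes \eqref{eq:recurrence-general} holds, and since Assumption~\ref{assumption_penalty}(i) gives $\bar\theta<1$, unrolling yields $\|x^{(j)}\|\le\bar\theta^{\,j}\|x^{(0)}\|+\bar\kappa\zeta/(1-\bar\theta)$, which proves the uniform bound of part~(i).

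For part~(ii) the strategy is to control the minimized model value by evaluating $\Phi_{(\rho^{(j)},x^{(j)})}(\cdot,y_0^{(j)})$ at the single admissible point $x=x^{(j)}$, legitimate because the $x$-minimization is unconstrained. Since the model in \eqref{e.modelfunc} is centered at $z=x^{(j)}$, its linear and quadratic terms vanish at $x=x^{(j)}$, leaving only the penalty term, so $\min_x\Phi_{(\rho^{(j)},x^{(j)})}(x,y_0^{(j)})\le\tfrac12\rho^{(j)}\|x^{(j)}-y_0^{(j)}\|^2\le\tfrac12\rho_{\max}\|x^{(j)}-y_0^{(j)}\|^2$. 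By line~23 of Algorithm~\ref{a.EPD}, $y_0^{(j)}$ equals either $y^{(0)}_0$ or $y^{(j)}$, both lying in $C$ and hence satisfying $\|y_0^{(j)}\|\le\zeta$; combining this with $R:=\sup_j\|x^{(j)}\|<\infty$ from part~(i) gives $\|x^{(j)}-y_0^{(j)}\|\le R+\zeta$, so the claim follows with $\Upsilon^{\mathrm{exact}}:=\tfrac12\rho_{\max}(R+\zeta)^2$, independent of $j$.

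The hard part will be the bookkeeping in part~(i): because $\rho^{(j-1)}$ appears simultaneously inside the matrix inverse and as the coefficient of the $y$-term, one must resist bounding it crudely by $\rho_{\max}$ in the numerator and instead exploit the cancellation making $\rho^{(j-1)}/(\lambda_{\min}+\rho^{(j-1)})<1$; matching this and the near-regime constant against the precise three-way maximum defining $\bar\kappa$ is the delicate step, and it is where the balance condition is really used. By contrast, the contraction step and all of part~(ii) are routine once the recurrence and the uniform bound are secured. For unbounded $C$ I would replace $\|y_0^{(j)}\|\le\zeta$ by the autonomous boundedness property \eqref{eq:autbd} guaranteed under the balance condition, leaving the structure of the argument otherwise unchanged.
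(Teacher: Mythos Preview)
Your proposal is correct and follows essentially the same route as the paper: derive the norm recurrence from the closed-form $x$-update, split into the near/far gradient regimes, and for part~(ii) evaluate the model at $x=x^{(j)}$ so that only the penalty term survives. The one substantive difference is that the paper, instead of simply unrolling the recurrence, proves finite-time entrance into and invariance of the ball $\{\|x\|\le c\zeta\}$ (using the balance condition $\bar\kappa<c(1-\bar\theta)$), which yields the explicit constant $\Upsilon^{\mathrm{exact}}=\tfrac12\rho_{\max}(c+1)^2\zeta^2$ and is reused in later results (e.g., Lemma~\ref{lem:bounded_below_quadratic}, where $\|x^{(j-1)}\|\le c\zeta$ is needed to invoke $\|g(x^{(j-1)})\|\le\gamma$); your unrolling gives a valid but unspecified $R$, which is enough for the lemma as stated but would need the invariant-ball argument downstream.
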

	
	\begin{proof}
		(i) Boundedness of the iterates $\{x^{(j)}\}_{j\in\mathbb{N}_0}$. 
		From the $x$-update optimality condition \gzit{e.optPx}, we obtain
		\[
		(\mathbf{H}^{(j-1)} + \rho^{(j-1)} I)x^{(j)} 
		= \mathbf{H}^{(j-1)}x^{(j-1)} + \rho^{(j-1)}\wt y^{(j-1)} - g(x^{(j-1)}),
		\]
		where $\wt y^{(j-1)}$ denotes the $y$ paired with the last $x$-update (in the algorithm $\wt y^{(j-1)}= y^{(j-1)}_{\ell-1}$). Taking norms and using Assumption~\ref{assumption_hessian},
		\[
		\|x^{(j)}\|\;\le\;\frac{1}{\lambda_{\min}+\rho_{\min}}
		\Bigl( \lambda_{\max}\|x^{(j-1)}\| + \rho^{(j-1)}\|\wt y^{(j-1)}\| + \|g(x^{(j-1)})\|\Bigr).
		\]
		
		\noindent
		Since $\|\wt y^{(j-1)}\|\le\zeta$ by boundedness of $C$, it remains to bound the gradient term.  
		By Assumption~\ref{assGrad}:\\
		\pt If $\|x^{(j-1)}\|\le c\zeta$, then $\|g(x^{(j-1)})\|\le \gamma$. Hence
		\[
		\|x^{(j)}\|\;\le\;\frac{1}{\lambda_{\min}+\rho_{\min}}
		\Bigl( \lambda_{\max}\|x^{(j-1)}\| + \rho_{\min}\zeta + \gamma \Bigr).
		\tag{bd1}
		\]
		\pt If $\|x^{(j-1)}\|>c\zeta$, then $\|g(x^{(j-1)})\|\le \gamma\|x^{(j-1)}\|$. Hence
		\[
		\|x^{(j)}\|\;\le\;\frac{1}{\lambda_{\min}+\rho_{\min}}
		\Bigl( (\lambda_{\max}+\gamma)\|x^{(j-1)}\| + \rho_{\min}\zeta \Bigr).
		\tag{bd2}
		\]
		Both (bd1) and (bd2) can be cast into the recurrence form
		\[
		\|x^{(j)}\|\;\le\;\theta_j\|x^{(j-1)}\|+\kappa_j\zeta,
		\]
		with
		\[
		\theta_j := \frac{\lambda_{\max}+\gamma}{\lambda_{\min}+\rho^{(j-1)}}\le \bar\theta,
		\qquad
		\kappa_j := \frac{\rho^{(j-1)}+\gamma}{\lambda_{\min}+\rho^{(j-1)}}\le \bar{\kappa}.
		\]
		Here, $\bar{\theta}$ and $\bar{\kappa}$ are from \gzit{eq:bar-theta-kappa-general}. For the additive constant, we handle the two cases uniformly as follows. In (bd2), it equals
		$\D\frac{\rho_{\min}\zeta}{\lambda_{\min}+\rho_{\min}}$, and hence is bounded by
		$\D\frac{(\rho_{\min}+\gamma)\zeta}{\lambda_{\min}+\rho_{\min}}$.
		In (bd1), it equals $\D\frac{\rho_{\min}\zeta+\gamma}{\lambda_{\min}+\rho_{\min}}$, which does not scale with $\zeta$ if $\zeta<1$. Since $\bar\kappa$ in \eqref{eq:bar-theta-kappa-general} dominates both
		$\D\frac{\rho_{\min}+\gamma}{\lambda_{\min}+\rho_{\min}}$ and 
		$\D\frac{\rho_{\min}+\gamma/\zeta}{\lambda_{\min}+\rho_{\min}}$, we obtain
		$\kappa_j \le \bar\kappa$ in both (bd1) and (bd2), and hence for all $j$, \gzit{eq:recurrence-general} is satisfied. Since Assumption~\ref{assumption_penalty} only requires the existence of a
		constant upper bound on the additive coefficient, we may, without loss of generality, enlarge $\bar\kappa$ slightly so that it dominates all admissible
		values of $\kappa_j$.  Enlarging $\bar\kappa$ preserves the balance condition
		because we can always choose $c>\bar\kappa/(1-\bar\theta)$.

        Since $\zeta \ge 1$ by Assumption~\ref{assGrad}, the term $\gamma/\zeta$ in \eqref{eq:bar-theta-kappa-general} remains bounded, ensuring $\bar\kappa$ is a finite constant and the unified recurrence \eqref{eq:recurrence-general} is well-defined for both bounded and unbounded sets $C$.
		
		We now show that $\{x^{(j)}\}_{j\in\mathbb{N}_0}$ enters the ball $\{x:\|x\|\le c\zeta\}$ in finite time and remains there.
		Thus, the sequence is uniformly bounded. To do that, we distinguish three cases:\\ 
		{\sc Case ia.} \textbf{Geometric decay above the threshold:}
			If $\|x^{(j-1)}\|>c\zeta$, then we show that
			\[
			\|x^{(j)}\| \ \le\ \tilde\theta\,\|x^{(j-1)}\|.
			\]
			From $\|x^{(j-1)}\|>c\zeta$ and \gzit{eq:recurrence-general}, we obtain
			\[
			\|x^{(j)}\| \ \le\ \bar\theta\,\|x^{(j-1)}\| + \frac{\bar\kappa}{c}\,\|x^{(j-1)}\|
			= \Big(\bar\theta+\frac{\bar\kappa}{c}\Big)\|x^{(j-1)}\|=\tilde\theta\,\|x^{(j-1)}\|.
			\]
			Here, $\tilde\theta=\bar\theta+\D\frac{\bar\kappa}{c}<1$ by Assumption~\ref{assumption_penalty}.\\
			{\sc Case ib.} \textbf{Finite-time entrance:}
			If $\|x^{(0)}\|>c\zeta$, then after at most
			\[
			\ol j \ \le\ 
			\left\lceil \frac{\log\!\big(\|x^{(0)}\|/(c\zeta)\big)}{\log(1/\tilde\theta)} \right\rceil
			\]
			iterations, we show that $\|x^{(\ol j)}\|\le c\zeta$. By induction from {\sc Case ia}, as long as $\|x^{(k)}\|>c\zeta$,
			\[
			\|x^{(j)}\| \ \le\ \tilde\theta^{\,j}\,\|x^{(0)}\|.
			\]
			The bound on $\ol j$ ensures $\|x^{(\ol j)}\|\le c\zeta$.\\
			{\sc Case ic.} \textbf{Invariance of the ball:}
			If $\|x^{(j-1)}\|\le c\zeta$ for some $j\ge 1$, then we show that $\|x^{(j)}\|\le c\zeta$. Since $\|x^{(j-1)}\|\le c\zeta$ is assumed, from \eqref{eq:recurrence-general},
			\[
			\|x^{(j)}\| \ \le\ \bar\theta(c\zeta) + \bar\kappa\,\zeta
			\ \le\ c\bar\theta\,\zeta + c(1-\bar\theta)\,\zeta
			= c\zeta,
			\]
			where the last inequality uses the balance condition from Assumption~\ref{assumption_penalty}.

The results of {\sc Case ia} through {\sc Case ic} establish a ``dissipative" property of the algorithm: the sequence $\{x^{(j)}\}_{j\in\mathbb{N}_0}$ is globally attracted to the ball  $\mathbb{B}(0, c\zeta)$ and, once inside, remains trapped within it for all subsequent iterations. Consequently, the entire trajectory resides within the  compact set $\mathbb{B}(0, \max\{\|x^{(0)}\|, c\zeta\})$. By the  Bolzano-Weierstrass theorem, any sequence contained within a compact subset of $\Rz^n$ must possess at least one accumulation point. This reduces the convergence analysis on the potentially unbounded domain $C = \mathbb{R}^n$ to the analysis of a sequence on a compact set, ensuring that the set of limit points $\Omega(\{x^{(j)}\})$ is non-empty.

(ii) Uniform bound on the penalty model values.  Fix $j\in\mathbb{N}_0$ and consider 
\[
\Phi_j(x):=\Phi_{(\rho^{(j)},x^{(j)})}(x,y_0^{(j)}).
\]
Since $\Phi_j$ is strongly convex, it admits a unique minimizer $x_j^*$. By optimality,
		\[
		\min_x \Phi_j(x) = \Phi_j(x_j^*) \;\le\; \Phi_j(x^{(j)}).
		\]
		Evaluating at $x^{(j)}$ gives
		\[
		\Phi_j(x^{(j)}) = \tfrac{1}{2}\rho^{(j)}\|x^{(j)}-y_0^{(j)}\|^2.
		\]
		By part~(i), $\|x^{(j)}\|\le c\zeta$, and by boundedness of $C$, $\|y_0^{(j)}\|\le\zeta$.  
		Hence
		\[
		\|x^{(j)}-y_0^{(j)}\| \;\le\; \|x^{(j)}\|+\|y_0^{(j)}\|\;\le\;(c+1)\zeta.
		\]
		Since $\rho^{(j)}\le\rho_{\max}$, it follows that
		\[
		\min_x \Phi_j(x) \;\le\;\tfrac{1}{2}\rho_{\max}(c+1)^2\zeta^2.
		\]
		Setting $\Upsilon^{\text{exact}}:=\tfrac{1}{2}\rho_{\max}(c+1)^2\zeta^2$ provides the desired uniform bound. \qed
	\end{proof}

\begin{corollary}[Autonomous boundedness]
\label{cor:autbd}
Under the assumptions of Lemma~\ref{lem:uniform-bound}, the iterates satisfy the condition \gzit{eq:autbd}, that is
\[
\limsup_{j\to\infty}\|x^{(j)}\|
\;\le\;
\frac{\bar\kappa\,\zeta}{1-\bar\theta}
=:R_\infty.
\]
\end{corollary}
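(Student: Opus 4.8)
The plan is to read off the corollary directly from the unified recurrence
\eqref{eq:recurrence-general} already established in part~(i) of
Lemma~\ref{lem:uniform-bound}, namely
\[
\|x^{(j)}\| \;\le\; \bar\theta\,\|x^{(j-1)}\| + \bar\kappa\,\zeta,
\]
and to treat this as a first-order affine recurrence with contraction factor
$\bar\theta\in(0,1)$. Since an affine contraction has a unique fixed point
$t^\star=\bar\kappa\zeta/(1-\bar\theta)$ toward which every orbit is drawn
geometrically, the corollary is nothing more than a record of this attracting
radius; no new analytic input beyond the recurrence and the strict-contraction
bound $\bar\theta<1$ is needed.

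First I would unroll the recurrence by repeated substitution. Iterating it $j$
times starting from $x^{(0)}$ yields
\[
\|x^{(j)}\| \;\le\; \bar\theta^{\,j}\,\|x^{(0)}\|
\;+\; \bar\kappa\,\zeta \sum_{k=0}^{j-1} \bar\theta^{\,k}.
\]
Because $0<\bar\theta<1$ by Assumption~\ref{assumption_penalty}(i), the geometric
partial sum is bounded above by its infinite-series value
$\sum_{k=0}^{\infty}\bar\theta^{\,k}=1/(1-\bar\theta)$, and hence
\[
\|x^{(j)}\| \;\le\; \bar\theta^{\,j}\,\|x^{(0)}\|
\;+\; \frac{\bar\kappa\,\zeta}{1-\bar\theta}
\qquad\text{for every } j\in\mathbb{N}_0.
\]

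Finally I would pass to the limit superior as $j\to\infty$. Since $\bar\theta\in(0,1)$,
the transient term $\bar\theta^{\,j}\,\|x^{(0)}\|$ tends to $0$, leaving
\[
\limsup_{j\to\infty}\|x^{(j)}\| \;\le\; \frac{\bar\kappa\,\zeta}{1-\bar\theta}
\;=\; R_\infty,
\]
which is exactly the asserted autonomous-boundedness bound \eqref{eq:autbd}.
There is no substantive obstacle in this argument: the two facts that require
care are the validity of $\bar\theta<1$, guaranteed by
Assumption~\ref{assumption_penalty}(i), and the fact that the recurrence holds
for \emph{all} $j$ rather than only eventually, which is precisely what part~(i)
of Lemma~\ref{lem:uniform-bound} supplies. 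The heavy lifting---deriving
\eqref{eq:recurrence-general} uniformly in $j$ and distinguishing the
above/below-threshold cases---was already carried out in that lemma, so the
present proof reduces to the standard contraction-to-fixed-point estimate and is
insensitive to whether $C$ is bounded or equals $\mathbb{R}^n$.
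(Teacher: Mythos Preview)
Your proof is correct and essentially identical to the paper's own argument: both unroll the affine recurrence \eqref{eq:recurrence-general} from Lemma~\ref{lem:uniform-bound}(i), sum the resulting geometric series using $\bar\theta\in(0,1)$, and then let $j\to\infty$. The only cosmetic difference is that the paper keeps the exact partial sum $\frac{\bar\kappa\zeta}{1-\bar\theta}(1-\bar\theta^{\,j})$ before passing to the limit, whereas you bound the partial sum by the full series immediately; this makes no substantive difference.
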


\begin{proof}
From Lemma~\ref{lem:uniform-bound}(i), the unified recurrence
\[
\|x^{(j)}\|\le \bar\theta\|x^{(j-1)}\|+\bar\kappa\zeta
\]
holds for all \(j\), with \(0<\bar\theta<1\).
Unrolling this inequality yields
\[
\|x^{(j)}\|
\le
\bar\theta^j\|x^{(0)}\|
+
\bar\kappa\zeta\sum_{i=0}^{j-1}\bar\theta^i
=
\bar\theta^j\|x^{(0)}\|
+
\frac{\bar\kappa\zeta}{1-\bar\theta}\bigl(1-\bar\theta^j\bigr).
\]
Letting \(j\to\infty\) gives the stated bound. \qed
\end{proof}

	\begin{remark}[On the choice and role of $\Upsilon$]
		Lemma~\ref{lem:uniform-bound} provides an explicit uniform bound
		\[
		\Upsilon^{\text{exact}} := \tfrac{1}{2}\rho_{\max}(c+1)^2\zeta^2
		\]
		valid under exact minimization of the $x$-subproblem. 
		In Proposition~\ref{prop:uniform-penalty-bound}, this constant is slightly enlarged by an 
		additional error term 
		\(\frac{1}{2}(\lambda_{\min}+\rho_{\min})^{-1}\sup_{k\in\mathbb{N}_0} \varepsilon_k^2\) 
		to account for the inexact termination of the inner loop. In practice, however, the exact value of $\Upsilon^{\text{exact}}$ (or any enlarged version $\Upsilon$) 
		is not known a priori. Instead, Algorithm~\ref{a.EPD} initializes with a safe bound $\Upsilon^{(0)}$ in line~3, 
		chosen to dominate the first penalty model, and then maintains a nondecreasing sequence by updating at each outer iteration:
		\[
		\Upsilon^{(j)} := \max \left\{ \Upsilon^{(j-1)}, f(x^{(j)}), \min_{x \in \mathbb{R}^n} \Phi_{(\rho^{(j)}, x^{(j)})}\big(x, y_0^{(j)}\big) \right\}.
		\]
		This adaptive safeguard guarantees that each $\Upsilon^{(j)}$ upper-bounds the relevant model values, 
		thereby preserving the correctness of the reset condition at every iteration---without requiring 
		knowledge of the exact constant $\Upsilon^{\text{exact}}$.  
		
		Conceptually, the monotonicity of $\{\Upsilon^{(j)}\}_{j\in\mathbb{N}_0}$ plays a role similar to potential functions 
		in descent methods and safeguarding techniques in nonconvex block-coordinate or proximal alternating 
		schemes~\cite{Bolte2014PALM}: it ensures boundedness and robustness of the method even under nonconvexity 
		and inexact subproblem solutions.
	\end{remark}

	\begin{proposition}[Uniform $\Upsilon$ on penalty model values]
		\label{prop:uniform-penalty-bound}
		Let $\{(x^{(j)},y^{(j)})\}_{j\in\mathbb{N}_0}$ be the sequence generated by {\tt PD-QN}, 
		and suppose Assumptions~\ref{assumption_hessian}--\ref{assumption_penalty} hold, together with the conditions of
		Lemma~\ref{lem:uniform-bound}.  
		Define
		\[
		\Upsilon \;:=\; \frac{1}{2}\rho_{\max}\,(c+1)^2\zeta^2 
		+ \frac{1}{2(\lambda_{\min}+\rho_{\min})}\sup_{k\in\mathbb{N}_0} \varepsilon_k^2,
		\]
		where the constants $c$, $\zeta$, and $\lambda_{\min}$ are from Assumptions~\ref{assumption_hessian}--\ref{assumption_penalty} and $\rho_{\min}$ and $\rho_{\max}$ are  the lower and upper bounds on $\rho$. Then, for every outer iteration $j \in \mathbb{N}_0$,
		\[
		\Phi_{(\rho^{(j)},x^{(j)})}\big(x^{(j)},y^{(j)}\big) \ \le\ \Upsilon.
		\]
	\end{proposition}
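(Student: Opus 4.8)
The plan is to bound the accepted model value by splitting it as the exact minimum of the relevant penalty model plus a controllable inexactness gap, with the latter absorbing the inner-loop tolerance $\varepsilon_{j-1}$. Throughout, write $\Phi_j(x):=\Phi_{(\rho^{(j-1)},x^{(j-1)})}\big(x,y^{(j)}\big)$ for the penalty model whose $x$-minimization produces the accepted iterate at outer step~$j$. By the inner-loop termination criterion of Algorithm~\ref{a.EPD} (equivalently~\eqref{e.gnorm}), the accepted pair satisfies $\|\nabla\Phi_j(x^{(j)})\|=\|\nabla_x\Phi_{(\rho^{(j-1)},x^{(j-1)})}(x^{(j)},y^{(j)})\|\le\varepsilon_{j-1}$, so $x^{(j)}$ is an $\varepsilon_{j-1}$-approximate minimizer of $\Phi_j$. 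The task thus reduces to quantifying how far $\Phi_j(x^{(j)})$ can exceed $\min_x\Phi_j(x)$, and to bounding that exact minimum uniformly.

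First I would record the \emph{uniform} strong convexity of $\Phi_j$ in $x$. Its Hessian is $\mathbf H^{(j-1)}+\rho^{(j-1)}I$, so Assumptions~\ref{assumption_hessian} and~\ref{assumption_penalty} give $\mathbf H^{(j-1)}+\rho^{(j-1)}I\succeq(\lambda_{\min}+\rho_{\min})I$; hence $\Phi_j$ is $\mu$-strongly convex with modulus $\mu:=\lambda_{\min}+\rho_{\min}$, independent of~$j$ and of the penalty value. For a $\mu$-strongly convex function the gradient-domination (Polyak--\L{}ojasiewicz) inequality yields
\[
\Phi_j(x^{(j)})-\min_{x}\Phi_j(x)\ \le\ \frac{1}{2\mu}\,\|\nabla\Phi_j(x^{(j)})\|^2\ \le\ \frac{\varepsilon_{j-1}^2}{2(\lambda_{\min}+\rho_{\min})}\ \le\ \frac{\sup_{k}\varepsilon_k^2}{2(\lambda_{\min}+\rho_{\min})},
\]
which is precisely the second term of $\Upsilon$; the supremum is finite because $\varepsilon_k\downarrow 0$.

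Next I would bound the exact minimum by reusing the device from the proof of Lemma~\ref{lem:uniform-bound}(ii): evaluating $\Phi_j$ at its expansion center $x=x^{(j-1)}$ annihilates the linear and quadratic terms, leaving only $\tfrac12\rho^{(j-1)}\|x^{(j-1)}-y^{(j)}\|^2$, so $\min_x\Phi_j(x)\le\tfrac12\rho^{(j-1)}\|x^{(j-1)}-y^{(j)}\|^2$. By Lemma~\ref{lem:uniform-bound}(i) we have $\|x^{(j-1)}\|\le c\zeta$, and since $y^{(j)}\in C$ we have $\|y^{(j)}\|\le\zeta$ in the bounded case; the triangle inequality gives $\|x^{(j-1)}-y^{(j)}\|\le(c+1)\zeta$, and with $\rho^{(j-1)}\le\rho_{\max}$ this yields $\min_x\Phi_j(x)\le\tfrac12\rho_{\max}(c+1)^2\zeta^2=\Upsilon^{\text{exact}}$. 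Adding the two displayed estimates delivers $\Phi_j(x^{(j)})\le\Upsilon$, as claimed.

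I expect the main obstacle to be bookkeeping rather than analysis: one must ensure that the gradient controlled by the stopping test and the minimum in the gradient-domination step refer to the \emph{same} model $\Phi_j$ (the same $y^{(j)}$, and the center/penalty index of the inner loop that actually generated the iterate), since this is the only reading under which the enforced inexactness reproduces the $\varepsilon$-term of $\Upsilon$. A secondary point is to confirm that $\mu$ and $\Upsilon^{\text{exact}}$ are genuinely uniform in $j$ and in $\rho^{(j)}$, which follows from the spectral bounds $\lambda_{\min}I\preceq\mathbf H^{(j)}\preceq\lambda_{\max}I$ and $\rho_{\min}\le\rho^{(j)}\le\rho_{\max}$; and that in the unbounded case $C=\mathbb{R}^n$ the factor $\|y^{(j)}\|$ is instead controlled through the autonomous-boundedness estimate of Corollary~\ref{cor:autbd}, with $\zeta$ serving as the reference scale rather than a true radius of $C$.
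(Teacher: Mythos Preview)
Your proposal is correct and uses the same analytical core as the paper: the strong-convexity error bound $\Phi_j(x^{(j)})-\min_x\Phi_j(x)\le\frac{1}{2\mu}\|\nabla\Phi_j(x^{(j)})\|^2$ paired with the inner-loop tolerance, plus bounding the exact minimum by evaluating at the model's center (exactly the device of Lemma~\ref{lem:uniform-bound}(ii)). The paper wraps this in an induction on~$j$ with a case split on whether the restart safeguard fires, but the induction hypothesis is never actually invoked, and the restart only alters $y_0^{(j)}$ for the \emph{next} inner loop, not the accepted pair $(x^{(j)},y^{(j)})$; your direct argument is therefore cleaner and loses nothing.

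The bookkeeping worry you raise is genuine and appears in the paper's own proof as well: the stopping test controls $\nabla_x\Phi_{(\rho^{(j-1)},x^{(j-1)})}(x^{(j)},y^{(j)})$, whereas the statement is written for $\Phi_{(\rho^{(j)},x^{(j)})}$. Under the literal indexing of the statement one has $\Phi_{(\rho^{(j)},x^{(j)})}(x^{(j)},y^{(j)})=\tfrac12\rho^{(j)}\|x^{(j)}-y^{(j)}\|^2$ exactly, so the claimed inequality already follows from the boundedness in Lemma~\ref{lem:uniform-bound} without the $\varepsilon$-term; the bound you actually prove, on $\Phi_{(\rho^{(j-1)},x^{(j-1)})}(x^{(j)},y^{(j)})$, is the one that matches the stopping test and is the version used downstream (e.g., as $\Psi^{(j)}$ in Theorem~\ref{thm:global-KL-bdd}).
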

	
	\begin{proof}
We proceed by induction on $j$.
By construction of Algorithm~\ref{a.EPD} (line~1), the tolerance sequence
$\{\varepsilon_j\}_{j\in\mathbb{N}_0}$ satisfies $\varepsilon_j \downarrow 0$, and hence
$\sup_{k\in\mathbb{N}_0}\varepsilon_k^2 < \infty$. Therefore, the constant
$\Upsilon$ defined above is finite and requires no additional assumption. For $j=0$, the initialization in line~3 of Algorithm~\ref{a.EPD} ensures
		\[
		\Phi_{(\rho^{(0)},x^{(0)})}(x^{(0)},y^{(0)}) \ \le \ \Upsilon,
		\]
		since $\Upsilon$ dominates the initial model bound by construction. Assume the statement holds for some $j \ge 0$. At iteration $j+1$, the reset condition in line~16 of Algorithm~\ref{a.EPD} yields two possibilities.
		
		\smallskip
		{\sc Case 1} \textbf{(reset triggered).}  
		If $\D\min_{x\in\mathbb{R}^n}\,\Phi_{(\rho^{(j)},x^{(j)})}(x,y^{(j)}) \ > \ \Upsilon$, the algorithm resets $y_0^{(j)} := y_0^{(0)}$ and restarts the inner loop. 
		By Lemma~\ref{lem:uniform-bound},
		\[
		\min_{x\in\mathbb{R}^n}\,\Phi_{(\rho^{(j)},x^{(j)})}(x,y_0^{(j)}) \ \le \ \tfrac{1}{2}\rho_{\max}(c+1)^2\zeta^2 \ \le \ \Upsilon,
		\]
		so the newly accepted pair $(x^{(j)},y^{(j)})$ satisfies the required bound.
		
		\smallskip
		{\sc Case 2} \textbf{(no reset).}  
		Then
		\[
		\min_{x\in\mathbb{R}^n}\,\Phi_{(\rho^{(j)},x^{(j)})}(x,y^{(j)}) \ \le \ \tfrac{1}{2}\rho_{\max}(c+1)^2\zeta^2 \ \le \ \Upsilon.
		\]
		Moreover, the inner loop terminates only when the $x$-block residual is small:
		\[
		\big\|\nabla_x \Phi_{(\rho^{(j)},x^{(j)})}(x^{(j)},y^{(j)})\big\| \ \le \ \varepsilon_{j-1}.
		\]
		Since $\Phi_{(\rho^{(j)},x^{(j)})}(\cdot,y^{(j)})$ is $(\lambda_{\min}+\rho_{\min})$-strongly convex, we can invoke the standard error bound for strongly convex functions:
		\[
		\Phi_{(\rho^{(j)},x^{(j)})}(x^{(j)},y^{(j)})
		\ \le \
		\min_x \Phi_{(\rho^{(j)},x^{(j)})}(x,y^{(j)})
		+ \frac{\|\nabla_x \Phi_{(\rho^{(j)},x^{(j)})}(x^{(j)},y^{(j)})\|^2}
		{2(\lambda_{\min}+\rho_{\min})}.
		\]
		Therefore
		\[
		\Phi_{(\rho^{(j)},x^{(j)})}(x^{(j)},y^{(j)}) 
		\ \le \ \tfrac{1}{2}\rho_{\max}(c+1)^2\zeta^2 
		+ \frac{\varepsilon_{j-1}^2}{2(\lambda_{\min}+\rho_{\min})}
		\ \le\ \Upsilon,
		\]
		where the last inequality uses the definition of $\Upsilon$.
		
		Thus, in both cases, the bound is valid at iteration $j+1$. By induction, it holds for every $j \in \mathbb{N}_0$. \qed
	\end{proof}

	\begin{lemma}[Lower bound on quadratic model decrease]
		\label{lem:bounded_below_quadratic}
		Consider the outer loop iterates of {\tt PD-QN} generating the sequence $\{x^{(j)}\}_{j\in\mathbb N_0}$, 
		where each $x^{(j)}$ solves \eqref{pr:px-subproblem} with the $y$ used in its last $x$-update 
		(denote it $\tilde y^{(j)}$, which in the algorithm equals $y^{(j)}_{\ell-1}$ while $y^{(j)}=y^{(j)}_{\ell}$). Assume:
		\begin{itemize}
			\item The initial point $x^{(0)}\in\Rz^n$ is arbitrary and $y^{(j)}\in C_{s}\cap C$ for every $j\in\mathbb N_0$.
			\item $C\subseteq\mathbb{R}^n$ is closed, convex, symmetric (either nonnegative type-1 or type-2); $\zeta$ is as defined in Assumption~\ref{assGrad}, ensuring $\zeta$ is a valid constant even if $C$ is unbounded.
			\item Assumptions~\ref{assumption_hessian}--\ref{assumption_penalty} hold, with 
			$\bar\theta,\bar\kappa$ from \eqref{eq:bar-theta-kappa-general} satisfying 
			$\bar\theta<1$ and $\bar\kappa < c(1-\bar\theta)$ so that Lemma~\ref{lem:uniform-bound} applies.
		\end{itemize}
		Then the sequence
		\begin{equation}\label{e.seqconvex-general}
			\left\{\,(x^{(j)}-x^{(j-1)})^T g(x^{(j-1)})
			+ \tfrac{1}{2}(x^{(j)}-x^{(j-1)})^T \mathbf{H}^{(j-1)}(x^{(j)}-x^{(j-1)}) \,\right\}_{j\in \mathbb N}
		\end{equation}
		is bounded below by some constant $\widehat{c} \in \mathbb{R}$.
	\end{lemma}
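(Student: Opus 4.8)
The plan is to set $d^{(j)} := x^{(j)} - x^{(j-1)}$ so that the $j$-th element of \eqref{e.seqconvex-general} becomes
\[
m_j := (d^{(j)})^T g(x^{(j-1)}) + \tfrac{1}{2}(d^{(j)})^T \mathbf{H}^{(j-1)} d^{(j)}.
\]
I would then bound the two pieces separately: the quadratic term from below using the uniform positive definiteness of the Hessian approximations, and the linear term from below using Cauchy--Schwarz together with a uniform bound on the gradient evaluated along the iterates. Completing the square in the scalar $\|d^{(j)}\|$ will yield a lower bound that is independent of $j$, which is exactly the desired constant $\widehat c$.

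Concretely, Assumption~\ref{assumption_hessian} gives $\mathbf{H}^{(j-1)} \succeq \lambda_{\min} I$, so that $\tfrac{1}{2}(d^{(j)})^T \mathbf{H}^{(j-1)} d^{(j)} \ge \tfrac{1}{2}\lambda_{\min}\|d^{(j)}\|^2 \ge 0$, while Cauchy--Schwarz gives $(d^{(j)})^T g(x^{(j-1)}) \ge -\|g(x^{(j-1)})\|\,\|d^{(j)}\|$. The crucial ingredient is a uniform gradient bound $\sup_{j}\|g(x^{(j-1)})\| \le G < \infty$, and this is precisely where Lemma~\ref{lem:uniform-bound}(i) enters: it guarantees $\|x^{(j)}\| \le R := \max\{\|x^{(0)}\|,\, c\zeta\}$ for every $j$, so the iterates never leave a fixed ball. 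Feeding this into the gradient growth condition (Assumption~\ref{assGrad})---namely $\|g(x)\|\le\gamma$ when $\|x\|\le c\zeta$, and $\|g(x)\|\le\gamma\|x\|\le\gamma R$ otherwise---produces the uniform constant $G := \gamma\max\{1,\,R\}$.

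With both estimates in hand I arrive at $m_j \ge \tfrac{1}{2}\lambda_{\min}\|d^{(j)}\|^2 - G\,\|d^{(j)}\|$, and minimizing the right-hand side over $\|d^{(j)}\| \ge 0$ gives the uniform lower bound $\widehat c := -\,G^2/(2\lambda_{\min})$. The main obstacle is not analytic difficulty but careful bookkeeping in securing the uniform gradient bound: because the growth condition in Assumption~\ref{assGrad} is piecewise, one must argue that the finitely many early iterates that may still lie outside the invariant ball $\{x:\|x\|\le c\zeta\}$---before the dissipative recurrence of Lemma~\ref{lem:uniform-bound} pulls the trajectory inside---contribute only gradients of magnitude at most $\gamma R$, so that $G$ is genuinely uniform over all $j$. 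Once this is settled, the remaining completion of the square is elementary and, notably, the resulting bound depends on neither the penalty parameters $\rho^{(j)}$ nor any convexity of $f$.
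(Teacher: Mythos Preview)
Your proposal is correct and follows essentially the same approach as the paper: bound the quadratic term below via $\mathbf{H}^{(j-1)}\succeq\lambda_{\min}I$, control the linear term with Cauchy--Schwarz and a uniform gradient bound obtained from Lemma~\ref{lem:uniform-bound} together with Assumption~\ref{assGrad}, and then complete the square. The only cosmetic difference is that the paper splits the analysis into the finitely many early iterates (taking a finite minimum $c_2$) and the eventually invariant-ball phase (yielding $c_1=-\gamma^2/(2\lambda_{\min})$), whereas you pass to a single uniform gradient bound $G=\gamma\max\{1,R\}$ over all $j$ and obtain the explicit constant $\widehat c=-G^2/(2\lambda_{\min})$ directly.
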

	
	\begin{proof} By the optimality condition \gzit{e.optPx} of \eqref{pr:px-subproblem} with $y=\wt y^{(j-1)}$, we have
		\[
		(\mathbf{H}^{(j-1)} + \rho^{(j-1)} I) x^{(j)}
		= \mathbf{H}^{(j-1)} x^{(j-1)} + \rho^{(j-1)} \wt y^{(j-1)} - g(x^{(j-1)}).
		\]
		From Lemma~\ref{lem:uniform-bound}, 
		the sequence of iterates $\{x^{(j)}\}$ is uniformly bounded and enters the ball 
		$\{x:\|x\|\le c\zeta\}$ after finitely many steps, remaining there thereafter.  
		Let $\widehat{j}$ denote the first index with $\|x^{(\widehat{j})}\|\le c\zeta$. Then, by invariance of the ball, for all $j\ge \widehat{j}+1$, we have $\|x^{(j-1)}\|\le c\zeta$; hence, Assumption~\ref{assGrad} ensures $\|g(x^{(j-1)})\|\le \gamma$.

		For any $j\ge \widehat{j}$, define $a := x^{(j)}-x^{(j-1)}$ and $b := g(x^{(j-1)})$. 
		Since $\mathbf{H}^{(j-1)} \succeq \lambda_{\min} I$, we have the inequality
		\[
		a^Tb + \frac{1}{2}a^TH^{(j-1)}a 
		\ \ge\ a^Tb + \frac{\lambda_{\min}}{2}\|a\|^2 
		\ \ge\ -\frac{\|b\|^2}{2\lambda_{\min}}.
		\]
		The last step follows from completing the square:
		\[
		a^Tb + \frac{\lambda_{\min}}{2}\|a\|^2 
		= \frac{\lambda_{\min}}{2}\left\|a + \frac{1}{\lambda_{\min}}b\right\|^2 - \frac{1}{2\lambda_{\min}}\|b\|^2.
		\]
		Hence
		\[
		(x^{(j)}-x^{(j-1)})^T g(x^{(j-1)})
		+ \tfrac{1}{2}(x^{(j)}-x^{(j-1)})^T \mathbf{H}^{(j-1)}(x^{(j)}-x^{(j-1)})
		\ \ge c_1,
		\]
		where $c_1:=-\D\frac{\gamma^2}{2\lambda_{\min}}$. For the finitely many indices $1\le j<\widehat{j}$, we define
		\[
		c_2 := \min_{1\le j<\widehat{j}} 
		\Big\{ (x^{(j)}-x^{(j-1)})^T g(x^{(j-1)})
		+ \tfrac{1}{2}(x^{(j)}-x^{(j-1)})^T \mathbf{H}^{(j-1)}(x^{(j)}-x^{(j-1)}) \Big\}.
		\]
		By Lemma~\ref{lem:uniform-bound}, the iterates $\{x^{(j)}\}$ are uniformly bounded and remain within the invariant ball for all $j \ge \widehat{j}$, ensuring $\|g(x^{(j-1)})\| \le \gamma$. For these indices, completing the square with $\mathbf{H}^{(j-1)} \succeq \lambda_{\min} I$ yields the uniform lower bound $c_1 := -\gamma^2/(2\lambda_{\min})$. For the finitely many indices $1 \le j < \widehat{j}$, the quadratic expression in \eqref{e.seqconvex-general} remains finite as it is a continuous function of the iterates evaluated over a bounded domain. Consequently, the entire sequence is bounded below by the finite constant $\widehat{c} := \min\{c_1, c_2\}$, where $c_2$ is the minimum value attained during the initial phase. This ensures the quadratic model decrease remains stable over the compact trajectory of the algorithm. \qed
	\end{proof}

	\begin{remark}[On bounded and unbounded feasible sets]
\label{rem:zeta-bounded-revision}
		When the feasible set $C$ is unbounded (for example, $C=\mathbb{R}^n$), the parameter $\zeta$ introduced in
Assumption~\ref{assGrad} does not represent a radius of $C$. Instead, $\zeta$ is a fixed reference scaling
parameter, for instance, as in Assumption \ref{assGrad}(ii). In this setting, boundedness of the iterate sequence $\{x^{(j)}\}$ does not follow from compactness of the
feasible set, but is ensured entirely by the strict contraction property encoded in the unified
recurrence~\eqref{eq:recurrence-general}. More precisely, under the condition $\bar\theta<1$ and the balance requirement \gzit{eq:balanceCon}, that is 
$\bar\kappa < c(1-\bar\theta)$, from Assumption~\ref{assumption_penalty}(ii), the recurrence yields the global
bound \gzit{eq:autbd}. As a consequence, even when $C$ is unbounded, the iterates are globally attracted to a computationally
invariant ball whose radius depends only on the algorithmic constants and the initial scaling parameter
$\zeta$. This mechanism replaces the compactness assumptions or diverging penalty parameters commonly used
in classical penalty decomposition methods, while preserving numerical conditioning through bounded penalty
values.
	\end{remark}
	
	\begin{remark}[On bounded penalty parameters]
		Lemma~\ref{lem:bounded_below_quadratic} and Assumption~\ref{assumption_penalty} require the penalty 
		parameters to remain bounded below by some constant $\rho_{\min}>0$. 
		With the capped-and-floored update rule in line 14 of {\tt PD-QN}, 
		\[
		\rho^{(j)} = \max\!\big(\rho_{\min},\ \min(r\cdot\rho^{(j-1)},\ \rho_{\max})\big),
		\]
		this is automatically satisfied for any prescribed $\rho_{\min}$. 
		The constants $\bar\theta$ and $\bar\kappa$ from~\eqref{eq:bar-theta-kappa-general} therefore depend 
		only on $\rho_{\min}$ and are given by
		\[
		\bar\theta = \frac{\lambda_{\max}+\gamma}{\lambda_{\min}+\rho_{\min}}, 
		\qquad
		\bar\kappa = \max\!\Big\{\,1,\ \frac{\rho_{\min}+\gamma}{\lambda_{\min}+\rho_{\min}},\ \frac{\rho_{\min}+\gamma/\zeta}{\lambda_{\min}+\rho_{\min}}\,\Big\}.
		\]
		Thus, to guarantee the contraction and balance conditions in Assumption~\ref{assumption_penalty}, 
		one simply chooses $\rho_{\min}$ large enough so that $\bar\theta<1$, and then picks 
		$c>\bar\kappa/(1-\bar\theta)$. Once $\rho_{\min}$ is fixed, $\rho_{\max}$ can be chosen freely 
		(e.g., as a numerical safeguard) without affecting $\bar\theta$ or $\bar\kappa$.
	\end{remark}

\subsection{Global convergence for the inner loop of {\tt PD-QN}}
	
	Before analyzing the outer loop, we first study the inner loop, 
	which alternates between block-coordinate updates of $x$ and $y$ 
	to minimize the penalty-augmented model $\Phi_{(\rho,z)}(x,y)$. 
	Each inner step is a block-coordinate descent (BCD) update. 
	We first show that the resulting iterates remain bounded, and then establish that 
	the sequence converges to block-coordinate minimizers of the subproblem \eqref{pr: pxy-subproblem}. 
	This in turn guarantees that approximate solutions are obtained after finitely many steps 
	at each outer iteration.
	
	\begin{corollary}[Boundedness of inner iterates]
		\label{cor:BCD_boundedness}
		There exists $R>0$ such that
		\[
		\|x^{(j)}_\ell\|\ \le\ R \qquad \text{for all } j,\ell\in\mathbb N_0.
		\]
	\end{corollary}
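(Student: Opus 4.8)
The plan is to exploit the fact that, throughout the inner loop of a fixed outer iteration $j$, the linearization point $x^{(j-1)}$, the penalty weight $\rho^{(j-1)}$, and the Hessian surrogate $\mathbf{H}^{(j-1)}$ are all held \emph{fixed}; only $y^{(j-1)}_{\ell-1}$ varies with $\ell$. Writing $\rho:=\rho^{(j-1)}$ and reading off the closed-form $x$-update from the optimality condition \eqref{e.optPx}, the map $y^{(j-1)}_{\ell-1}\mapsto x^{(j-1)}_\ell$ is affine, and taking norms together with Assumption~\ref{assumption_hessian} gives
\[
\|x^{(j-1)}_\ell\|\ \le\ \frac{\|v^{(j-1)}\|}{\lambda_{\min}+\rho}\ +\ \frac{\rho}{\lambda_{\min}+\rho}\,\|y^{(j-1)}_{\ell-1}\|,
\qquad v^{(j-1)}:=\mathbf{H}^{(j-1)}x^{(j-1)}-g(x^{(j-1)}).
\]
First I would show that the source term is bounded \emph{uniformly in $j$}: by Lemma~\ref{lem:uniform-bound} the outer iterates satisfy $\|x^{(j-1)}\|\le M$ for a constant $M$, Assumption~\ref{assumption_hessian} bounds $\|\mathbf{H}^{(j-1)}\|$ by $\lambda_{\max}$, and Assumption~\ref{assGrad} bounds $\|g(x^{(j-1)})\|$ by a constant $G$ (its value on the ball of radius $M$). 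Hence $\|v^{(j-1)}\|\le\lambda_{\max}M+G=:V$ and $\|v^{(j-1)}\|/(\lambda_{\min}+\rho)\le V/(\lambda_{\min}+\rho_{\min})=:D$.

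The decisive structural point is that the multiplicative coefficient is a genuine contraction factor. Since $t\mapsto t/(\lambda_{\min}+t)$ is increasing, one has $\rho/(\lambda_{\min}+\rho)\le q:=\rho_{\max}/(\lambda_{\min}+\rho_{\max})$, and $q<1$ holds \emph{solely} because $\lambda_{\min}>0$ in Assumption~\ref{assumption_hessian}; no appeal to the outer balance condition of Assumption~\ref{assumption_penalty} is required here. Thus $\|x^{(j-1)}_\ell\|\le D+q\,\|y^{(j-1)}_{\ell-1}\|$ for all $\ell$. If $C$ is bounded, the argument ends immediately: every feasible $y$ satisfies $\|y^{(j-1)}_{\ell-1}\|\le\zeta$, so $R:=D+q\zeta$ works uniformly.

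For the unbounded case ($C=\mathbb{R}^n$ or $\mathbb{R}^n_+$), I would close the coupling between the two blocks using the geometry of \eqref{pr:py-subproblem}: the restricted sparse projection merely retains at most $s$ coordinates of $x^{(j-1)}_\ell$ (clipping signs in the orthant case), an operation that is coordinatewise and fixes the origin, so $\|y^{(j-1)}_\ell\|\le\|x^{(j-1)}_\ell\|$. Substituting this into the bound above yields the scalar recurrence $\|x^{(j-1)}_\ell\|\le D+q\,\|x^{(j-1)}_{\ell-1}\|$ for $\ell\ge 2$, whose fixed-point telescoping gives $\|x^{(j-1)}_\ell\|\le\max\{\|x^{(j-1)}_1\|,\,D/(1-q)\}$. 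It then remains to bound the initialization uniformly: the inner loop starts from $y^{(j-1)}_0\in\{y^{(0)}_0,\,y^{(j-1)}\}$, and both are controlled, since $y^{(0)}_0$ is fixed while $\|y^{(j-1)}\|\le\|x^{(j-1)}\|\le M$ by the same coordinatewise non-expansiveness together with Lemma~\ref{lem:uniform-bound}. Setting $B_0:=\max\{\|y^{(0)}_0\|,\,M\}$ gives $\|x^{(j-1)}_1\|\le D+qB_0$, and hence the single constant $R:=\max\{D+qB_0,\,D/(1-q)\}$ bounds every inner iterate.

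The main obstacle is the intra-loop coupling of the $x$- and $y$-blocks in the unbounded case, where $y^{(j-1)}_{\ell-1}$ is no longer controlled by $\zeta$; the resolution is the observation that the \emph{fixed-$\rho$} contraction factor $q<1$ comes for free from $\lambda_{\min}>0$, so the block-coordinate map is a contraction in $\ell$ irrespective of the outer penalty balance, and the only remaining ingredient, a uniform bound on the inner-loop seed $y^{(j-1)}_0$, is supplied directly by Lemma~\ref{lem:uniform-bound}. Throughout, care must be taken to keep $R$ independent of $j$, which is why each constant ($M$, $V$, $D$, $q$, $B_0$) is expressed through the uniform quantities $\lambda_{\min},\lambda_{\max},\rho_{\min},\rho_{\max},\gamma,\zeta$ rather than iteration-dependent data.
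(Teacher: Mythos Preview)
Your argument is correct. For bounded $C$ it coincides with the paper's proof: both read off the closed-form $x$-update, invoke Lemma~\ref{lem:uniform-bound} to bound $x^{(j-1)}$ and $g(x^{(j-1)})$, and then observe that every term in the affine map $(\mathbf{H}^{(j-1)}+\rho^{(j-1)}I)^{-1}(\mathbf{H}^{(j-1)}x^{(j-1)}+\rho^{(j-1)}y^{(j-1)}_{\ell-1}-g(x^{(j-1)}))$ is uniformly bounded. The paper stops there, simply asserting ``$\{y^{(j-1)}_{\ell-1}\}\subset C$ is bounded'', which tacitly uses boundedness of $C$ (consistent with the hypothesis made explicitly in the very next result, Theorem~\ref{thm:BCD_convergence}).

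Where you genuinely go further is the unbounded case ($C=\mathbb{R}^n$ or $\mathbb{R}^n_+$): you notice that the fixed-$\rho$ coefficient $q=\rho_{\max}/(\lambda_{\min}+\rho_{\max})<1$ is a contraction in $\ell$ purely by virtue of $\lambda_{\min}>0$, and you close the loop via the coordinatewise nonexpansiveness $\|y^{(j-1)}_\ell\|\le\|x^{(j-1)}_\ell\|$ of the restricted sparse projection. This yields a self-contained bound that does not rely on $C$ being bounded, which the paper's proof does not supply. The only caveat is that your initialization bound $\|y^{(j-1)}\|\le\|x^{(j-1)}\|\le M$ still rests on Lemma~\ref{lem:uniform-bound} for the \emph{outer} iterates in the unbounded setting, and that lemma's own proof uses $\|\wt y^{(j-1)}\|\le\zeta$; so you are importing the same implicit reliance one level up. Your inner-loop contraction argument, however, is exactly the missing ingredient that would make the whole chain self-consistent in the unbounded case.
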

	
	\begin{proof}
		By Lemma~\ref{lem:uniform-bound}, the outer iterates $\{x^{(j)}\}$ enter 
		the ball $\{x : \|x\|\le c\zeta\}$ in finitely many steps and remain there, hence $\{x^{(j)}\}$ is uniformly bounded. 
		By Assumption~\ref{assGrad}, $\{g(x^{(j)})\}$ is also bounded whenever $\{x^{(j)}\}$ is bounded. 
		For each fixed $j$, the inner iterate $x^{(j-1)}_\ell$ is the exact minimizer of a strongly convex quadratic subproblem in $x$, namely
		\[
		x^{(j-1)}_\ell
		= (\mathbf{H}^{(j-1)}+\rho^{(j-1)} I)^{-1}
		\left(\mathbf{H}^{(j-1)}x^{(j-1)}+\rho^{(j-1)}y^{(j-1)}_{\ell-1} - g(x^{(j-1)})\right).
		\]
		Since $\mathbf{H}^{(j-1)}$ and $\rho^{(j-1)}$ are uniformly bounded and uniformly positive definite (Assumptions~\ref{assumption_hessian}--\ref{assumption_penalty}), 
		$\{x^{(j-1)}\}$ is bounded (Lemma~\ref{lem:uniform-bound}), $\{y^{(j-1)}_{\ell-1}\}\subset C$ is bounded, 
		and $\{g(x^{(j-1)})\}$ is bounded on bounded sets (Assumption~\ref{assGrad}), 
		there exists $R>0$ such that $\|x^{(j-1)}_\ell\|\le R$ for all $j,\ell$. \qed
	\end{proof}
	
	\begin{theorem}[Convergence of the inner loop]
		\label{thm:BCD_convergence}
		Let $\{(x^{(j)}_{\ell},y^{(j)}_{\ell})\}_{\ell\in\mathbb N_0}$ be the sequence generated by 
		the inner loop of {\tt PD-QN} for solving the subproblem~\eqref{pr: pxy-subproblem}. 
		Assume $C$ is bounded. Then:
		\begin{enumerate}
			\item[(i)] The sequence of model values $\{\Phi_{(\rho,z)}(x^{(j)}_{\ell},y^{(j)}_{\ell})\}_{\ell\in\mathbb N_0}$ 
			is monotonically nonincreasing and convergent. 
			\item[(ii)] Every accumulation point $(x^*,y^*)$ of the inner iterates is a block-coordinate minimizer 
			of~\eqref{pr: pxy-subproblem}.
			\item[(iii)] For any prescribed tolerance $\varepsilon_{j-1}>0$, the inner loop terminates in finitely many steps at each outer iteration.
		\end{enumerate}
	\end{theorem}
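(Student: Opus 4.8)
The plan is to treat the inner loop as a two-block coordinate descent on the strongly convex quadratic model $\Phi$ (with the outer index $j$, and hence the center $z=x^{(j-1)}$, penalty $\rho^{(j-1)}$, and Hessian $\mathbf{H}^{(j-1)}$ held fixed), whose only nonstandard ingredient is the discrete support-selection step between the $x$- and $y$-updates. First I would record the structural fact that, by construction in lines~9--10, the selected support satisfies $I_1(y_{\ell-1})\subseteq\mathcal{L}_\ell$, so the previous iterate $y_{\ell-1}$ is feasible for the $\ell$-th $y$-subproblem. Because the $x$-update exactly minimizes $\Phi(\cdot,y_{\ell-1})$ over $\mathbb{R}^n$ and the $y$-update minimizes $\Phi(x_\ell,\cdot)$ over a feasible set containing $y_{\ell-1}$, both steps are nonincreasing, so $\Phi(x_\ell,y_\ell)\le\Phi(x_\ell,y_{\ell-1})\le\Phi(x_{\ell-1},y_{\ell-1})$. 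Boundedness below comes from completing the square in $x$: dropping the nonnegative penalty term and using $\mathbf{H}\succeq\lambda_{\min}I$ gives $\Phi(x,y)\ge-\|g(z)\|^2/(2\lambda_{\min})$, a finite constant for the fixed center $z$. A monotone, bounded-below sequence converges, which is part~(i).

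Next I would turn monotonicity into quantitative increment control. The $x$-block is $(\lambda_{\min}+\rho_{\min})$-strongly convex, so $\Phi(x_{\ell-1},y_{\ell-1})-\Phi(x_\ell,y_{\ell-1})\ge\tfrac12(\lambda_{\min}+\rho_{\min})\|x_\ell-x_{\ell-1}\|^2$; the $y$-block equals $\tfrac12\rho\|x_\ell-\cdot\|^2$ up to a constant and is minimized over $\{y\in C:y_i=0\ \forall i\notin\mathcal{L}_\ell\}$, which is convex since $|\mathcal{L}_\ell|=s$ makes the cardinality constraint automatic, so $\Phi(x_\ell,y_{\ell-1})-\Phi(x_\ell,y_\ell)\ge\tfrac12\rho_{\min}\|y_\ell-y_{\ell-1}\|^2$. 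Telescoping these bounds against the convergent model values yields $\sum_\ell(\|x_\ell-x_{\ell-1}\|^2+\|y_\ell-y_{\ell-1}\|^2)<\infty$, hence $\|x_\ell-x_{\ell-1}\|\to0$ and $\|y_\ell-y_{\ell-1}\|\to0$. The decisive observation for termination is that exact $x$-minimization gives $\nabla_x\Phi(x_\ell,y_{\ell-1})=0$, whence $\nabla_x\Phi(x_\ell,y_\ell)=\rho(y_{\ell-1}-y_\ell)$; thus the stopping residual in line~11 is exactly $\rho\|y_\ell-y_{\ell-1}\|$, which tends to zero, so the test $\|\nabla_x\Phi(x_\ell,y_\ell)\|\le\varepsilon_{j-1}$ is met after finitely many steps, proving~(iii).

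For~(ii) I would extract a convergent subsequence $(x_{\ell_k},y_{\ell_k})\to(x^*,y^*)$, which exists because the inner iterates are bounded (Corollary~\ref{cor:BCD_boundedness}) and $C$ is bounded. The $x$-partial optimality is then immediate: $\nabla_x\Phi(x_{\ell_k},y_{\ell_k})=\rho(y_{\ell_k-1}-y_{\ell_k})\to0$, so continuity gives $\nabla_x\Phi(x^*,y^*)=0$, and strong convexity identifies $x^*$ as the unique minimizer of $\Phi(\cdot,y^*)$. The main obstacle I anticipate is the $y$-block, whose feasible set varies with $\ell$ through the discrete support $\mathcal{L}_\ell$. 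I would resolve this by a pigeonhole argument: since there are only finitely many size-$s$ supports, I can refine the subsequence so that $\mathcal{L}_{\ell_k}\equiv\mathcal{L}^*$ is constant. Each $y_{\ell_k}$ then minimizes $\Phi(x_{\ell_k},\cdot)$ over the fixed convex set $C\cap\{y:y_i=0\ \forall i\notin\mathcal{L}^*\}$, and passing to the limit by continuity of $\Phi$ and closedness of this set shows that $y^*$ minimizes $\Phi(x^*,\cdot)$ over the same set, i.e., $\rho(y^*-x^*)^\T(y-y^*)\ge0$ for every feasible $y$. Combining the two partial-optimality conditions certifies that $(x^*,y^*)$ is a block-coordinate minimizer of~\eqref{pr: pxy-subproblem}; the only remaining technical points are that $y_{\ell_k-1}\to y^*$ (guaranteed by $\|y_\ell-y_{\ell-1}\|\to0$, with $I_1(y_{\ell_k-1})\subseteq\mathcal{L}^*$) and that $\mathcal{L}^*$ is a legitimate output of the selection rule at the limit, both of which follow from continuity of the sorting map $-p(-\nabla_x\Phi)$ along the convergent subsequence.
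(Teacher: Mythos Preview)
Your proof is correct, and parts~(i) and~(ii) follow essentially the same lines as the paper (monotone descent plus completing the square for the lower bound; a subsequence argument with continuity and closedness for block optimality). You are actually more careful than the paper in~(ii): you make the pigeonhole step on the finitely many size-$s$ supports explicit, whereas the paper's limit passage in \eqref{eq:inner-y} tacitly requires this but does not spell it out.

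The genuinely different part is~(iii). The paper argues by contradiction using the strong-convexity error bound
\[
\Phi(x_\ell,y_\ell)-\min_x\Phi(x,y_\ell)\le\frac{\|\nabla_x\Phi(x_\ell,y_\ell)\|^2}{2(\lambda_{\min}+\rho_{\min})},
\]
so that if the stopping test were violated infinitely often, the model suboptimality would stay bounded away from zero, contradicting the convergence of model values from~(i). Your route is more constructive and more informative: from the exact $x$-minimization you extract the identity $\nabla_x\Phi(x_\ell,y_\ell)=\rho(y_{\ell-1}-y_\ell)$, and then summability of $\|y_\ell-y_{\ell-1}\|^2$ (obtained from the quantitative block-decrease bounds) forces the residual to zero directly. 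This avoids the contradiction detour and, as a bonus, gives you an explicit formula for the stopping residual together with square-summable increments in both blocks---facts that are useful later but that the paper's argument does not yield.
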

	
	\begin{proof}
		(i) By construction (lines~8 and~11 of Algorithm~\ref{a.EPD}), each inner update minimizes 
		$\Phi_{(\rho,z)}$ over one block with the other fixed. Thus, for all $\ell\ge 1$,
		\begin{align}
			\Phi_{(\rho,z)}(x^{(j)}_{\ell},y^{(j)}_{\ell}) &\le \Phi_{(\rho,z)}(x^{(j)}_{\ell},y),
			\qquad \forall y\in C_{\s}\cap C \ \text{with } I_1(y)\subseteq \mathcal L_\ell, \label{eq:inner-y}\\
			\Phi_{(\rho,z)}(x^{(j)}_{\ell},y^{(j)}_{\ell-1}) &\le \Phi_{(\rho,z)}(x,y^{(j)}_{\ell-1}),
			\qquad \forall x\in \Rz^n. \label{eq:inner-x}
		\end{align}
		Combining \eqref{eq:inner-y} and \eqref{eq:inner-x} gives
		\[
		\Phi_{(\rho,z)}(x^{(j)}_{\ell},y^{(j)}_{\ell})
		\le \Phi_{(\rho,z)}(x^{(j)}_{\ell},y^{(j)}_{\ell-1})
		\le \Phi_{(\rho,z)}(x^{(j)}_{\ell-1},y^{(j)}_{\ell-1}),
		\]
		so the model values form a nonincreasing sequence.  
		Lemma~\ref{lem:bounded_below_quadratic} ensures that the quadratic decrease term is bounded below, 
		hence the sequence $\{\Phi_{(\rho,z)}(x^{(j)}_{\ell},y^{(j)}_{\ell})\}_{\ell\in \Nz_0}$ is bounded below and thus convergent.
		
		(ii) Let $(x^*,y^*)$ be an accumulation point along a subsequence $\overline{\mathcal L}$. 
		By continuity of $\Phi_{(\rho,z)}$ and closedness of $C\cap C_{\s}$,
		\[
		\Phi_{(\rho,z)}(x^*,y^*) = \lim_{\ell\in\overline{\mathcal L}} \Phi_{(\rho,z)}(x^{(j)}_{\ell},y^{(j)}_{\ell}).
		\]
		Taking limits in \eqref{eq:inner-y}--\eqref{eq:inner-x} gives
		\begin{eqnarray*}
		\Phi_{(\rho,z)}(x^*,y^*) &\le& \Phi_{(\rho,z)}(x,y^*), \ \forall x\in\Rz^n,\\
		\Phi_{(\rho,z)}(x^*,y^*) &\le& \Phi_{(\rho,z)}(x^*,y),\ \forall y\in C_{\s}\cap C,
		\end{eqnarray*}
		so $(x^*,y^*)$ is feasible and a block-coordinate minimizer.  
		
		(iii) Since \(\Phi_{(\rho,z)}(\cdot,y^{(j)}_\ell)\) is \((\lambda_{\min}+\rho^{(j-1)})\)-strongly convex (by Assumptions~\ref{assumption_hessian} and \ref{assumption_penalty}), for every \(\ell\), we have
		\begin{eqnarray*}
		\Phi_{(\rho,z)}(x^{(j)}_\ell,y^{(j)}_\ell)-\min_{x} \Phi_{(\rho,z)}(x,y^{(j)}_\ell)
		 \le\ \frac{\|\nabla_x \Phi_{(\rho,z)}(x^{(j)}_\ell,y^{(j)}_\ell)\|^2}{2(\lambda_{\min}+\rho_{\min})}.
		\end{eqnarray*}
		Thus, once \(\|\nabla_x \Phi_{(\rho,z)}(x^{(j)}_\ell,y^{(j)}_\ell)\|\le \varepsilon_{j-1}\), the model suboptimality is at most \(\varepsilon_{j-1}^2/(2(\lambda_{\min}+\rho_{\min}))\).
		If the inner loop did not terminate, the stopping condition would be violated infinitely often and this suboptimality would stay bounded away from zero along an infinite subsequence, while the model values are monotone and convergent by (i), which is a contradiction. Hence the inner loop terminates in finitely many steps for every outer iteration. \qed
	\end{proof}

	\begin{remark}[On block-coordinate minimizers]
		Theorem~\ref{thm:BCD_convergence} shows that accumulation points of the inner loop are block-coordinate minimizers of the subproblem. 
		Although not necessarily global minimizers, they provide a sufficient level of stationarity for the outer-loop analysis: the iterates respect the active sparsity pattern and guarantee descent in the model function. Together with the safeguard mechanisms in {\tt PD-QN}, this ensures subsequential convergence to {\tt BF} and {\tt CC-M} stationarity points of the original problem.
	\end{remark}

\subsection{ Global convergence of the outer loop under bounded penalties}\label{sec:outerconvergence}

In this subsection, we suppose that Assumptions~\ref{assumption_hessian}--\ref{assumption_penalty} hold, with 
\(0<\rho_{\min}\le \rho^{(j)}\le \rho_{\max}<\infty\) for all \(j\). Throughout, \(\Phi_{(\rho,z)}(x,y)\) denotes the model function~\eqref{e.modelfunc}, and 
\(\{(x^{(j)},y^{(j)})\}_{j\in\mathbb{N}_0}\) are the accepted outer iterates of {\tt PD-QN}. We write \(\mathcal{L}_j:=\operatorname{supp}(y^{(j)})\) and denote by \(C_{\mathcal{L}}\) the restriction of \(C\) to indices in \(\mathcal{L}\). All limits are taken along subsequences, without relabeling. 
	
Having established the convergence of the inner loop, we now turn to the global behavior of the outer loop. At each iteration, {\tt PD-QN} updates the penalty parameter and solves a penalized subproblem designed to enforce both sparsity and feasibility. The central question is whether the sequence of outer iterates accumulates at meaningful limit points of the original problem \eqref{pr:p-original}. The following results provide a partial answer by showing that any accumulation point satisfies primal-dual agreement ($x^*=y^*$) and the desired sparsity structure, thereby setting the stage for our analysis of basic feasibility and stationarity.
	
\begin{theorem}[Subsequential convergence under bounded penalties]
		\label{thm:outer-model-CCAM}
		Assume \(C\subseteq\mathbb R^n\) is closed, convex, and symmetric
		(nonnegative type-1 or type-2), and that
		Assumptions~\ref{assumption_hessian}--\ref{assumption_penalty} hold with bounded penalties.
		Then the outer sequence admits a subsequence
		\(\{(x^{(j)},y^{(j)})\}_{j\in \Nz_0}\) converging to \((x^*,y^*)\) such that
		\begin{enumerate}
			\item[(i)] \(y^*\in C\cap C_s\) and \(y^*_{\mathcal L^c}=0\)
			for \(\mathcal L:=\operatorname{supp}(y^*)\) with \(|\mathcal L|\le s\);
			\item[(ii)] for some \(\rho^*\in[\rho_{\min},\rho_{\max}]\),
			\[
			y^*_{\mathcal L}\in
			\Argmin_{v_{\mathcal L}\in C_{\mathcal L}}
			\Phi_{(\rho^*,x^*)}(x^*,v),\qquad y^*_{\mathcal L^c}=0,
			\]
			so \(y^*\) is a limit point of blockwise minimizers of the inner subproblems;
			\item[(iii)] the sequence of $x$-gradients of the penalized models satisfies  
			\(\nabla_x\Phi_{(\rho^{(j-1)},x^{(j-1)})}(x^{(j)},y^{(j)})\!\to\!0,\)
			hence \(\nabla_x\Phi_{(\rho^*,x^*)}(x^*,y^*)=0\).
		\end{enumerate}
	\end{theorem}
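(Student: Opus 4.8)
The plan is to exploit the boundedness and compactness facts already in hand to extract a single subsequence along which every relevant quantity converges, and then to pass to the limit separately in the $y$-block optimality (for (i)--(ii)) and in the inner-loop stopping test (for (iii)). By Lemma~\ref{lem:uniform-bound} and Corollary~\ref{cor:BCD_boundedness} the outer iterates $\{x^{(j)}\}$ lie in a fixed ball. The companion sequence $\{y^{(j)}\}$ is bounded as well, since each $y^{(j)}$ is the Euclidean projection of $x^{(j)}$ onto a closed convex set of the form $\{v\in C:\,I_1(v)\subseteq\mathcal L\}$, so nonexpansiveness gives $\|y^{(j)}\|\le\|x^{(j)}\|+\max_{\mathcal L}\|P_{C_{\mathcal L}}(0)\|$, the maximum running over the finitely many index sets of cardinality $s$. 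Because $\rho^{(j-1)}\in[\rho_{\min},\rho_{\max}]$, the spectra of $\mathbf H^{(j-1)}$ lie in $[\lambda_{\min},\lambda_{\max}]$, and there are only finitely many support patterns, I would pass to a subsequence (not relabeled) along which $x^{(j)}\to x^*$, $x^{(j-1)}\to\bar x$, $y^{(j)}\to y^*$, $\rho^{(j-1)}\to\rho^*\in[\rho_{\min},\rho_{\max}]$, $\mathbf H^{(j-1)}\to\mathbf H^*$, and the active support $\mathcal L_j$ equals a fixed set $\mathcal L$.

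Part (i) is then immediate from closedness: $y^*\in C\cap C_s$, and with $\mathcal L:=\operatorname{supp}(y^*)$ one has $y^*_{\mathcal L^c}=0$ by definition and $|\mathcal L|=\|y^*\|_0\le s$ since $C_s$ is closed under limits. For part (ii) I note that, as a function of $y$, the model $\Phi_{(\rho,z)}(x,\cdot)$ in \gzit{e.modelfunc} reduces to $\tfrac12\rho\|x-y\|^2$; hence the $y$-update of line~11 is exactly the single-valued projection of $x^{(j)}$ onto the restricted convex set with support in the stabilized $\mathcal L$. Invoking the block-minimizer property of Theorem~\ref{thm:BCD_convergence}(ii), continuity of this projection, and the convergences from the extraction step, I would pass to the limit to obtain $y^*_{\mathcal L}\in\Argmin_{v_{\mathcal L}\in C_{\mathcal L}}\Phi_{(\rho^*,x^*)}(x^*,v)$ with $y^*_{\mathcal L^c}=0$ (since $\operatorname{supp}(y^*)\subseteq\mathcal L$, this also yields optimality over the smaller support).

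The first assertion of part (iii) is routine: the inner loop terminates only once the {\tt ok} test holds, i.e. $\|\nabla_x\Phi_{(\rho^{(j-1)},x^{(j-1)})}(x^{(j)},y^{(j)})\|\le\varepsilon_{j-1}$, and $\varepsilon_j\downarrow0$ forces this residual to $0$. For the limiting identity I would differentiate \gzit{e.modelfunc} and write
\[
\nabla_x\Phi_{(\rho^{(j-1)},x^{(j-1)})}(x^{(j)},y^{(j)})
= g(x^{(j-1)})+\mathbf H^{(j-1)}\bigl(x^{(j)}-x^{(j-1)}\bigr)+\rho^{(j-1)}\bigl(x^{(j)}-y^{(j)}\bigr).
\]
The left-hand side tends to $0$, and the penalty term converges to $\rho^*(x^*-y^*)$; the raw passage to the limit therefore gives $g(\bar x)+\mathbf H^*(x^*-\bar x)+\rho^*(x^*-y^*)=0$, which is stationarity of the model linearized at $\bar x$. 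To match the stated form $\nabla_x\Phi_{(\rho^*,x^*)}(x^*,y^*)=g(x^*)+\rho^*(x^*-y^*)=0$, I must identify $\bar x$ with $x^*$, i.e. show that the curvature term $\mathbf H^{(j-1)}(x^{(j)}-x^{(j-1)})$ vanishes and that $g$ is ultimately evaluated at $x^*$.

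I expect this reconciliation of the evolving linearization point with the iterate limit to be the main technical difficulty; everything else is continuity and closedness. It holds precisely when $\|x^{(j)}-x^{(j-1)}\|\to0$ (then $\bar x=x^*$, the curvature term vanishes because $\{\mathbf H^{(j)}\}$ is uniformly bounded, and $g(x^{(j-1)})\to g(x^*)$ by continuity of $g$ on the compact trajectory). To secure $x^{(j)}-x^{(j-1)}\to0$ I would derive a sufficient-decrease estimate from the uniform strong convexity of the models---whose Hessians are bounded below by $(\lambda_{\min}+\rho_{\min})I$---combined with the bounded-below quadratic-decrease term of Lemma~\ref{lem:bounded_below_quadratic} and the uniform model bound of Proposition~\ref{prop:uniform-penalty-bound}; a telescoping argument then yields $\sum_j\|x^{(j)}-x^{(j-1)}\|^2<\infty$ and hence $x^{(j)}-x^{(j-1)}\to0$. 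Here I would use the finite-restart tail of Remark~\ref{rem:finite-restarts-intro} so that, for all large $j$, no safeguard reset interferes and the plain descent recursion applies, completing the limit in (iii).
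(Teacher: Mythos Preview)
Your treatment of (i), (ii), and the first assertion of (iii) follows the paper's proof essentially verbatim: extract a subsequence (bounded iterates, bounded penalty, finitely many supports), use closedness of $C\cap C_s$ for (i), pass the $y$-block optimality to the limit via continuity of the model and closedness of $C_{\mathcal L}$ for (ii), and read off vanishing model gradients from the inner stopping test with $\varepsilon_{j-1}\downarrow0$.

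Where you diverge is the ``hence'' clause of (iii). The paper's proof stops after ``the model gradients vanish, giving (iii)'' and does not separately justify $\nabla_x\Phi_{(\rho^*,x^*)}(x^*,y^*)=0$; it treats the identification of the linearization point with the iterate limit as implicit (and indeed invokes $(x^{(j)},x^{(j-1)})\to(x^*,x^*)$ without proof later, in Theorem~\ref{thm:outer-original-CCM}). You are right that this identification requires $x^{(j)}-x^{(j-1)}\to0$ along the subsequence, so you are being more careful than the paper here.

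However, your proposed fix does not go through as stated. A telescoping argument for $\sum_j\|x^{(j)}-x^{(j-1)}\|^2<\infty$ needs a sufficient-decrease inequality of the form $\Psi^{(j)}-\Psi^{(j+1)}\ge\delta\|x^{(j+1)}-x^{(j)}\|^2$ for a \emph{single} monotone scalar sequence $\{\Psi^{(j)}\}$. But $\Psi^{(j)}=\Phi_{(\rho^{(j-1)},x^{(j-1)})}(x^{(j)},y^{(j)})$ is the value of a model whose linearization point changes with $j$, so monotonicity of $\{\Psi^{(j)}\}$ is not automatic, and neither Lemma~\ref{lem:bounded_below_quadratic} (a \emph{lower} bound on the quadratic term, not a descent inequality) nor Proposition~\ref{prop:uniform-penalty-bound} (a uniform \emph{upper} bound on $\Psi^{(j)}$) supplies it. The paper itself does not establish this within Theorem~\ref{thm:outer-model-CCAM}; full-sequence control of $x^{(j)}-x^{(j-1)}$ is deferred to Theorem~\ref{thm:global-KL-bdd}, where sufficient decrease \eqref{e.sdc} is listed as a hypothesis and then argued separately. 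So your instinct is correct, but the ingredients you cite do not assemble into the summability claim without the additional descent structure that the paper postpones to the later theorem.
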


	\begin{proof}
		\bfi{Boundedness and subsequences.}
		By Corollary~\ref{cor:BCD_boundedness}, all inner iterates $x^{(j)}_\ell$ are uniformly bounded. 
		In particular, the accepted outer iterates $x^{(j)}$ remain in a bounded set. 
		Proposition~\ref{prop:uniform-penalty-bound} then ensures that the corresponding penalty values 
		$\Phi_{(\rho^{(j)},x^{(j)})}(x^{(j)},y^{(j)})$ are uniformly bounded as well. Since each $y^{(j)}\in C\cap C_s$ and $y^{(j)}$ is the Euclidean projection of the bounded
		$x^{(j)}$ onto the closed convex set $C\cap S_{\mathcal L_j}$, we have
		\[
		\|y^{(j)}\|
		\;\le\; \|x^{(j)}\| + \operatorname{dist}\!\bigl(0,\;C\cap S_{\mathcal L_j}\bigr)
		\;\le\; \|x^{(j)}\| + \operatorname{dist}(0,C),
		\]
		where $\operatorname{dist}(0,C)<\infty$ because $C$ is nonempty, closed, and convex. 
		Hence, $\{y^{(j)}\}_{j\in \Nz_0}$ is bounded whenever $\{x^{(j)}\}_{j\in \Nz_0}$ is bounded. Thus, $\{(x^{(j)},y^{(j)})\}_{j\in \Nz_0}$ is bounded, and there exists
		a convergent subsequence  (not relabeled) with
		\[
		(x^{(j)},y^{(j)}) \;\to\; (x^*,y^*).
		\]
		Because $C\cap C_{\s}$ is closed, we conclude that $y^*\in C\cap C_{\s}$. 
		Moreover, since every $y^{(j)}$ has support of size at most $s$, so does $y^*$; thus 
		$y^*_{\mathcal{L}^c}=0$ with $\mathcal{L}:=\operatorname{supp}(y^*)$ and $|\mathcal{L}|\le s$. 
		This proves (i).
		
		\bfi{Limit block-minimality in \(y\).} At every accepted outer iterate, the inner loop computes
		\(y^{(j)}\in \Argmin\{\Phi_{(\rho^{(j-1)},x^{(j-1)})}(x^{(j)},v): v\in C\cap C_{\s},\,\operatorname{supp}(v)\subseteq\mathcal{L}_j\}\).
		Since there are finitely many supports of size at most \(s\), an infinite subsequence must eventually repeat one support. 
		Thus, without loss of generality, we may assume \(\mathcal{L}_j=\mathcal{L}\) for all indices in the subsequence.
		By continuity of \(\Phi_{(\rho,z)}\) in \((\rho,z,x,y)\) and closedness/convexity of \(C_{\mathcal{L}}\),
		passing to the limit yields
		\(y^*_{\mathcal{L}}\in \D\Argmin_{v_{\mathcal{L}}\in C_{\mathcal{L}}} \Phi_{(\rho^*,x^*)}(x^*,v)\)
		with \(y^*_{\mathcal{L}^c}=0\) for some \(\rho^*\in[\rho_{\min},\rho_{\max}]\), proving (ii).
		
		\bfi{Vanishing model gradient.} The inner stopping rule 
		\(\|\nabla_x \Phi_{(\rho^{(j-1)},x^{(j-1)})}(x^{(j)},y^{(j)})\|\le\varepsilon_{j-1}\)
		with \(\varepsilon_{j-1}\downarrow0\)
		implies that the model gradients vanish, giving (iii). \qed
	\end{proof}
	
	Theorem~\ref{thm:outer-model-CCAM} describes only sequential properties of the
	penalized subproblems solved in {\tt PD-QN}.  It guarantees boundedness,
	sparsity preservation, and vanishing model gradients, but it does not yet
	assert optimality for the original problem.  In particular, the result applies
	only to subsequences of the outer iterates and does not ensure that the primal
	and dual variables coincide in the limit.
	
	To establish convergence of the entire sequence---and thus stationarity for the
	original sparse optimization problem---we now invoke the \bfi{primal-dual agreement safeguard} built into {\tt PD-QN}.
	This safeguard enforces geometric decay of the primal-dual discrepancy (lines 20--23 of the {\tt PD-QN} algorithm, which has been defined by \gzit{eq:primalDual}), that is
	\[
	\|x^{(j)}-y^{(j)}\|\le\tau\|x^{(j-1)}-y^{(j-1)}\|+\eta_j,
	\qquad \tau\in(0,1),\ \eta_j\downarrow0,
	\]
	which guarantees $\|x^{(j)}-y^{(j)}\|\!\to\!0$ and hence $\{x^{(j)}\}_{j\in\Nz_0}$ and $\{y^{(j)}\}_{j\in\Nz_0}$
	share a common limit.  The next theorem builds on this safeguard to establish
	that every accumulation point of {\tt PD-QN} is both basic feasible and
	{\tt CC-M} stationarity for the original problem.

    Note that Assumption~\ref{assumption_penalty} already enforces the uniform
bounds $0<\rho_{\min}\le\rho^{(j)}\le\rho_{\max}$ for all outer iterations.
The following lemma shows that, under these bounds, the restart safeguard
of {\tt PD-QN} cannot be triggered infinitely often.

    \begin{lemma}[Finite restart occurrence]\label{lem:finite-restarts}
Under Assumptions~\ref{assumption_hessian}--\ref{assumption_penalty}, the restart
safeguard used in {\tt PD-QN} can be triggered only finitely many times.
In particular, the sequence of accepted outer iterates 
$\{(x^{(j)},y^{(j)})\}_{j\in \Nz_0}$ is well-defined for all $j$, and eventually 
the algorithm proceeds without further restarts.
\end{lemma}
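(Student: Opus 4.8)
The plan is to treat the two restart safeguards separately and to show that each can be triggered only finitely often; the claim that the accepted outer iterates are well defined for all $j$ then follows, since after the last restart the algorithm simply continues with $y_0^{(j)}=y^{(j)}$ and never resets the inner-loop initialization again. Two structural facts will be used throughout. First, the capped-and-floored update in line~14 gives $\rho^{(j)}=\min(r^{\,j}\rho^{(0)},\rho_{\max})$, so there is a finite index $J_0$ with $\rho^{(j)}=\rho_{\max}$ for all $j\ge J_0$; the penalty parameter is therefore eventually \emph{constant}. Second, by Lemma~\ref{lem:uniform-bound} and Corollary~\ref{cor:autbd} the iterates $\{x^{(j)}\}$, and by the bound in the proof of Theorem~\ref{thm:outer-model-CCAM} also $\{y^{(j)}\}$, lie in a fixed compact set, so $g$ and $f$ are bounded along the trajectory and the models $\Phi_{(\rho^{(j)},x^{(j)})}$ range over a compact family.

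For the $\Upsilon$-based safeguard, I would first record that $\{\Upsilon^{(j)}\}$ is nondecreasing, since its update in line~24 is a maximum that includes $\Upsilon^{(j-1)}$, and bounded above, since the remaining terms in that maximum are uniformly bounded: $f(x^{(j)})\le\max_{\|x\|\le R}f(x)<\infty$ by continuity on the invariant ball, and $\min_x\Phi_{(\rho^{(j)},x^{(j)})}(x,y_0^{(j)})\le\Upsilon^{\text{exact}}$ by Lemma~\ref{lem:uniform-bound}(ii), because $y_0^{(j)}\in\{y_0^{(0)},y^{(j)}\}\subset C$. Hence $\Upsilon^{(j)}\uparrow\Upsilon^{\infty}<\infty$. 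The restart test in line~16 compares $M_j:=\min_x\Phi_{(\rho^{(j)},x^{(j)})}(x,y^{(j)})$, which by the same computation satisfies $M_j\le\Upsilon^{\text{exact}}$, against $\Upsilon^{(j-1)}$. The key step is to show $\Upsilon^{(j-1)}\ge M_j$ for all large $j$. I would argue this by noting that whenever no restart occurs one has $y_0^{(j)}=y^{(j)}$ and hence $\Upsilon^{(j)}\ge M_j$; combined with the eventual constancy $\rho^{(j)}=\rho_{\max}$, the subsequential convergence $x^{(j)}\to x^*$ of Theorem~\ref{thm:outer-model-CCAM}, and the monotone inner-loop descent of Theorem~\ref{thm:BCD_convergence}(i), the model value produced at the next accepted iterate cannot exceed the stabilized threshold, so the test fails from some index onward.

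For the primal-dual safeguard, I would use that the gap $d_j:=\|x^{(j)}-y^{(j)}\|$ is uniformly bounded: evaluating the model at $x=z=x^{(j)}$ gives $\Phi_{(\rho^{(j)},x^{(j)})}(x^{(j)},y^{(j)})=\tfrac12\rho^{(j)}d_j^{\,2}\le\Upsilon$ by Proposition~\ref{prop:uniform-penalty-bound}, so $d_j\le(2\Upsilon/\rho_{\min})^{1/2}$. Within any maximal block of consecutive iterations in which the safeguard is not violated, the agreement inequality $d_j\le\tau d_{j-1}+\eta_j$ holds with $\tau\in(0,1)$ and $\eta_j\downarrow0$, which forces geometric contraction of $d_j$ toward $0$ on that block. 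After the finitely many $\Upsilon$-restarts have ceased and the models have stabilized (via $\rho^{(j)}=\rho_{\max}$ and $x^{(j)}\to x^*$), the gap generated by the inner loop from the fixed restart point $y_0^{(0)}$ stabilizes as well; since $\eta_j\to0$ and the contraction factor is strict, the produced $d_j$ eventually satisfies the agreement inequality, so the safeguard cannot be violated infinitely often. Combining the two finiteness statements yields a last restart index $J^\star$, beyond which $y_0^{(j)}=y^{(j)}$ and all accepted iterates are generated without any reset.

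I expect the main obstacle to be the $\Upsilon$-step, precisely the mismatch between the \emph{triggering} quantity $M_j$, evaluated at the current accepted iterate $y^{(j)}$, and the quantity that actually updates the threshold, which after a reset is evaluated at $y_0^{(0)}$ rather than at $y^{(j)}$. Because the penalty model $\Phi_{(\rho^{(j)},x^{(j)})}$ changes from one outer iteration to the next, one cannot compare $M_{j+1}$ with $\Upsilon^{(j)}$ directly; the comparison becomes legitimate only after exploiting the eventual constancy of $\rho^{(j)}$, the subsequential convergence of $x^{(j)}$, and the inner-loop descent to show that the evolving models are asymptotically frozen. Making this ``asymptotically frozen model'' reduction rigorous, so that a single no-restart step propagates $\Upsilon^{(j)}\ge M_{j+1}$ to all subsequent steps, is the technical heart of the argument; the primal-dual part then follows comparatively routinely from the strict geometric contraction once the models have stabilized.
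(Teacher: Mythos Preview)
Your plan diverges from the paper's proof in a way that creates a genuine circularity. The paper's argument for the primal-dual safeguard does \emph{not} pass through any convergence of $x^{(j)}$ or any ``asymptotically frozen model'' reduction. Instead it makes a simple case split on $\rho^{(j)}$: because line~14 multiplies $\rho$ by $r>1$ at \emph{every} outer iteration (not only on restarts), one has $\rho^{(j)}=\rho_{\max}$ for all $j$ beyond a fixed index $N_1$, so Case~1 ($\rho^{(j)}<\rho_{\max}$) accounts for at most $N_1$ restarts. In Case~2, with $\rho^{(j)}\equiv\rho_{\max}$, the paper argues directly from the fixed curvature term $\tfrac{\rho_{\max}}{2}\|x-y\|^2$, boundedness of the iterates (Lemma~\ref{lem:uniform-bound}), and Lipschitz continuity of the projection that $\|\tilde x^{(j)}-\tilde y^{(j)}\|\to 0$, while the right-hand side $\tau\Delta^{(j-1)}+\eta_j$ also tends to zero; hence the agreement test is eventually satisfied and no further restart in this case occurs. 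No appeal to $x^{(j)}\to x^*$ is made.

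Your route, by contrast, invokes the subsequential convergence $x^{(j)}\to x^*$ from Theorem~\ref{thm:outer-model-CCAM} to ``freeze'' the models and then argue that the gap produced from the restart point $y_0^{(0)}$ stabilizes. But subsequential convergence does not give you control over the model at \emph{every} $j$, only along the extracted subsequence; the restarts you want to rule out could sit precisely on the complementary indices. Upgrading to full-sequence convergence would require Theorem~\ref{thm:global-KL-bdd}, which in turn relies on the primal-dual agreement safeguard holding eventually---i.e., on the very finiteness you are trying to prove. The same circularity infects your $\Upsilon$-argument, where you again lean on $x^{(j)}\to x^*$ to compare $M_{j+1}$ with $\Upsilon^{(j)}$. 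The paper sidesteps this entirely for the $\Upsilon$-safeguard (handled in Remark~\ref{rem:finite-restarts-summary} rather than in the lemma's proof) by appealing directly to the uniform model bound of Proposition~\ref{prop:uniform-penalty-bound}, without any limit-passage in $x^{(j)}$. You correctly sensed that the frozen-model step is the weak point; the fix is to drop it and use the $\rho$-case split for the agreement safeguard.
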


\begin{proof}
A restart occurs only when the tentative pair 
$(\tilde x^{(j)}, \tilde y^{(j)})$ fails the agreement condition \gzit{eq:primalDual}, that here is 
\[
\|\tilde x^{(j)} - \tilde y^{(j)}\|
\;\le\;
\tau\,\|x^{(j-1)} - y^{(j-1)}\| + \eta_{j-1},
\]
in which case the penalty parameter is updated according to
\[
\rho^{(j)}
=
\min\{\,r\,\rho^{(j-1)},\;\rho_{\max}\,\},
\qquad r>1.
\]
Thus each restart either:
\begin{itemize}
\item[(a)] strictly increases $\rho^{(j)}$ by a factor $r>1$, or
\item[(b)] leaves $\rho^{(j)}$ unchanged because 
$\rho^{(j-1)}=\rho_{\max}$.
\end{itemize}

We show that neither case can occur infinitely often.

{\sc Case 1:} $\rho^{(j)}$ increases. Assumption~\ref{assumption_penalty} enforces the uniform bound 
$0<\rho^{(j)}\le\rho_{\max}<\infty$ for all $j$.
Since each restart multiplies $\rho^{(j)}$ by $r>1$, at most
\[
N_1 := \left\lceil \frac{\log(\rho_{\max}/\rho_{\min})}{\log r} \right\rceil
\]
such restarts can occur before $\rho^{(j)}$ reaches $\rho_{\max}$.
Thus, only finitely many restarts fall into {\sc Case 1}.

{\sc Case 2:} $\rho^{(j)}=\rho_{\max}$. Once $\rho^{(j)}=\rho_{\max}$, no further increases are possible, so the
overall objective model
\[
\Phi_{(\rho^{(j)},\,x^{(j-1)})}(x,y)
\]
has a \emph{fixed} curvature term $\rho_{\max}\|x-y\|^2/2$ and a 
uniformly bounded quadratic term 
$\tfrac12 (x-x^{(j-1)})^\top \mathbf{H}(x-x^{(j-1)})$ by 
Assumption~\ref{assumption_hessian}.
Therefore, the agreement step
\[
y^{(j)} = P_{C\cap C_s}(x^{(j)})
\]
is a \emph{uniformly contractive} projection in the sense that
\[
\|x-y\|\to 0
\quad\text{forces}\quad
\|\tilde x^{(j)} - \tilde y^{(j)}\|\to 0,
\]
because both $x^{(j)}$ and $y^{(j)}$ remain in a bounded set
(Lemma~\ref{lem:uniform-bound}).
Hence, when $\rho^{(j)}=\rho_{\max}$, the right-hand side of the agreement
condition
\[
\tau\|x^{(j-1)}-y^{(j-1)}\| + \eta_{j-1}
\]
tends to zero as $j\to\infty$ by the summability of $\{\eta_j\}_{j\in \Nz_0}$, while the left-hand side $\|\tilde x^{(j)} - \tilde y^{(j)}\|$ also tends to zero by bounded curvature of the model and Lipschitz continuity of the projection. Thus, the agreement condition is eventually satisfied automatically, and no
restart occurs. Hence, {\sc Case 2} cannot produce infinitely many restarts.

Note that the value $\rho_{\min}$ does not define a separate case:
whenever $\rho^{(j)}<\rho_{\max}$---including the situation 
$\rho^{(j)}=\rho_{\min}$---a restart always triggers the multiplicative
increase $\rho^{(j)}=r\,\rho^{(j-1)}$, and hence this situation is already
covered by {\sc Case 1}.

Both restart types ({\sc Cases 1--2}) can occur only finitely many times.  Thus, the sequence of accepted iterates $\{(x^{(j)},y^{(j)})\}_{j\in \Nz_0}$ is well-defined for all $j$, and after finitely many iterations the algorithm proceeds without further
restarts. \qed
\end{proof}

\begin{remark}[Finiteness of the restart mechanisms]\label{rem:finite-restarts-summary}
Algorithm~\ref{a.EPD} employs two distinct restart safeguards: 
(i) a \emph{descent-based} restart triggered when the penalty model value exceeds
the threshold~$\Upsilon$, and 
(ii) a \emph{primal-dual agreement} restart triggered when the discrepancy
$\|x^{(j)}-y^{(j)}\|$ violates the contraction condition. The descent-based restart can occur only finitely many times. Indeed,
Proposition~\ref{prop:uniform-penalty-bound} establishes the existence of a
uniform constant $\Upsilon<\infty$ such that all accepted penalty model values
$\Phi_{(\rho^{(j)},x^{(j)})}(x^{(j)},y^{(j)})$ are bounded above by~$\Upsilon$.
Consequently, the condition that triggers a descent restart cannot be violated
infinitely often. The primal-dual agreement restart is also finite. Under the uniform penalty
bounds of Assumption~\ref{assumption_penalty}, each restart either strictly
increases the penalty parameter or occurs at $\rho^{(j)}=\rho_{\max}$, where the
agreement condition is eventually satisfied automatically due to boundedness of
the iterates and geometric decay of the primal-dual discrepancy.
This is formalized in Lemma~\ref{lem:finite-restarts}. Hence, both restart mechanisms in Algorithm~\ref{a.EPD} are transient: after
finitely many iterations, the algorithm proceeds without further restarts.
\end{remark}

\begin{theorem}[{\tt BF} and {\tt CC-M} for the original problem]
		\label{thm:outer-original-CCM}
		Under the hypotheses of Theorem~\ref{thm:outer-model-CCAM}, let 
		$\{(x^{(j)},y^{(j)})\}_{j\in \Nz_0}$ be the outer iterates generated by {\tt PD-QN}.
		Because {\tt PD-QN} guarantees the primal-dual agreement safeguard
		described above, the discrepancy $\|x^{(j)}-y^{(j)}\|$ converges to zero,
		so that $\{x^{(j)}\}_{j\in\Nz_0}$ and $\{y^{(j)}\}_{j\in\Nz_0}$ share a common limit $x^*=y^*$.
		Then the limit point satisfies:
		\begin{enumerate}
			\item[(i)] $x^*\in C\cap C_{\s}$ and is a {\tt BF} point of~\eqref{pr:p-original}.
			\item[(ii)] There exist $u^*\in N^F_C(x^*)$ and $\lambda^*\in\Rz^n$ such that
			\[
			g(x^*)+u^*+\lambda^*=0, 
			\qquad \lambda^*_i=0 \ \text{for all } i\in I_1(x^*),
			\]
			that is, $x^*$ is {\tt CC-AM} for the original problem. 
			Since $C$ is symmetric ({\tt CCP} holds by Lemma \ref{lem:CCP-symmetric}), $x^*$ is in particular {\tt CC-M}.
		\end{enumerate}
	\end{theorem}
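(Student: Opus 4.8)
The plan is to combine the subsequential limit already furnished by Theorem~\ref{thm:outer-model-CCAM} with the two safeguards of {\tt PD-QN}, and to read off basic feasibility and {\tt CC-M} stationarity from the coupled first-order conditions of the $x$- and $y$-subproblems evaluated in the limit. First I would pin down the common limit. By Lemma~\ref{lem:finite-restarts} the restart safeguard fires only finitely often, so there is an index $j_0$ beyond which the primal-dual agreement recurrence \eqref{eq:primalDual} holds unconditionally. Writing $d_j:=\|x^{(j)}-y^{(j)}\|$, the inequality $d_j\le\tau d_{j-1}+\eta_j$ with $\tau\in(0,1)$ and $\eta_j\downarrow0$ is a standard contracted recurrence whose iterates satisfy $\limsup_j d_j\le(1-\tau)^{-1}\limsup_j\eta_j=0$, hence $d_j\to0$. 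Together with the bounded subsequence $(x^{(j)},y^{(j)})\to(x^*,y^*)$ from Theorem~\ref{thm:outer-model-CCAM}, this forces $x^*=y^*$, and since $C\cap C_{\s}$ is closed with $\operatorname{supp}(y^{(j)})\subseteq\mathcal L_j$ and $|\mathcal L_j|\le s$, the limit $x^*=y^*$ lies in $C\cap C_{\s}$.

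For part (i) I would exploit that there are only finitely many supports of size at most $s$, so along the convergent subsequence the selected super support stabilizes to a fixed $\mathcal L$ with $I_1(x^*)\subseteq\mathcal L$. On $\mathcal L$ the $y$-update is exactly the Euclidean projection of $x^{(j)}$ onto the convex restriction $C_{\mathcal L}$, giving the variational inequality $(x^{(j)}_{\mathcal L}-y^{(j)}_{\mathcal L})^{\T}(w-y^{(j)}_{\mathcal L})\le0$ for all $w\in C_{\mathcal L}$. I would then substitute $\rho^{(j-1)}(x^{(j)}-y^{(j)})$ using the inner-loop $x$-stationarity residual \eqref{e.optPx}--\eqref{e.gnorm}, namely $\rho^{(j-1)}(x^{(j)}-y^{(j)})=e^{(j)}-g(x^{(j-1)})-\mathbf H^{(j-1)}(x^{(j)}-x^{(j-1)})$ with $\|e^{(j)}\|\le\varepsilon_{j-1}\to0$, and pass to the limit. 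Using continuity of $g$, boundedness of $\{\mathbf H^{(j-1)}\}$, and the vanishing of the cross term, the inequality collapses to $g_{\mathcal L}(x^*)^{\T}(w-x^*_{\mathcal L})\ge0$ for all $w\in C_{\mathcal L}$, that is, $x^*_{\mathcal L}$ is stationary for $\min\{f(U_{\mathcal L}w):w\in C_{\mathcal L}\}$. Because $C$ is symmetric, verifying this single super support certifies the {\tt BF} condition \eqref{eq:BF-condition}, proving (i).

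For part (ii) I would run the same limit without restricting to $\mathcal L$. The $y$-optimality gives $x^{(j)}-y^{(j)}\in N_{C\cap S_{\mathcal L_j}}(y^{(j)})$, where $S_{\mathcal L}:=\{v:\operatorname{supp}(v)\subseteq\mathcal L\}$; since symmetric convex sets have nonempty interior, the sum rule yields a splitting $\rho^{(j-1)}(x^{(j)}-y^{(j)})=u^{(j)}+\lambda^{(j)}$ with $u^{(j)}\in N_C^F(y^{(j)})$ and $\lambda^{(j)}$ supported off $\mathcal L_j$, so $\lambda^{(j)}\in N_{C_s}^F(y^{(j)})$ because $I_1(y^{(j)})\subseteq\mathcal L_j$. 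Feeding in the same $x$-residual gives $g(y^{(j)})+u^{(j)}+\lambda^{(j)}\to0$ along $y^{(j)}\to x^*$, which is precisely {\tt CC-AM}. Finally, Lemma~\ref{lem:CCP-symmetric} guarantees {\tt CCP} for symmetric $C$, so the Fr\'echet normal memberships are outer semicontinuous and {\tt CC-AM} upgrades to {\tt CC-M}: there exist $u^*\in N_C(x^*)$ and $\lambda^*$ with $\lambda^*_{I_1(x^*)}=0$ and $g(x^*)+u^*+\lambda^*=0$.

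The main obstacle is the passage to the limit in these coupled relations, and specifically the two points where the quasi-Newton structure enters. The delicate step is showing that the cross term $\mathbf H^{(j-1)}(x^{(j)}-x^{(j-1)})$ and the gradient mismatch $g(y^{(j)})-g(x^{(j-1)})$ vanish, which rests on the successive-iterate gap $x^{(j)}-x^{(j-1)}\to0$; this does not follow from subsequential boundedness alone and must be extracted from the sufficient-descent and relative-error properties of the uniformly strongly convex penalty models (the same ingredients underlying the full-sequence result). The second delicate point is preserving the Fr\'echet-normal-cone memberships in the limit, which is exactly what the cone-continuity property of Lemma~\ref{lem:CCP-symmetric} supplies; without symmetry one would instead need an explicit constraint qualification on $C$.
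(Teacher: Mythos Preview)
Your proposal follows essentially the same route as the paper: fix a stable support along a convergent subsequence, use the $y$-subproblem projection to obtain a normal-cone decomposition $\rho^{(j-1)}(x^{(j)}-y^{(j)})=u^{(j)}+\lambda^{(j)}$ with $u^{(j)}\in N_C^F(y^{(j)})$ and $\lambda^{(j)}$ supported off $\mathcal L$, substitute the inexact $x$-stationarity $g(x^{(j-1)})+\mathbf H^{(j-1)}(x^{(j)}-x^{(j-1)})+\rho^{(j-1)}(x^{(j)}-y^{(j)})=r^{(j)}$, and pass to the limit using outer semicontinuity of normal cones and {\tt CCP}.

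Two points worth flagging. First, your treatment of part~(i) is actually more complete than the paper's: the paper only records $x^*_{\mathcal L}=P_{C_{\mathcal L}}(x^*_{\mathcal L})$ (trivial feasibility) and declares {\tt BF}, whereas you derive the gradient variational inequality $g_{\mathcal L}(x^*)^{\T}(w-x^*_{\mathcal L})\ge0$, which is what {\tt BF} actually requires. Second, you correctly isolate the genuine technical pressure point---namely that $\mathbf H^{(j-1)}(x^{(j)}-x^{(j-1)})\to0$ and $g(x^{(j-1)})\to g(x^*)$ both hinge on $x^{(j)}-x^{(j-1)}\to0$ along the subsequence. The paper simply asserts $(x^{(j)},x^{(j-1)})\to(x^*,x^*)$ without further comment; your instinct to source this from the uniform-strong-convexity descent machinery (the ingredients behind Theorem~\ref{thm:global-KL-bdd}) is the right one, since subsequential convergence of $x^{(j)}$ alone does not automatically drag $x^{(j-1)}$ to the same limit.
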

	
\begin{proof} By Theorem~\ref{thm:outer-model-CCAM}, the outer iterates admit a convergent subsequence $(x^{(j)},y^{(j)})\to(x^*,y^*)$ with $y^*\in C\cap C_s$ and $\mathcal L:=I_1(y^*)$, $|\mathcal L|\le s$. Because {\tt PD-QN} guarantees the primal-dual agreement safeguard,
			the discrepancy $\|x^{(j)}-y^{(j)}\|$ decreases geometrically and tends to zero.
			Hence $x^{(j)}-y^{(j)}\!\to0$ for the entire sequence, and consequently
			$x^*=y^*$ and $I_1(x^*)=\mathcal L$.
		
		(i) \bfi{{\tt BF} property.}  Recall that at every accepted outer iteration, the $y$-variable is obtained
as the exact Euclidean projection of $x^{(j)}$ onto the convex set
$C\cap S_{\mathcal L_j}$ through the inner-loop $y$-subproblem. At each accepted outer iterate, the $y$-block is solved over the convex set
		$C\cap S_{\mathcal L}$, where $S_{\mathcal L}:=\{v\in\Rz^n: v_{\mathcal L^c}=0\}$,
		that is,
		\[
		y^{(j)}\in\Argmin\Bigl\{\tfrac{\rho^{(j-1)}}{2}\|x^{(j)}-v\|^2:\ v\in C\cap S_{\mathcal L_j}\Bigr\},
		\qquad \mathcal L_j:=I_1(y^{(j)}).
		\]
		Since only finitely many supports of size at most $s$ exist, along the subsequence we may assume
		$\mathcal L_j=\mathcal L$ for all $j$. Using the fact that each $y^{(j)}$ is a projection of $x^{(j)}$ onto
$C\cap S_{\mathcal L_j}$ and that $\mathcal L_j$ stabilizes along the subsequence, passing to the limit and using $x^*=y^*$ gives
		\[
		y^*_{\mathcal L}\in\Argmin_{v_{\mathcal L}\in C_{\mathcal L}}\ \|x^*_{\mathcal L}-v_{\mathcal L}\|^2,
		\qquad y^*_{\mathcal L^c}=0.
		\]
		Because $x^*=y^*$, this is equivalent to
		$x^*_{\mathcal L}=P_{C_{\mathcal L}}(x^*_{\mathcal L})$ and $x^*_{\mathcal L^c}=0$, i.e., $x^*$ is {\tt BF}.
		
		(ii) \bfi{{\tt CC-AM} (and {\tt CC-M}).}  We combine the first-order conditions of the two blocks.
		
		({\sc b}$_1$) \textbf{$y$-block optimality on $C\cap S_{\mathcal L}$.}
		Since $C$ is closed and convex and $S_{\mathcal L}$ is a linear subspace of $\mathbb{R}^n$,
their intersection $C\cap S_{\mathcal L}$ is closed and convex. Hence, optimality of $y^{(j)}$ yields
		\[
		0\ \in\ \rho^{(j-1)}\bigl(y^{(j)}-x^{(j)}\bigr)+N^F_C\bigl(y^{(j)}\bigr)+N^F_{S_{\mathcal L}}\bigl(y^{(j)}\bigr),
		\]
		that is, there exist $u^{(j)}\in N^F_C(y^{(j)})$ and $\lambda^{(j)}\in N^F_{S_{\mathcal L}}(y^{(j)})$ such that
		\begin{equation}\label{eq:yKKT-correct}
			\rho^{(j-1)}\bigl(x^{(j)}-y^{(j)}\bigr)\ =\ u^{(j)}+\lambda^{(j)}.
		\end{equation}
		Here, $N^F_{S_{\mathcal L}}(y)=\{\lambda\in\Rz^n:\lambda_{\mathcal L}=0\}$.
		
		({\sc b}$_2$) \textbf{$x$-block approximate stationarity.}
		By the inner stopping rule,
		\begin{equation}\label{eq:xstat-correct}
			\nabla_x \Phi_{(\rho^{(j-1)},x^{(j-1)})}\!\bigl(x^{(j)},y^{(j)}\bigr)\ =\ r^{(j)},\qquad r^{(j)}\to 0,
		\end{equation}
		that is,
		\[
		g\!\bigl(x^{(j-1)}\bigr)+\mathbf{H}^{(j-1)}\!\bigl(x^{(j)}-x^{(j-1)}\bigr)+\rho^{(j-1)}\!\bigl(x^{(j)}-y^{(j)}\bigr)\ =\ r^{(j)}.
		\]
		({\sc b}$_3$) \textbf{Substitution and passage to the limit.}
		Substituting \eqref{eq:yKKT-correct} into \eqref{eq:xstat-correct} gives
		\[
		g\!\bigl(x^{(j-1)}\bigr)+\mathbf{H}^{(j-1)}\!\bigl(x^{(j)}-x^{(j-1)}\bigr)+u^{(j)}+\lambda^{(j)}\ =\ r^{(j)}.
		\]
		Along the convergent subsequence, $(x^{(j)},x^{(j-1)})\to(x^*,x^*)$, $r^{(j)}\to 0$, and
		$\mathbf{H}^{(j-1)}\!\bigl(x^{(j)}-x^{(j-1)}\bigr)\to 0$ by boundedness of $\{\mathbf{H}^{(j)}\}_{j\in \Nz_0}$.
		By outer semicontinuity of normal cones for closed convex sets, there exist limits
		$u^*\in N^F_C(x^*)$ and $\lambda^*\in N^F_{S_{\mathcal L}}(x^*)$ of subsequences of
		$\{u^{(j)}\}_{j\in \Nz_0}$ and $\{\lambda^{(j)}\}_{j\in \Nz_0}$, respectively. Hence
		\[
		g(x^*)\ +\ u^*\ +\ \lambda^*\ =\ 0.
		\]
Since $x^{(j)}\in S_{\mathcal L}$ for all sufficiently large $j$ and
$x^{(j)}\to x^*$, we have $x^*\in S_{\mathcal L}$ and hence
$\mathcal L = I_1(x^*)$ when $\|x^*\|_0=s$. If $\|x^*\|_0=s$, then $N^F_{C_{\s}}(x^*)=N^F_{S_{\mathcal L}}(x^*)=\{\lambda:\lambda_{\mathcal L}=0\}$,
		so $\lambda^*\in N^F_{C_{\s}}(x^*)$ and $x^*$ is {\tt CC-AM}.  
		If $\|x^*\|_0<s$, then $N^F_{C_{\s}}(x^*)=\{0\}$ and we may write the stationarity as
		$g(x^*)+u^*=0$ with $\lambda^*=0$, i.e., the {\tt CC-AM} condition with the cardinality multiplier equal to zero.
		Since $C$ is symmetric, {\tt CCP} holds by Lemma \ref{lem:CCP-symmetric}, and {\tt CC-AM} implies {\tt CC-M} at $x^*$. \qed
	\end{proof}

\begin{remark}[Role of {\tt BFS}-type support selection] \label{rem:BFS-role}
The {\tt BFS}-type support selection used in the inner loop of {\tt PD-QN}
(Algorithm~\ref{a.EPD}, lines~9--10) serves only to generate candidate
supports $\mathcal L_\ell$ of cardinality at most $s$ and to improve practical performance.  The convergence and stationarity analysis depends solely on properties of the accepted outer iterates—namely, blockwise optimality of the
$y$-subproblem, sparsity preservation, and primal-dual agreement—and
is therefore independent of the specific mechanism used to select $\mathcal L_\ell$.  In particular, the proofs remain valid for any support-selection rule that produces supports of size at most $s$ and
terminates the inner loop.
\end{remark}

	\begin{corollary}[Convergence of Algorithm~\ref{a.EPD}]
		\label{cor:algo-convergence}
		Let $\{(x^{(j)},y^{(j)})\}_{j\in \Nz_0}$ be the sequence generated by Algorithm~\ref{a.EPD}.
		Under Assumptions~\ref{assumption_hessian}--\ref{assumption_penalty}, every convergent subsequence 
		has a common limit $x^*=y^*$ such that:
		\begin{enumerate}
			\item[(i)] $x^*$ is a {\tt BF} point of~\eqref{pr:p-original};
			\item[(ii)] there exist $u^*\in N^F_C(x^*)$ and $\lambda^*\in N^F_{C_{\s}}(x^*)$ with
			\[
			g(x^*)+u^*+\lambda^*=0
			\quad\text{and}\quad
			\lambda^*_i=0\ \text{for all } i\in I_1(x^*),
			\]
			that is, $x^*$ is {\tt CC-AM} for~\eqref{pr:p-original}. Since $C$ is symmetric (hence {\tt CCP} holds by Lemma \ref{lem:CCP-symmetric}), $x^*$ is  {\tt CC-M}.
		\end{enumerate}
		In particular, every convergent subsequence of Algorithm~\ref{a.EPD} converges to a {\tt BF} and {\tt CC-M} stationarity point of the original sparse optimization problem. 
		The following remarks explain how the primal-dual safeguard in the algorithm ensures the agreement condition $\|x^{(j)}-y^{(j)}\|\to 0$ and how this safeguard acts as an algorithmic enforcement of the {\tt CC-AM} regularity property of \cite{kanzow2021}.
	\end{corollary}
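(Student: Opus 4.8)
The plan is to assemble the corollary from the three structural results already established: the finite-restart property (Lemma~\ref{lem:finite-restarts}), the subsequential model analysis (Theorem~\ref{thm:outer-model-CCAM}), and the stationarity transfer to the original problem (Theorem~\ref{thm:outer-original-CCM}). First I would invoke Lemma~\ref{lem:finite-restarts} to fix a finite index $j_0$ beyond which no restart is triggered; for $j\ge j_0$ the accepted outer pair $(x^{(j)},y^{(j)})$ coincides with the last inner-loop pair and both safeguards are inactive, so the sequence is well-defined and the remaining analysis may be carried out on the tail $\{(x^{(j)},y^{(j)})\}_{j\ge j_0}$ without loss of generality.

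Next I would use boundedness. By Lemma~\ref{lem:uniform-bound}(i) and Corollary~\ref{cor:BCD_boundedness}, the iterates $\{x^{(j)}\}$ remain in a compact ball, and the projection estimate in the proof of Theorem~\ref{thm:outer-model-CCAM} shows $\{y^{(j)}\}$ is bounded as well; hence the joint sequence is bounded and Bolzano--Weierstrass guarantees the existence of convergent subsequences. For any such subsequence with limit $(x^*,y^*)$, Theorem~\ref{thm:outer-model-CCAM}(i)--(iii) supplies feasibility $y^*\in C\cap C_{\s}$, the sparsity structure $y^*_{\mathcal L^c}=0$ with $|\mathcal L|\le s$, blockwise minimality in $y$, and vanishing of the model gradient $\nabla_x\Phi_{(\rho^*,x^*)}(x^*,y^*)=0$.

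The key step is to pass from subsequential to full-sequence agreement. Here I would invoke the primal-dual safeguard built into {\tt PD-QN}: for $j\ge j_0$ the contraction $\|x^{(j)}-y^{(j)}\|\le\tau\|x^{(j-1)}-y^{(j-1)}\|+\eta_j$ holds with $\tau\in(0,1)$ and $\eta_j\downarrow0$, which by a standard telescoping/geometric argument forces $\|x^{(j)}-y^{(j)}\|\to0$. Since this is a statement about the \emph{entire} tail, every accumulation point automatically satisfies $x^*=y^*$ and $I_1(x^*)=\mathcal L$. With this identification, Theorem~\ref{thm:outer-original-CCM}(i)--(ii) applies verbatim to each limit point: the $y$-block projection characterization yields the {\tt BF} condition $x^*_{\mathcal L}=P_{C_{\mathcal L}}(x^*_{\mathcal L})$, and combining the $y$-block first-order condition \eqref{eq:yKKT-correct} with the $x$-block approximate stationarity \eqref{eq:xstat-correct}, then passing to the limit via outer semicontinuity of the Fr\'echet normal cones and boundedness of $\{\mathbf{H}^{(j)}\}$, produces multipliers $u^*\in N^F_C(x^*)$ and $\lambda^*\in N^F_{C_{\s}}(x^*)$ with $g(x^*)+u^*+\lambda^*=0$ and $\lambda^*_i=0$ for $i\in I_1(x^*)$, i.e. {\tt CC-AM}.

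Finally I would upgrade {\tt CC-AM} to {\tt CC-M}: since $C$ is closed, convex, and symmetric, Lemma~\ref{lem:CCP-symmetric} guarantees {\tt CCP}, so the implication {\tt CC-AM}$\Rightarrow${\tt CC-M} is automatic, yielding conclusion (ii). The main obstacle I anticipate is bookkeeping rather than analytical: one must ensure the common limit $x^*=y^*$ holds for \emph{every} convergent subsequence (not merely one), which is exactly what the full-sequence decay $\|x^{(j)}-y^{(j)}\|\to0$ delivers, and one must argue that the evolving support $\mathcal L_j$ stabilizes along each subsequence---this follows from the finiteness of the collection of supports of cardinality at most $s$, permitting passage to a further subsequence on which $\mathcal L_j\equiv\mathcal L$ is constant. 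Once these two points are secured, the corollary is a direct restatement of Theorems~\ref{thm:outer-model-CCAM} and~\ref{thm:outer-original-CCM}.
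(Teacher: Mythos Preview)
Your proposal is correct and follows essentially the same approach as the paper: the corollary is presented without a separate proof and is treated as an immediate consequence of Lemma~\ref{lem:finite-restarts}, Theorem~\ref{thm:outer-model-CCAM}, and Theorem~\ref{thm:outer-original-CCM}, with Lemma~\ref{lem:CCP-symmetric} supplying the {\tt CC-AM}$\Rightarrow${\tt CC-M} upgrade. Your write-up actually spells out more of the bookkeeping (finite-restart tail, full-sequence agreement from the contraction inequality, support stabilization by finiteness) than the paper does explicitly, but the logical route is identical.
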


	{\tt PD-QN} enforces primal–dual agreement automatically through a geometric
	safeguard on $\|x^{(j)}-y^{(j)}\|$.  This built-in mechanism replaces the external
	{\tt CC-AM} regularity assumption of \cite{kanzow2021}, ensuring that {\tt CC-AM} implies
	{\tt CC-M} without additional constraint qualifications.

\subsection{Full-Sequence Convergence under Uniform Strong Convexity (Bounded Penalties)}
\label{subsec:KL-bounded}

We now strengthen the subsequential convergence results established above and
prove convergence of the \emph{entire} sequence of outer iterates generated by
{\tt PD-QN}. No additional regularity assumptions are required for this result.
Indeed, under bounded penalties, each penalty model $\Phi_{(\rho,z)}$ is a
uniformly strongly convex quadratic function. This structural property,
together with sufficient descent, relative error control, and the primal-dual
agreement safeguard built into {\tt PD-QN}, is sufficient to establish a
finite-length property and convergence of the full sequence.

Although uniform strong convexity immediately implies a 
KL inequality with exponent $\tfrac12$ and uniform constants, we emphasize
that no explicit invocation of the KL framework is required. Instead, the
analysis below relies directly on the quadratic growth and error bound
properties induced by uniform strong convexity.

\begin{theorem}[Global convergence under bounded penalties]
\label{thm:global-KL-bdd}
Let $\{(x^{(j)},y^{(j)})\}_{j\in\Nz_0}$ be the accepted outer iterates produced by 
{\tt PD-QN}. Suppose Assumptions~\ref{assumption_hessian}--\ref{assumption_penalty} hold with 
$0<\rho_{\min}\le\rho^{(j)}\le\rho_{\max}<\infty$ for all $j$. Assume further that:
\begin{itemize}
\item[(i)] \textbf{Boundedness.}  
The trajectory is bounded:
$\{(x^{(j)},y^{(j)})\}_{j\in \Nz_0}\subset\mathcal{K}$ for some compact set $\mathcal{K}$
(cf.~Lemma~\ref{lem:uniform-bound} and
Proposition~\ref{prop:uniform-penalty-bound}).

\item[(ii)] \textbf{Sufficient decrease and relative error.}  
Let
\[
\Psi^{(j)}:=\Phi_{(\rho^{(j-1)},x^{(j-1)})}(x^{(j)},y^{(j)}), 
\qquad
F_j(u):=\Phi_{(\rho^{(j-1)},x^{(j-1)})}(u).
\]
There exist constants $\delta>0$, $\kappa\ge 0$, and a summable sequence
$\{\varepsilon_j\}_{j\in \Nz_0}$ introduced in line 1 of {\tt PD-QN} such that
\begin{eqnarray}
\Psi^{(j)}-\Psi^{(j+1)}
&\ge& \delta\,\|x^{(j+1)}-x^{(j)}\|^2,\label{e.sdc}\\[1mm]
\|\nabla F_j(x^{(j)},y^{(j)})\|
&\le& \kappa\,\|x^{(j)}-x^{(j-1)}\|+\varepsilon_{j-1}\label{e.rrb}.
\end{eqnarray}

\item[(iii)] \textbf{Agreement safeguard.}  
The primal-dual discrepancy satisfies
$\|x^{(j)}-y^{(j)}\|\to 0$ as $j\to\infty$.
\end{itemize}

Then the sequence $\{(x^{(j)},y^{(j)})\}_{j\in \Nz_0}$ converges to a single limit
$(x^*,y^*)$. Moreover, $x^*=y^*$, and by
Theorem~\ref{thm:outer-original-CCM}, $x^*$ is a {\tt BF} point and a
{\tt CC-M} stationarity point of the original sparse optimization problem.
\end{theorem}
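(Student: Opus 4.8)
The plan is to upgrade the subsequential statement of Theorem~\ref{thm:outer-original-CCM} to convergence of the \emph{entire} outer sequence by proving the finite-length (absolutely summable increments) property $\sum_{j}\|x^{(j+1)}-x^{(j)}\|<\infty$, following the abstract descent scheme of Attouch--Bolte--Svaiter adapted to the uniformly strongly convex penalty models. First I would fix the Lyapunov sequence $\Psi^{(j)}:=\Phi_{(\rho^{(j-1)},x^{(j-1)})}(x^{(j)},y^{(j)})$. By the assumed sufficient-decrease inequality~\eqref{e.sdc}, $\{\Psi^{(j)}\}$ is nonincreasing, and since its first two (quadratic-model) terms are bounded below by Lemma~\ref{lem:bounded_below_quadratic} while the penalty term $\tfrac12\rho^{(j-1)}\|x^{(j)}-y^{(j)}\|^2\ge 0$, it is bounded below. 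Hence $\Psi^{(j)}\downarrow\Psi^*$ for some finite $\Psi^*$, and $r_j:=\Psi^{(j)}-\Psi^*\downarrow 0$. Telescoping~\eqref{e.sdc} gives $\sum_j\|x^{(j+1)}-x^{(j)}\|^2\le\delta^{-1}(\Psi^{(0)}-\Psi^*)<\infty$, so $\|x^{(j+1)}-x^{(j)}\|\to0$; combined with the agreement hypothesis~(iii), this already forces $\Psi^*=0$, but square-summability alone is not yet the finite-length property I need.

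The decisive ingredient is the KL inequality with exponent $\tfrac12$ supplied by uniform strong convexity. Since Assumptions~\ref{assumption_hessian} and~\ref{assumption_penalty} give $\mathbf H^{(j-1)}+\rho^{(j-1)}I\succeq(\lambda_{\min}+\rho_{\min})I$, each model $F_j:=\Phi_{(\rho^{(j-1)},x^{(j-1)})}(\cdot,y^{(j)})$ is $\mu$-strongly convex in $x$ with the \emph{uniform} modulus $\mu:=\lambda_{\min}+\rho_{\min}$, independent of $j$. Strong convexity yields the error-bound/quadratic-growth estimate $\Psi^{(j)}-\min_x F_j \le \tfrac{1}{2\mu}\|\nabla_x F_j(x^{(j)},y^{(j)})\|^2$, which, after anchoring at the limiting value $\Psi^*$ (see below), produces a uniform bound $\sqrt{2\mu}\,\sqrt{r_j}\le\|\nabla_x F_j(x^{(j)},y^{(j)})\|$. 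Combining this with the assumed relative-error bound~\eqref{e.rrb}, the concavity inequality $\sqrt{r_j}-\sqrt{r_{j+1}}\ge(r_j-r_{j+1})/(2\sqrt{r_j})$, and~\eqref{e.sdc} produces the telescoping estimate
\[
\|x^{(j+1)}-x^{(j)}\|^2\ \le\ \frac{2}{\delta\sqrt{2\mu}}\bigl(\kappa\|x^{(j)}-x^{(j-1)}\|+\varepsilon_{j-1}\bigr)\bigl(\sqrt{r_j}-\sqrt{r_{j+1}}\bigr).
\]
Applying the arithmetic--geometric mean inequality to split the right-hand side, summing over $j$, absorbing the $\kappa\|x^{(j)}-x^{(j-1)}\|$ terms into the left-hand side, and using the telescoping of $\sqrt{r_j}$ together with the summability of $\{\varepsilon_j\}$ (line~1 of {\tt PD-QN}) yields $\sum_j\|x^{(j+1)}-x^{(j)}\|<\infty$. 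Thus $\{x^{(j)}\}$ is Cauchy and converges to a single limit $x^*$; by hypothesis~(iii), $\|x^{(j)}-y^{(j)}\|\to0$, so $y^{(j)}\to x^*=:y^*$ as well. Since the entire sequence converges, every subsequence shares the limit $(x^*,x^*)$, and Theorem~\ref{thm:outer-original-CCM} identifies $x^*$ as a {\tt BF} and {\tt CC-M} stationarity point.

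I expect the main obstacle to be the \emph{changing-model} aspect: the abstract KL/finite-length machinery is classically stated for a single fixed objective, whereas here both the linearization point $x^{(j-1)}$ and the penalty parameter $\rho^{(j-1)}$ vary with $j$. Two points require care. First, the KL/error-bound constant must be genuinely uniform; this is exactly what the spectral bounds of Assumption~\ref{assumption_hessian} together with $\rho^{(j)}\ge\rho_{\min}$ guarantee through $\mu=\lambda_{\min}+\rho_{\min}$. Second, and more delicate, one must anchor the inequality at the \emph{limiting} merit value $\Psi^*$ rather than at the per-step model minimum $m_j:=\min_x F_j(\cdot,y^{(j)})$, i.e., verify $\Psi^{(j)}-m_j\ge r_j$ up to summable corrections. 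Here the structure $x^*=y^*$ is essential: the vanishing increments and~(iii) force $\Psi^*=0$, while a direct computation of $m_j$ (whose minimizer is the closed-form $x$-update) shows $m_j\to -\tfrac12\,g(x^*)^{\T}(\mathbf H^*+\rho^* I)^{-1}g(x^*)\le 0=\Psi^*$, the correction being controlled by $\|x^{(j-1)}-y^{(j)}\|^2$ and $\|g(x^{(j-1)})\|^2$ via the uniform boundedness of $\{x^{(j)}\}$, $\{\mathbf H^{(j)}\}$, and $\{\rho^{(j)}\}$ from Lemma~\ref{lem:uniform-bound} and Proposition~\ref{prop:uniform-penalty-bound}. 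Reconciling $m_j$ with $\Psi^*$ in this way is the step where the bounded-penalty regime and the symmetric projection structure genuinely earn their place in the argument.
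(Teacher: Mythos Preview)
Your proposal follows essentially the same four-step route as the paper's proof: establish that $\Psi^{(j)}$ is monotone and bounded below, use the uniform strong convexity modulus $\mu=\lambda_{\min}+\rho_{\min}$ to obtain a KL/error-bound inequality with exponent~$\tfrac12$, combine with~\eqref{e.sdc}--\eqref{e.rrb} to derive the finite-length property $\sum_j\|x^{(j+1)}-x^{(j)}\|<\infty$, and conclude via~(iii) and Theorem~\ref{thm:outer-original-CCM}. Your explicit telescoping estimate and your discussion of the changing-model/anchoring issue are in fact more detailed than the paper's own Step~3, which simply asserts that the combination ``yields a standard descent recursion'' without spelling out how the per-iteration model minimum is reconciled with the fixed limit~$\Psi^*$.
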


\begin{proof}
Define $u^{(j)}=(x^{(j)},y^{(j)})$ and
\[
F_j(u):=\Phi_{(\rho^{(j-1)},x^{(j-1)})}(u),
\qquad
\Psi^{(j)}:=F_j(u^{(j)}).
\]
By Proposition~\ref{prop:uniform-penalty-bound}, the sequence
$\{\Psi^{(j)}\}_{j\in \Nz_0}$ is bounded below. Moreover, by construction of the inner loop,
each accepted outer iterate satisfies
\[
\Psi^{(j+1)} \le \Psi^{(j)},
\]
so the limit
\[
\Psi^*:=\lim_{j\to\infty}\Psi^{(j)}
\]
exists and is finite.

\textbf{Step 1: Sufficient decrease.}
Since $\Phi_{(\rho,z)}(\cdot,y)$ is $(\lambda_{\min}+\rho_{\min})$-strongly
convex uniformly in $(\rho,z)$, exact minimization of the $x$-subproblem yields
the descent estimate
\[
\Psi^{(j)}-\Psi^{(j+1)}
\;\ge\;
\frac{\lambda_{\min}+\rho_{\min}}{2}\,
\|x^{(j+1)}-x^{(j)}\|^2.
\]
Thus, the sufficient decrease condition \gzit{e.sdc} holds with
$\delta:=\tfrac12(\lambda_{\min}+\rho_{\min})$.

\textbf{Step 2: Relative error bound.}
Since $F_j$ is quadratic in $(x,y)$, its gradient $\nabla F_j$ is affine and hence
Lipschitz continuous on the compact set $\mathcal K$; therefore, there exists
$\kappa>0$ such that
\[
\|\nabla F_j(u^{(j)})-\nabla F_j(u^{(j-1)})\|
\le
\kappa\,\|u^{(j)}-u^{(j-1)}\|.
\]
Moreover, by the inexact solution of the inner subproblems enforced in
{\tt PD-QN}, the residual at iteration $j-1$ satisfies
$\|\nabla F_j(u^{(j-1)})\|\le\varepsilon_{j-1}$.
Combining these estimates yields
\[
\|\nabla F_j(u^{(j)})\|
\le
\kappa\,\|x^{(j)}-x^{(j-1)}\|+\varepsilon_{j-1},
\]
which is the required relative error condition \eqref{e.rrb}.

\textbf{Step 3: Finite-length property via uniform strong convexity.}
Uniform strong convexity implies the quadratic growth (error bound) condition
\[
F_j(u)-F_j(u^*)
\;\le\;
\frac{1}{2(\lambda_{\min}+\rho_{\min})}\,
\|\nabla F_j(u)\|^2
\]
for every critical point $u^*$ of $F_j$. Combining this inequality with the
sufficient decrease estimate from Step~1 and the relative error bound from
Step~2 yields a standard descent recursion, which implies the finite-length
property
\[
\sum_{j=0}^{\infty}\|x^{(j+1)}-x^{(j)}\| < \infty.
\]
Consequently, the sequence $\{x^{(j)}\}_{j\in \Nz_0}$ is Cauchy and converges to some limit
$x^*$. 

\medskip
\noindent
\textbf{Step 4: Agreement and identification of the limit.}
By the primal-dual agreement safeguard proposed in {\tt PD-QN},
$\|x^{(j)}-y^{(j)}\|\to0$, and therefore $y^{(j)}\to y^*=x^*$.
Finally, Theorem~\ref{thm:outer-original-CCM} implies that $x^*=y^*$ is a
{\tt BF} point and a {\tt CC-M} stationarity point of the original problem.
\qed
\end{proof}

\section{Numerical Experiments}\label{numerica}

This section presents a comprehensive numerical study designed to evaluate the practical performance of the proposed penalty decomposition framework for cardinality-constrained optimization. Our experiments aim to assess both the efficiency and robustness of the algorithm in computing high-quality sparse solutions, with particular emphasis on approximate global minimizers and strong (\texttt{CC-S}) stationarity points. To ensure a fair and meaningful comparison, we adopt unified stopping criteria, carefully chosen stationarity measures, and standardized computational budgets across all solvers. The proposed method is tested on a broad and diverse benchmark suite, encompassing synthetic and data-driven problems with varying dimensions, sparsity levels, and constraint symmetries. It is compared against several state-of-the-art algorithms using performance profiles based on multiple computational cost measures.

\subsection{Stopping Tests and Computational Measures of Stationarity}
\label{subsec:stopping-and-stationarity}

The stopping tests used in the numerical experiments are based on a combination
of objective-based accuracy measures and computable residuals that quantify
violations of strong first-order stationarity conditions.
These criteria are designed to enable a fair and support-agnostic comparison of
algorithms for the nonconvex problem~\eqref{pr:p-original}.

\bfi{Objective-based accuracy.}
For a given solver $\sol \in \mathcal S$, convergence with respect to objective
reduction is monitored through the normalized quotient
\begin{equation}\label{e.qs}
	q_{\sol} := \frac{f_{\sol} - f_{\opt}}{f_0 - f_{\opt}},
\end{equation}
where $f_{\sol}$ denotes the best function value obtained by solver $\sol$,
$f_0$ is the function value at the common initial point,
and $f_{\opt}$ is the best-known objective value over all solvers.
Since $f_{\opt}$ is generally unknown in large-scale nonconvex problems, it is
approximated in practice by the smallest objective value achieved across all
methods under comparison.
This normalization allows objective progress to be compared across problems with
different scales and conditioning.

\bfi{Motivation for stationarity-based measures.}
Problem~\eqref{pr:p-original} is nonconvex due to the cardinality constraint and
may admit multiple stationarity points lying on different supports.
As a result, comparing algorithms solely on the basis of recovered supports or
objective values is inherently ill-posed.
In particular, different stationarity notions permit different classes of descent
directions, and weaker notions may declare convergence at points that are not
stationarity in a stronger sense.
Consequently, meaningful numerical comparisons must rely on computable residuals
that quantify violations of well-defined stationarity conditions in a manner that
is independent of the specific support selected by the algorithm.

Throughout this subsection, we consider feasible points $x \in C \cap C_s$.
We denote by $I_1(x)$ and $I_0(x)$ the support and off-support index sets,
respectively, and by $g(x)=\nabla f(x)$ the gradient of the objective.
All stationarity measures are computed using the gradient together with
appropriate projections onto the convex set $C$, accounting for its symmetry and
bound structure.

\bfi{Strong stationarity residual $\mathrm{rg}_S$.}
We quantify stationarity using a residual $\mathrm{rg}_S(x)$ that measures
violation of the strong stationarity condition associated with the inclusion
\[
0 \in g(x) + N_C(x) + N_{C_s}(x),
\]
as characterized in Section~\ref{sec:theorem5.6}.
The residual $\mathrm{rg}_S(x)$ is constructed from two components:
(i) a restricted projected-gradient residual on the active support, and
(ii) an activity (or swap) violation on the inactive indices.
Its precise form depends on whether the cardinality constraint is active and on
the symmetry class of the convex set~$C$.

If $\|x\|_0 < s$, the sparsity constraint is inactive and we define
\[
\mathrm{rg}_S(x)
:= \bigl\| x_S - P_{C_S}\bigl(x_S - g_S(x)\bigr) \bigr\|_\infty,
\]
where $S$ denotes the index set of the $s$ largest components of $x$
(in magnitude or in value, depending on the symmetry of $C$), and
$P_{C_S}$ is the projection onto the corresponding restriction of~$C$.

If $\|x\|_0 = s$, the residual additionally accounts for violations of
strong optimality conditions on the inactive set and for admissible
support swaps, and is defined as
\[
\mathrm{rg}_S(x)
:= \max\Bigl\{
\bigl\| x_S - P_{C_S}\bigl(x_S - g_S(x)\bigr) \bigr\|_\infty,\;
\mathrm{viol}_{I_0(x)}(x)
\Bigr\},
\]
where $\mathrm{viol}_{I_0(x)}(x)$ measures the largest admissible
first-order descent associated with activating or swapping an inactive
index. Specifically,
\[
\mathrm{viol}_{I_0(x)}(x)
:=
\begin{cases}
\displaystyle
\max\Bigl\{\,0,\; \max_{j\in I_0(x)} g_j(x)
           - \min_{i\in I_1(x)} g_i(x) \Bigr\},
& \mbox{\begin{tabular}{l}for nonnegative\\
type-1 symmetric \\
sets,\end{tabular}}\\[2ex]
\displaystyle
\max\Bigl\{\,0,\; \max_{j\in I_0(x)} |g_j(x)|
           - \min_{i\in I_1(x)} |g_i(x)| \Bigr\},
& \mbox{\begin{tabular}{l}for type-2 \\ symmetric sets.\end{tabular}}
\end{cases}
\]
The residual $\mathrm{rg}_S(x)$ vanishes if and only if $x$ satisfies the
\texttt{CC-S} (strong) stationarity conditions for~\eqref{pr:p-original},
that is, no admissible activation or support swap yields a first-order
descent direction.

\bfi{Stopping criterion.}
We denote by {\tt nf} the number of function evaluations and by {\tt ng} the number
of gradient evaluations, and define ${\tt nf2g} = {\tt nf} + 2\,{\tt ng}$.
A problem is considered {\bf solved} by solver $\sol$ if there exists a feasible
iterate $x_{\sol}$ such that the stopping test
\[
\mathcal R_{\sol} :=
\begin{cases}
q_{\sol}, 
& \text{if objective-based accuracy is used}, \\[0.8ex]
\mathrm{rg}_{S}(x_{\sol}), 
& \text{if strong stationarity is targeted},
\end{cases}
\qquad
\mathcal R_{\sol} \le \epsilon
\]
is satisfied, and neither the maximum allowed computational budget
${\tt nf2gmax}$ nor the maximum time limit ${\tt secmax}$ is exceeded.
Otherwise, the problem is classified as {\bf unsolved}. Since $f_{\opt}$ in $q_{\sol}$ is defined relative to the set of solvers under comparison,
objective-based performance profiles may change when this set is modified, and their interpretation should be understood accordingly. This dependence should be kept in mind when interpreting the objective-based
performance profiles in Figures \ref{f.f5}--\ref{f.f8}.

In all numerical experiments reported in this paper, we use
${\tt secmax} = \infty$,
${\tt nf2gmax} = 20000$, and
accuracy thresholds
$\epsilon \in \{10^{-6},10^{-3}\}$.
This unified stopping framework ensures that solvers are compared consistently in
terms of both objective quality and strong stationarity, independently of the
support structure reached during optimization.

\subsection{Numerical Evaluation and Stationarity-Based Comparison}

We now evaluate the practical performance of the proposed penalty decomposition
framework across a broad collection of cardinality-constrained optimization
problems.
Our goals are twofold:
(i) to assess the robustness and efficiency of the quasi-Newton variants of
{\tt PD-QN} in computing high-quality sparse solutions over a wide range of
synthetic and data-driven models, and
(ii) to compare these methods with leading state-of-the-art algorithms for sparse
optimization.
The benchmark suite includes multiple problem families with both convex and
nonconvex objectives, diverse sparsity regimes, and different symmetry structures
of the feasible set.

\bfi{Evaluation challenges.}
Due to the nonconvex and combinatorial nature of the feasible region
$C \cap C_s$, stationarity points are generally \bfi{not unique}.
Different algorithms may therefore converge to distinct stationarity points,
possibly associated with different supports and objective values.
As a consequence, no single reference solution or support pattern can serve as a
universally meaningful benchmark.
This motivates evaluation protocols that combine support-aware diagnostics with
support-independent performance measures.

\bfi{Support-aware and support-independent comparisons.}
Whenever an exact or globally optimal solution is available, we report support
recovery statistics relative to that solution.
For problem classes where exact solvers are computationally infeasible, support
comparisons are interpreted cautiously and are used only to illustrate qualitative
behavior.
Crucially, all quantitative performance comparisons are also conducted in a
\emph{support-independent} manner using the intrinsic measures
$q_{\sol}$ and $\mathrm{rg}_S$.
This avoids excluding high-quality stationarity points associated with different
supports and ensures that algorithms are compared on equal footing.

\bfi{Dominance analysis.}
To further highlight differences between algorithms, we perform a dominance-based
comparison across stationarity points.
An algorithm is said to dominate another on a given problem instance if it
achieves both a lower objective value and a smaller strong stationarity residual
$\mathrm{rg}_S$, irrespective of the support structure.
This analysis captures cases in which {\tt PD-QN} converges to stationarity points of
higher quality that would not be identified as superior under
support-restricted comparisons alone.

In summary, our numerical evaluation protocol combines
(i) objective-based accuracy assessment via $q_{\sol}$,
(ii) stationarity-based assessment via the strong residual $\mathrm{rg}_S$, and
(iii) dominance analysis that is independent of support patterns.
This multi-layered strategy reflects the intrinsic nonconvexity of
cardinality-constrained optimization and enables a rigorous, transparent, and
fair comparison of algorithms with different convergence behavior and optimality
guarantees.

\subsection{Test Problems} 

To comprehensively assess algorithms for cardinality-constrained optimization
problems, we employ a unified benchmark generator.
The generator produces a diverse collection of both synthetic and data-driven
problem classes covering a range of dimensions, sparsity levels, and constraint
structures.
Each problem is formulated as \eqref{pr:p-original}.
These problems are summarized in Table~\ref{tab:cc_summary} and their dimensional
and statistical characteristics are summarized in
Table~\ref{tab:dim_summary}
(all real datasets are obtained from the UCI Machine Learning Repository%
\footnote{\url{https://archive.ics.uci.edu/ml/index.php}}).
Such problems are also discussed in the survey paper~\cite{Tillmann2024}.

We generate $30$ independent problem instances.
For each instance $i$, the problem dimension is drawn uniformly at random,
\[
n_i \sim \mathcal{U}(10,\,500), 
\qquad
m_i = \max\{2,\,\lfloor 0.5\,n_i \rfloor\},
\]
where $m_i$ denotes the number of samples (i.e., the number of rows of $A$ in
data-driven models).

The sparsity level $s_i$ is not drawn uniformly but is assigned according to
three prescribed sparsity regimes, following the rules implemented in the
benchmark generator.
With probability $1/3$, the instance is labeled \emph{low-sparsity} and we set
$s_i = \lfloor 0.15\,n_i \rfloor$.
With probability $1/3$, the instance is placed in the \emph{medium-sparsity}
regime and we set $s_i = \lfloor 0.25\,n_i \rfloor$.
The remaining third of the problems are assigned to a \emph{high-sparsity}
category, where $s_i$ is chosen as either $\lfloor 0.50\,n_i \rfloor$ or
$\lfloor 0.75\,n_i \rfloor$ with equal probability.
Finally, to satisfy the theoretical constraints $2 \le s_i < n_i - 1$, we enforce
\[
s_i \leftarrow \min\!\bigl(\max\{2,\,s_i\},\,n_i - 1\bigr).
\]
This construction yields a balanced set of low-, medium-, and high-sparsity
instances across a wide range of problem dimensions.

Each problem is stored as a MATLAB structure containing the relevant data
($A,b,Q,c$ when applicable) together with function handles for $f$ and $g$.
As in~\cite[Algorithms~1--4]{Beck2016}, projections and bounds are applied according
to the problem type.
Two projection symmetries are implemented:
\begin{itemize}
\item \textbf{Nonnegative-symmetric} (\texttt{nneg\_sym}): nonnegativity and
simplex-type constraints.
\item \textbf{Absolute-symmetric} (\texttt{abs\_sym}): sign-symmetric constraints,
typical in unconstrained or box-constrained sparse models.
\end{itemize}

For each problem, a projection operator $\Pi_s(x)$ enforces the sparsity pattern
and structural constraints (box, simplex, or unit sphere).
The initial vector $x^0$ is random Gaussian and projected onto the feasible set,
i.e.,
\[
x^0 = \Pi_s(z), \quad z \sim \mathcal{N}(0, I_n).
\]

\begin{table}[htbp]
\centering
\caption{Summary of the cardinality-constrained benchmark problems.
\texttt{nneg\_sym} stands for \textbf{nonnegative-symmetric} and
\texttt{abs\_sym} for \textbf{absolute-symmetric}.
Objectives and gradients are normalized in the benchmark generator.}
\label{tab:cc_summary}
\small
\scalebox{0.95}{\begin{tabular}{@{} l l l l @{}}
\hline
\textbf{Problem Type} & \textbf{Objective} & \textbf{Data Structure} & \textbf{Symmetry} \\ 
\hline
Sparse Quadratic
& $\frac{1}{2} x^\top Qx + c^\top x$
& Random SPD $Q$
& \texttt{abs\_sym} \\

Portfolio Optimization
& $\frac{1}{2} x^\top Qx - c^\top x$
& Toeplitz $Q_{ij}=0.9^{|i-j|}$
& \texttt{nneg\_sym} \\

Sparse Regression (Boston)
& $\frac{1}{2}\|Ax-b\|^2$
& Boston Housing dataset
& \texttt{abs\_sym} \\

Logistic Regression (Iris)
& $\frac{1}{m}\sum_i \log(1+e^{-b_i A_i x})$
& Binary labels
& \texttt{abs\_sym} \\

Sparse PCA (Wine)
& $-x^\top \Sigma x$
& Wine dataset
& \texttt{abs\_sym} \\

Disjunctive Quadratic
& $\frac{1}{2}\|Ax\|^2$
& Random $A \in \mathbb{R}^{\lceil n/2\rceil\times n}$
& \texttt{abs\_sym} \\

Phase Retrieval
& $\frac{1}{2}\||Ax|-b\|^2$
& Gaussian $A$
& \texttt{abs\_sym} \\

Sparse Control Problem
& $\frac{1}{2}\|Ax-b\|^2$
& Linear system $(A,b)$
& \texttt{abs\_sym} \\
\hline
\end{tabular}}
\end{table}

\begin{table}[htbp]
\centering
\caption{Dimensional and statistical characteristics of the generated benchmark problems.}
\label{tab:dim_summary}
\renewcommand{\arraystretch}{1.1}
\begin{tabular}{l l}
\hline
\textbf{Parameter} & \textbf{Specification} \\
\hline
Number of problems & $30$ \\
\hline
Problem dimension &
$n \in [10,\,500]$ (uniform) \\
\hline
Sample size &
$m = \max\{2,\,\lfloor 0.5\,n \rfloor\}$ \\
\hline
Sparsity level &
Three-range rule (each with probability $1/3$):\\
& Low: $s=\lfloor 0.15\,n \rfloor$ \\
& Medium: $s=\lfloor 0.25\,n \rfloor$ \\
& High: $s\in\{\lfloor 0.50\,n \rfloor,\lfloor 0.75\,n \rfloor\}$ \\
& Final adjustment: $s\leftarrow\min(\max\{2,s\},\,n-1)$ \\
\hline
Real datasets &
Iris, Wine, Boston Housing (UCI repository) \\
\hline
\end{tabular}
\end{table}

\paragraph{\bfi{Why these test problems are challenging.}} The benchmark suite considered in this study is deliberately designed to be
challenging for sparse optimization algorithms, both algorithmically and
theoretically.
First, the presence of an explicit cardinality constraint renders all instances
combinatorial and nonconvex, with the number of admissible supports growing
exponentially in~$n$.
Second, many of the problem classes listed in Table~\ref{tab:cc_summary} exhibit
additional sources of nonconvexity beyond sparsity alone, including indefinite
quadratic objectives, bilinear structures, and nonsmooth compositions (e.g.,
absolute values in phase retrieval).

Third, the inclusion of medium- and high-sparsity regimes, where $s$ may be as
large as $0.5n$ or $0.75n$, significantly increases difficulty relative to the
classical highly sparse setting.
In these regimes, the distinction between active and inactive coordinates becomes
less pronounced, leading to many competing supports with comparable objective
values and weak first-order signals for support selection.
This effect is particularly pronounced in symmetric feasible sets, where
permutation or sign invariance induces large families of equivalent or nearly
equivalent stationarity points.

Finally, for most instances in the benchmark suite, exact global minimizers are
unknown and computationally infeasible to obtain.
As a result, algorithmic performance cannot be meaningfully assessed solely by
support recovery or objective values.
Instead, robust evaluation requires stationarity-based criteria that are
independent of the specific support reached, which motivates the use of strong
stationarity (\texttt{CC-S}) residuals and dominance-based comparisons in our numerical analysis.

\subsection{Approximating the Lipschitz constant $L$}

Although the Lipschitz constant of the gradient can, in principle, be computed exactly for several of the quadratic and regression problems in our test set, doing so requires estimating the dominant eigenvalue of matrices such as $Q$ or $A^\top A$. This becomes increasingly expensive as the problem dimension grows, and repeatedly performing such spectral computations inside an iterative method would add substantial overhead with little practical benefit. Moreover, the global constant is typically a very conservative estimate of the local smoothness that actually governs the algorithm’s behavior. For these reasons, all algorithms in our study simply start with $L=1$ and then update it adaptively using a lightweight backtracking rule. At each iteration we compute
\[
h = \min\!\bigl(0.9,\,\max(10^{-3},\,\max(\|y\|_{\infty},1)\sqrt{\varepsilon})\bigr),
\qquad
L \leftarrow \max\!\bigl(L,\,|f - f_{\mathrm{old}}|/h\bigr),
\]
which provides a reliable and inexpensive approximation of the local Lipschitz constant. 
This approach eliminates the need for costly eigenvalue calculations while ensuring that the stepsizes used by the various local solvers remain stable and well-aligned with the local curvature of the objective function.

\subsection{Algorithms Compared}

We consider the quasi-Newton family associated with Algorithm~\ref{a.EPD}, 
written compactly as
\[
\text{\tt PD\text{-}QN} 
\in \{\text{\tt PD\text{-}D},\,\text{\tt PD\text{-}LM1},\,\text{\tt PD\text{-}LM2},\,\text{\tt PD\text{-}LM3}\},
\]
Here, {\tt LM1}, {\tt LM2}, {\tt LM3}, and {\tt D} denote the three limited-memory 
Hessian approximations and the diagonal approximation, respectively 
(Algorithms~1--2 in \cite{suppMat}).  
These algorithms use the enhanced line search 
scheme described in \cite[Section 7]{suppMat}.  The final selection of the best-performing variant within this family ({\tt PD-LM1} with ${\tt maxiterBFS}=250$, called {\tt PD-LM1-a}), based on an extensive comparative study, is deferred to \cite[Section 9]{suppMat}. We present the numerical performance of {\tt PD-LM1-a}, the most robust and efficient variant of Algorithm~\ref{a.EPD}, in finding approximate global minimizers and {\tt CC-S} stationarity points. We compare it against the following state-of-the-art methods:\\
\pt {\tt IHT}, the iterative hard-thresholding method proposed in \cite{Beck2013}.\\
\pt {\tt PSS}, the sparse-simplex method from \cite{Beck2013}.\\
\pt {\tt GSS}, the greedy sparse-simplex method from \cite{Beck2013}.\\
\pt {\tt BFS}, the basic feasible search method from \cite[Algorithm~5]{Beck2016}.\\
\pt {\tt ZCWS}, the zero-{\tt CW} search method from \cite[Algorithm~6]{Beck2016}.

\subsection{Practical Enhancements and Safeguards for Algorithm~\ref{a.EPD}}

To enhance robustness and practical performance, we incorporate two
algorithmic improvements that are applied uniformly to all versions of the
proposed method in our numerical comparisons. The first is a warm-start strategy
based on {\tt BFS}, which provides a high-quality initial point by identifying a
promising sparse support and refining the corresponding coefficients. The second
is a stagnation–recovery mechanism that invokes {\tt PSS} only when the inner
iterations fail to make progress, thereby enabling the algorithm to escape
undesirable stationarity supports. Together, these enhancements improve both
efficiency and reliability without altering the underlying structure of the core
algorithm.

\paragraph{\bfi{Warm-start strategy using {\tt BFS}.}} To initialize our algorithm, we employ {\tt BFS}, which is specifically designed for optimization over sparse symmetric sets. {\tt BFS} provides a high-quality starting point by efficiently identifying a promising support pattern and generating a feasible sparse vector that satisfies the structural constraints of the problem. Warm-starts of this type are widely used in sparse optimization, since the quality of the initial support often has a strong influence on the convergence behavior of subsequent nonconvex methods. In practice, {\tt BFS} frequently locates a support close to optimal, thereby reducing the computational burden on the main solver. Within {\tt BFS}, we further incorporate a restricted {\tt FISTA} step \cite{FISTA} to refine the coefficients on the selected support. This accelerated gradient refinement yields a fast and stable minimization of the objective over a fixed support, resulting in a numerically strong and computationally efficient initial point for the subsequent penalty decomposition iterations. In our experiments, the {\tt BFS} procedure requires at most 15 iterations, although it often terminates much earlier whenever the objective satisfies the condition
\[
|f - f_{\mathrm{old}}| < \varepsilon_{\mathrm{BFS}}\,(1 + |f_{\mathrm{old}}|),
\]
with $\varepsilon_{\mathrm{BFS}} = 10^{-20}$.  
A similar rule applies to {\tt FISTA}, which is capped at a finite number of iterations, but typically stops even sooner due to its own convergence criterion. For the number of iterations in \cite{suppMat}, a self-tuning has been done.

{\bf Stagnation Recovery via {\tt PSS}.} When the condition $\|y_{\text{new}} - y_{\text{old}}\| < 10^{-10}$ occurs, it indicates that neither the {\tt PD-QN} update nor the adaptive support; exploration step can modify the current iterate in a meaningful way (here $y_{\text{new}}$ and $y_{\text{old}}$ were evaluated by line 11 of Algorithm \ref{a.EPD} in the current and old iterations). In practice, this situation means that the algorithm has become stuck at a locally stationarity but suboptimal support, where the penalized gradient direction is too weak and the quasi–Newton correction cannot generate progress. To escape from such cases, we invoke the {\tt PSS} method as a last–resort perturbation. The {\tt PSS} scheme performs simple support–grow or support–swap moves followed by one-dimensional coordinate refinements, and is therefore effective at nudging the iterate out of a poor support. Importantly, we do not use {\tt PSS} as an initialization tool here; instead, it is applied only when the inner {\tt PD-QN} iterations exhibit complete stagnation. This makes its use lightweight and targeted: a single {\tt PSS} step often provides just enough variation in the support for {\tt PD-QN} to resume descent using curvature information and primal–dual consistency. In our experiments, this selective use of {\tt PSS} significantly improves robustness and helps the method avoid getting trapped in undesirable stationarity supports that smooth updates alone cannot overcome. In our experiments, the {\tt PSS} procedure requires at most 5 iterations, although it often terminates much earlier whenever the objective satisfies the condition
\[
|f - f_{\mathrm{old}}| < \varepsilon_{\mathrm{PSS}}\,(1 + |f_{\mathrm{old}}|),
\]
with $\varepsilon_{\mathrm{PSS}} = 10^{-3}$.  
A similar rule applies to {\tt FISTA}, which is capped at 5 iterations to solve one-dimensional problems. In {\tt FISTA}, the backtracking procedure used to update the approximate Lipschitz constant $L$ is also limited to at most 5 inner iterations.

\subsection{Values for Tuning Parameters}

In addition to the parameters associated with the {\tt FISTA}, {\tt PSS}, and {\tt BFS} subroutines, the remaining tuning constants in Algorithm~\ref{a.EPD} were set as follows:
$r=1.15$, 
$c=10^{-8}$, 
$\widehat{c}=100$, 
$\rho^{(0)}=10^{-2}$, 
$\rho_{\min}=10^{-2}$, 
$\rho_{\max}=10^{2}$, 
$\tau=0.999$, 
$\varepsilon_{\min}=\varepsilon_m$, 
$\varepsilon_0=0.1$, 
$m=10$, 
$\mu=1$, 
and $\varrho=10^{-10}$. 
Three parameters are updated during the iterations: the accuracy tolerances 
$\varepsilon_j$ and $\eta_j$ are set via
\[
\varepsilon_j=\eta_j = 
\max\bigl\{\varepsilon_{\min},\,0.1\,e^{-10^{-3}k}\bigr\},
\qquad 
\varepsilon_{\min}=\varepsilon_m,
\]
while the penalty parameter is increased according to 
\[
\rho^{(j)} = r\,\rho^{(j-1)} \in [\rho_{\min},\,\rho_{\max}].
\]
For all competing algorithms, we used their default values for tuning parameters.

\subsection{Performance Profile, Efficiency, and Robustness}

To compare the efficiency and robustness of our algorithm with the mentioned state-of-the-art algorithms, the performance profile \cite{DolanMore} is used based on the two cost measures {\tt nf2g} and {\tt sec}.  A solver is considered {\bf most robust} when it successfully handles the largest number of test problems, and {\bf most efficient} when it requires the fewest number {\tt nf2g} of function and gradient evaluations, or the least computational time {\tt sec}.  Using {\tt nf2g} as a cost measure is often appropriate because it balances function calls with the typically higher cost of gradient evaluations; in many real-life applications, the gradient is considerably more expensive to compute, so weighting it more heavily yields a more faithful estimate of the actual computational effort. This is particularly relevant for the diverse benchmark problems summarized in Table~\ref{tab:cc_summary}, where gradient computations vary markedly across quadratic, regression, logistic, and phase retrieval models.

\subsection{Comparison with State-of-the-Art Algorithms}

In this subsection, we compare the performance of the best version {\tt PD-LM1-a} of our algorithm with several state-of-the-art methods, namely {\tt IHT}, {\tt BFS}, {\tt ZCWS}, {\tt PSS}, and {\tt GSS}. The comparison is carried out using performance profiles based on two computational cost measures: {\tt nf2g} and {\tt sec}. For each competing method, we assess efficiency and robustness in computing both approximate global minimizers and {\tt CC-S} stationarity points, under different accuracy requirements. The following subsubsections report and discuss the corresponding results in detail.

\subsubsection{{\tt PD-LM1-a} versus {\tt IHT}}

From Figures~\ref{f.f5}-\ref{f.f6}, {\tt PD-LM1-a} is more efficient with respect to the two cost measures {\tt nf2g} and {\tt sec} and more robust than {\tt IHT} for computing both approximate global minimizers and {\tt CC-S} stationarity points.

\begin{figure}[H]
	\scalebox{0.72}{\begin{tabular}{ll}
			{\includegraphics[width=4cm,height=4cm]{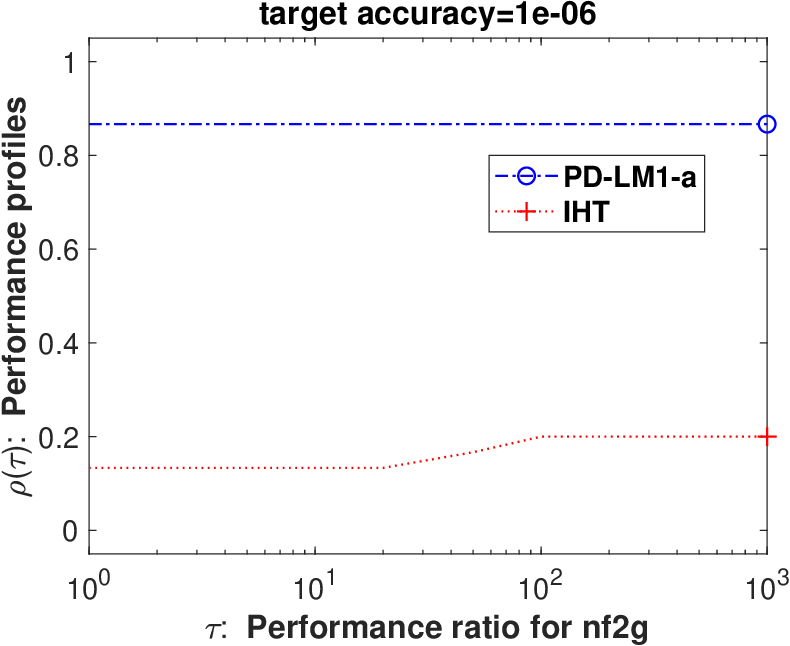}}\hspace{1mm}{\includegraphics[width=4cm,height=4cm]{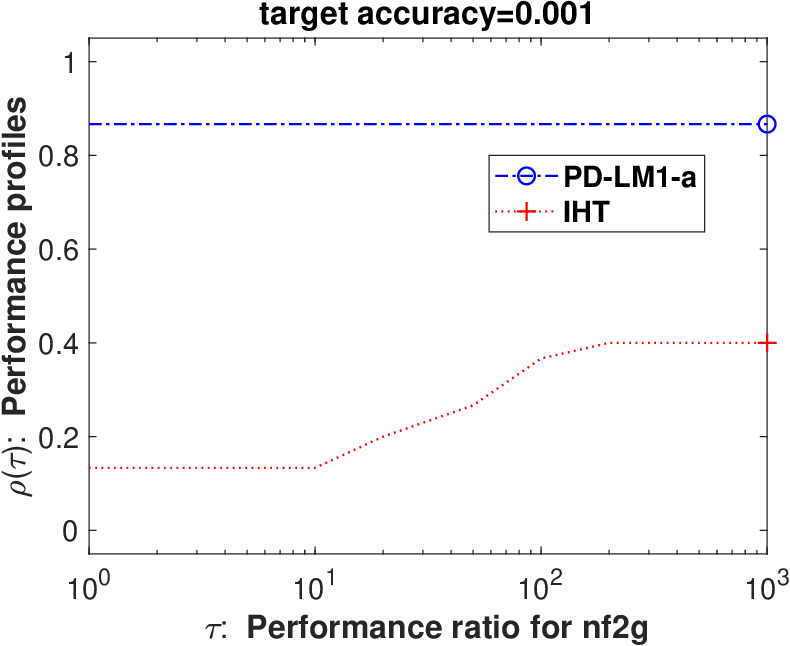}}{\includegraphics[width=4cm,height=4cm]{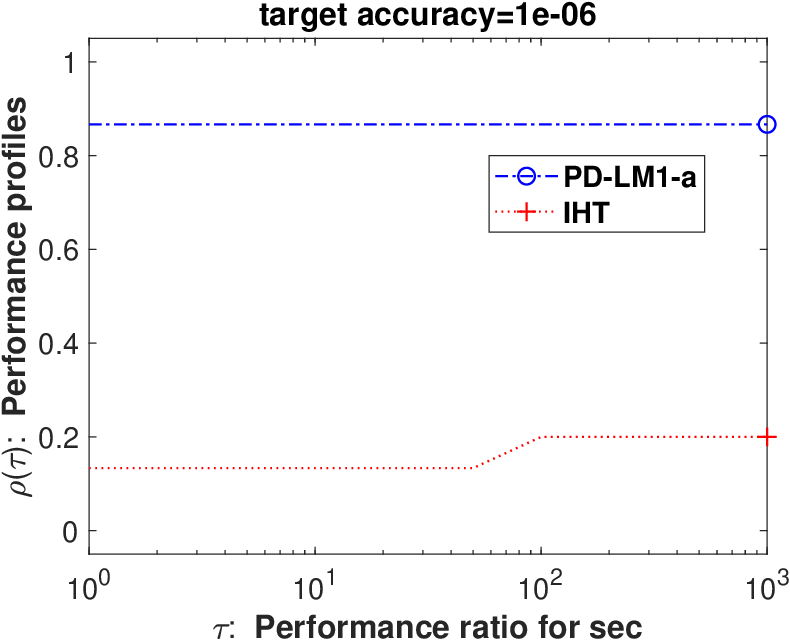}}{\includegraphics[width=4cm,height=4cm]{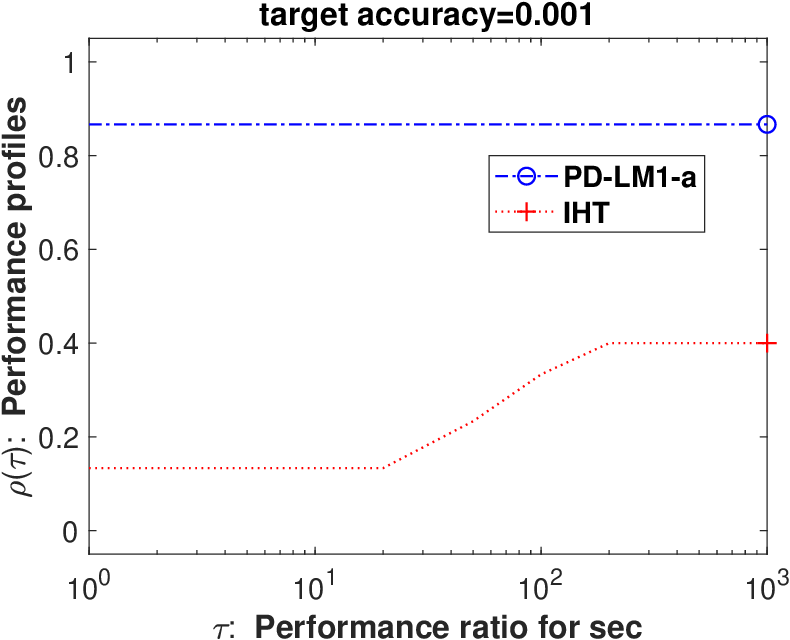}}\vspace{0.2cm}\\
	\end{tabular}}
	\caption{\justifying Performance profiles of {\tt PD-LM1-a} and {\tt IHT} in terms of {\tt nf2g} (first and second columns) and {\tt sec} (third and fourth columns), and with $q_{\sol}\le 10^{-6}$ (first and third columns) and $q_{\sol}\le 10^{-3}$ (second and fourth columns).}\label{f.f5}
\end{figure}

\begin{figure}[H]
	\scalebox{0.72}{\begin{tabular}{ll}
			{\includegraphics[width=4cm,height=4cm]{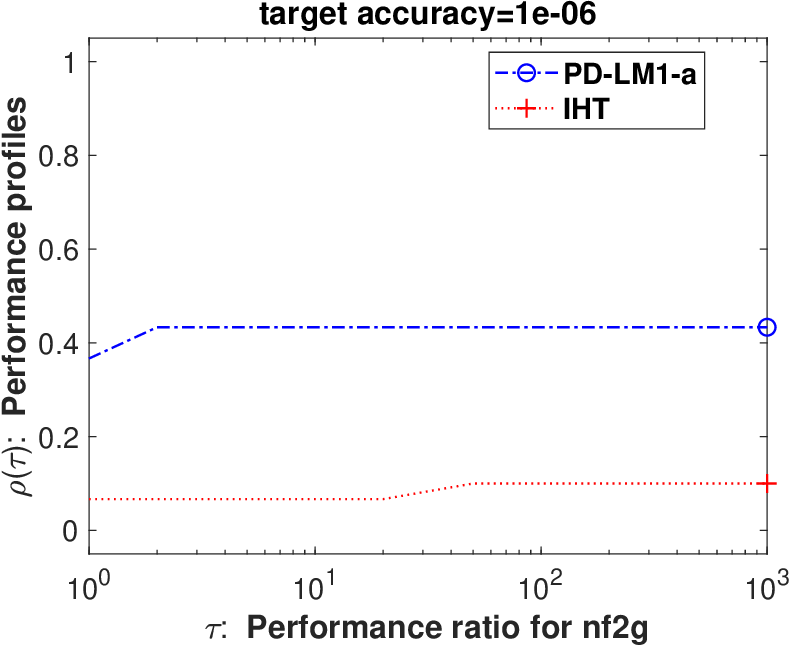}}\hspace{1mm}{\includegraphics[width=4cm,height=4cm]{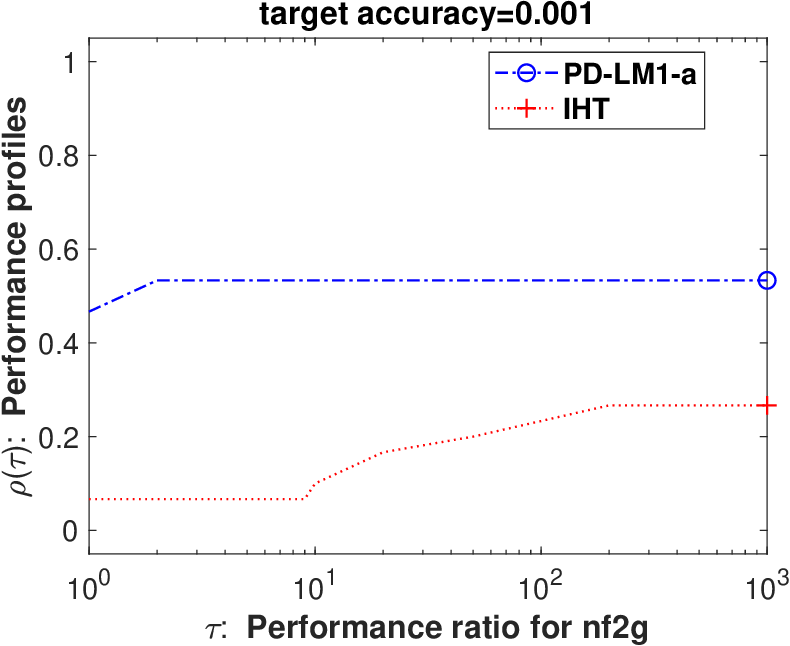}}{\includegraphics[width=4cm,height=4cm]{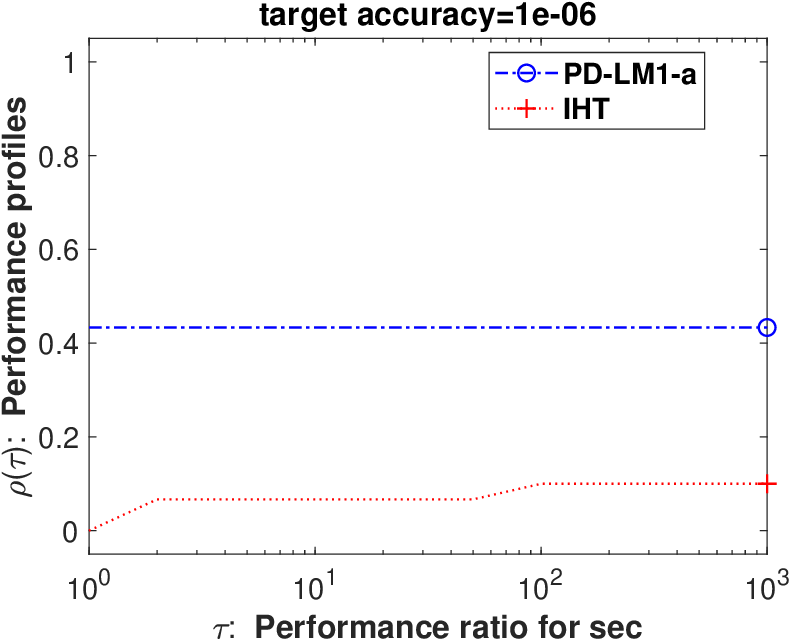}}{\includegraphics[width=4cm,height=4cm]{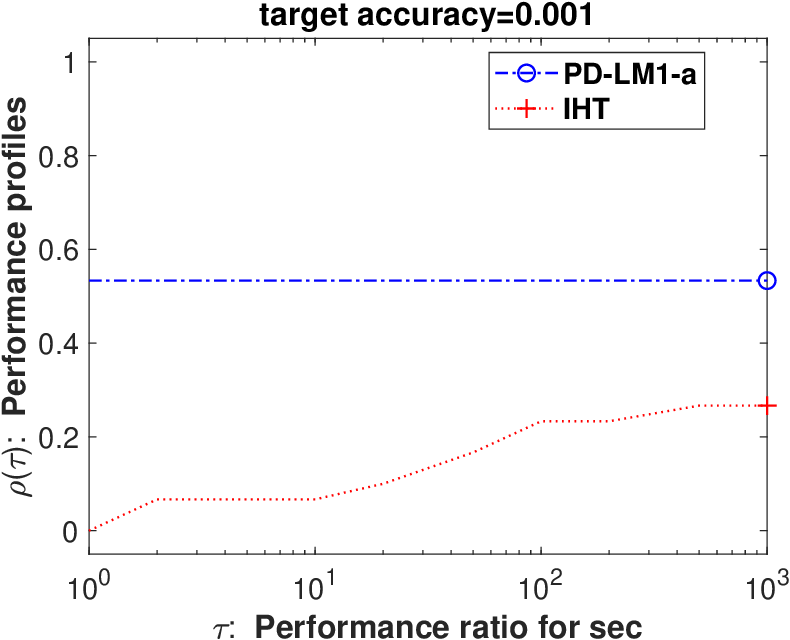}}\vspace{0.2cm}\\
	\end{tabular}}
	\caption{\justifying Performance profiles of {\tt PD-LM1-a} and {\tt IHT} in terms of {\tt nf2g} (first and second columns) and {\tt sec} (third and fourth columns), and with $\mathrm{rg}_{S}(x_{\sol})\le 10^{-6}$ (first and third columns) and $\mathrm{rg}_{S}(x_{\sol})\le 10^{-3}$ (second and fourth columns).}\label{f.f6}
\end{figure}

\clearpage
\subsubsection{{\tt PD-LM1-a} versus {\tt BFS}}

From Figure~\ref{f.f1}, {\tt PD-LM1-a} exhibits higher efficiency than {\tt BFS} with respect to both cost measures, {\tt nf2g} and {\tt sec}, and demonstrates greater robustness in computing approximate global minimizers.

From Figure~\ref{f.f2}, for high accuracy $\epsilon = 10^{-6}$, {\tt PD-LM1-a} outperforms {\tt BFS} in terms of the cost measures {\tt nf2g} and {\tt sec}, and is also more robust for computing {\tt CC-S} stationarity points. For lower accuracy $\epsilon = 10^{-3}$, {\tt PD-LM1-a} remains more efficient than {\tt BFS} with respect to {\tt nf2g} and {\tt sec}, while achieving comparable robustness.

\begin{figure}[!http]
	\scalebox{0.72}{\begin{tabular}{ll}
			{\includegraphics[width=4cm,height=4cm]{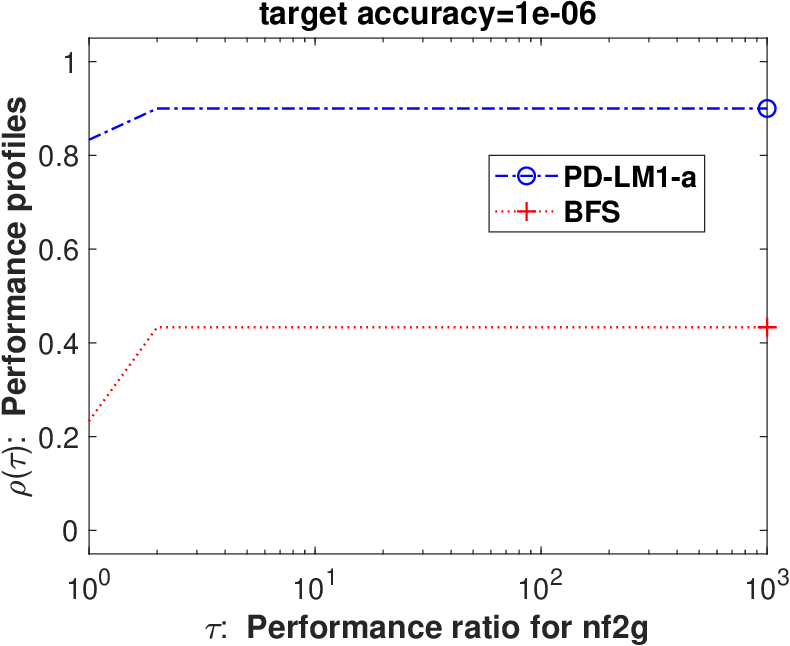}}\hspace{1mm}{\includegraphics[width=4cm,height=4cm]{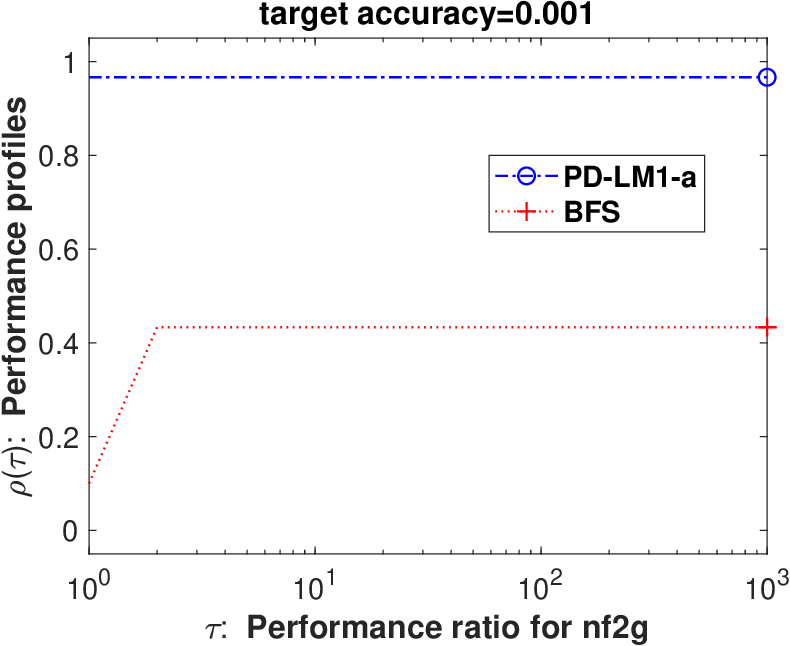}}{\includegraphics[width=4cm,height=4cm]{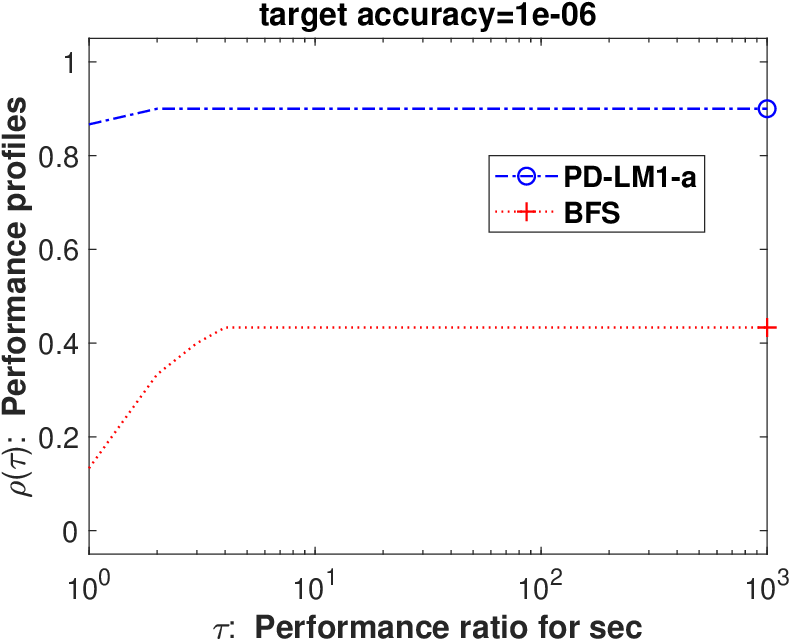}}{\includegraphics[width=4cm,height=4cm]{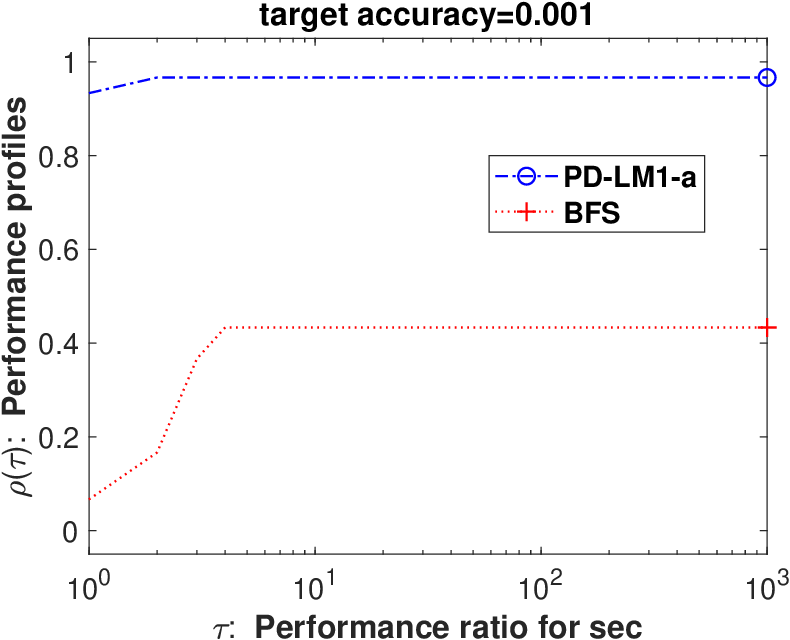}}\vspace{0.2cm}\\
	\end{tabular}}
	\caption{\justifying Performance profiles of {\tt PD-LM1-a} and {\tt BFS} in terms of {\tt nf2g} (first and second columns) and {\tt sec} (third and fourth columns), and with $q_{\sol}\le 10^{-6}$ (first and third columns) and $q_{\sol}\le 10^{-3}$ (second and fourth columns).}\label{f.f1}
\end{figure}

\begin{figure}[!http]
	\scalebox{0.72}{\begin{tabular}{ll}
			{\includegraphics[width=4cm,height=4cm]{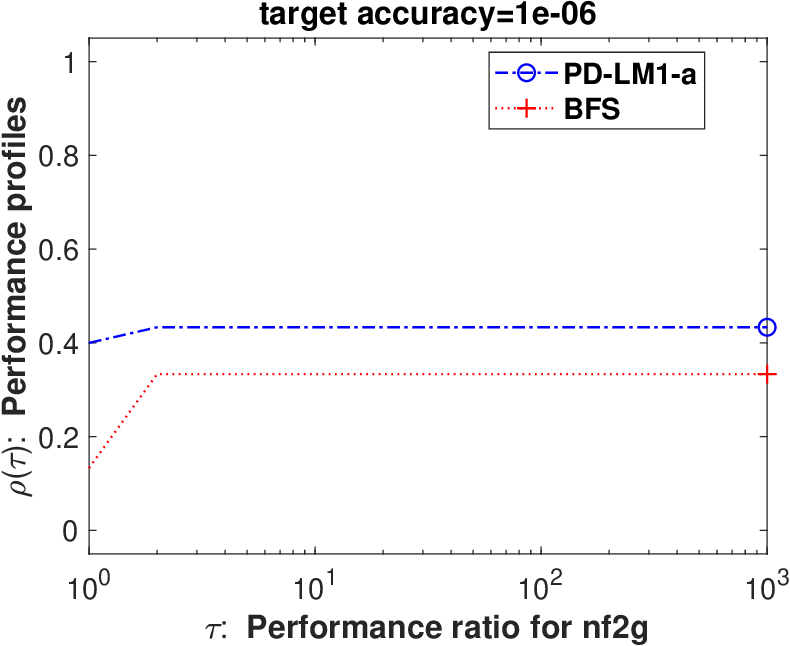}}\hspace{1mm}{\includegraphics[width=4cm,height=4cm]{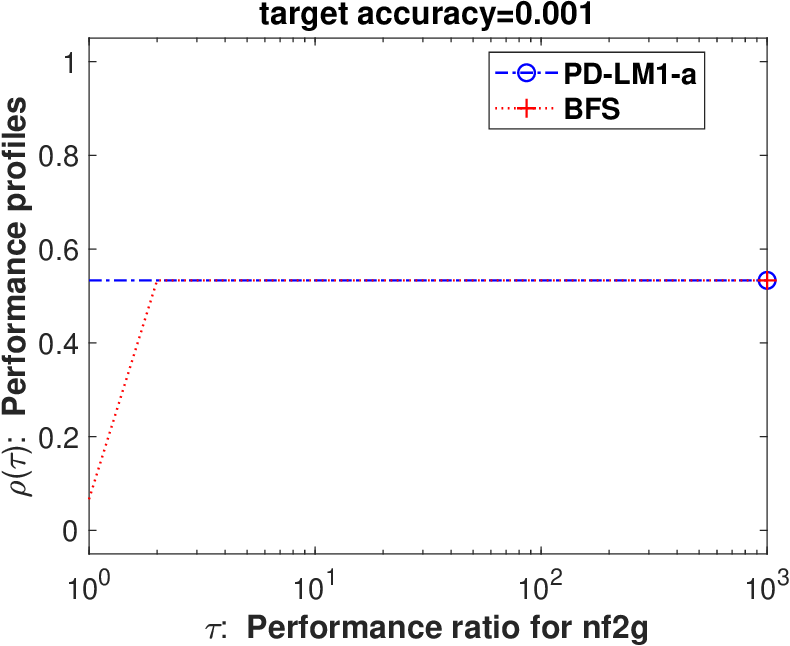}}{\includegraphics[width=4cm,height=4cm]{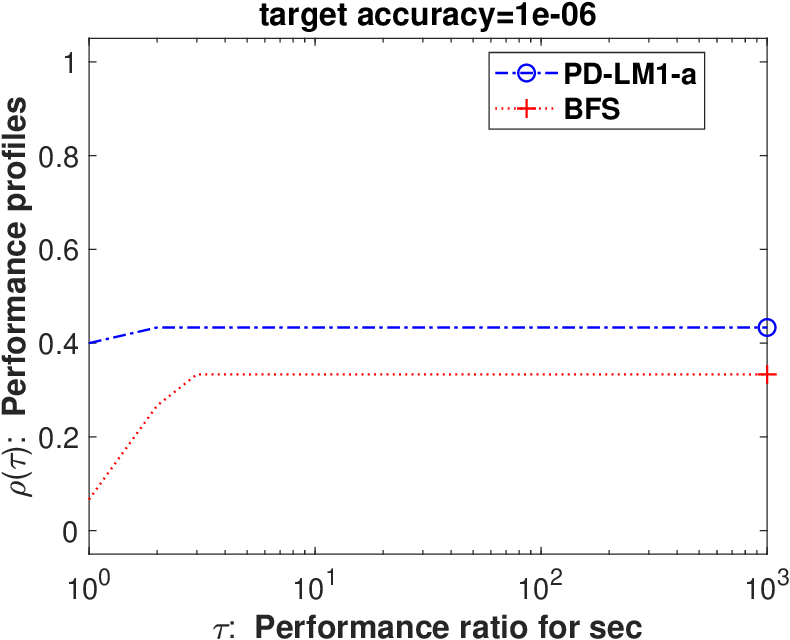}}{\includegraphics[width=4cm,height=4cm]{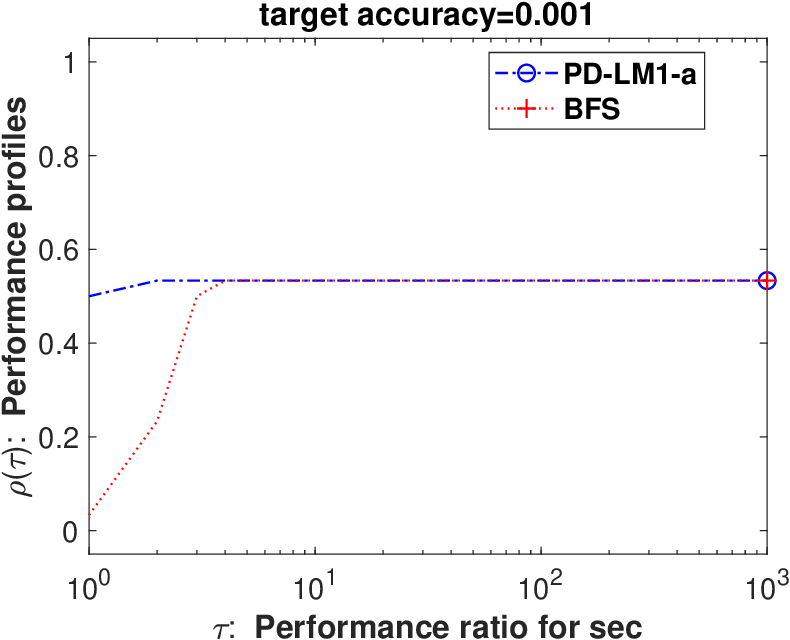}}\vspace{0.2cm}\\
	\end{tabular}}
	\caption{\justifying Performance profiles of {\tt PD-LM1-a} and {\tt BFS} in terms of {\tt nf2g} (first and second columns) and {\tt sec} (third and fourth columns), and with $\mathrm{rg}_{S}(x_{\sol})\le 10^{-6}$ (first and third columns) and $\mathrm{rg}_{S}(x_{\sol})\le 10^{-3}$ (second and fourth columns).}\label{f.f2}
\end{figure}

\clearpage
\subsubsection{{\tt PD-LM1-a} versus {\tt ZCWS}}

From Figure~\ref{f.f9}, {\tt PD-LM1-a} demonstrates higher efficiency than {\tt ZCWS} with respect to both cost measures, {\tt nf2g} and {\tt sec}, and shows greater robustness in computing approximate global minimizers.

From Figure~\ref{f.f10}, for high accuracy $\epsilon = 10^{-6}$, {\tt PD-LM1-a} outperforms {\tt ZCWS} in terms of the cost measures {\tt nf2g} and {\tt sec}, and is also more robust in computing {\tt CC-S} stationarity points. For lower accuracy $\epsilon = 10^{-3}$, {\tt PD-LM1-a} remains more efficient than {\tt ZCWS} with respect to {\tt sec}, whereas {\tt ZCWS} is more efficient than {\tt PD-LM1-a} with respect to {\tt nf2g}. Moreover, for both low and high accuracy levels, {\tt PD-LM1-a} exhibits greater robustness than {\tt ZCWS}.

\begin{figure}[!http]
	\scalebox{0.72}{\begin{tabular}{ll}
			{\includegraphics[width=4cm,height=4cm]{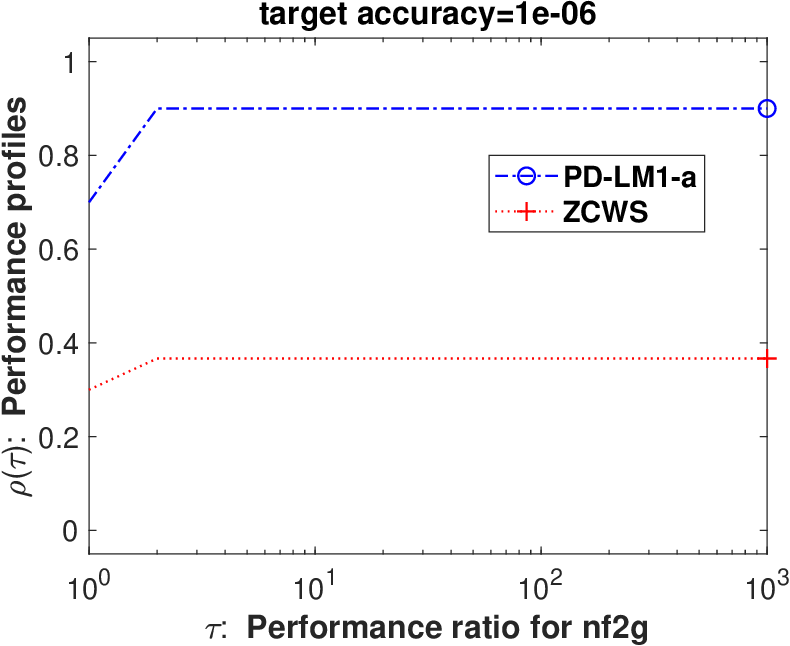}}\hspace{1mm}{\includegraphics[width=4cm,height=4cm]{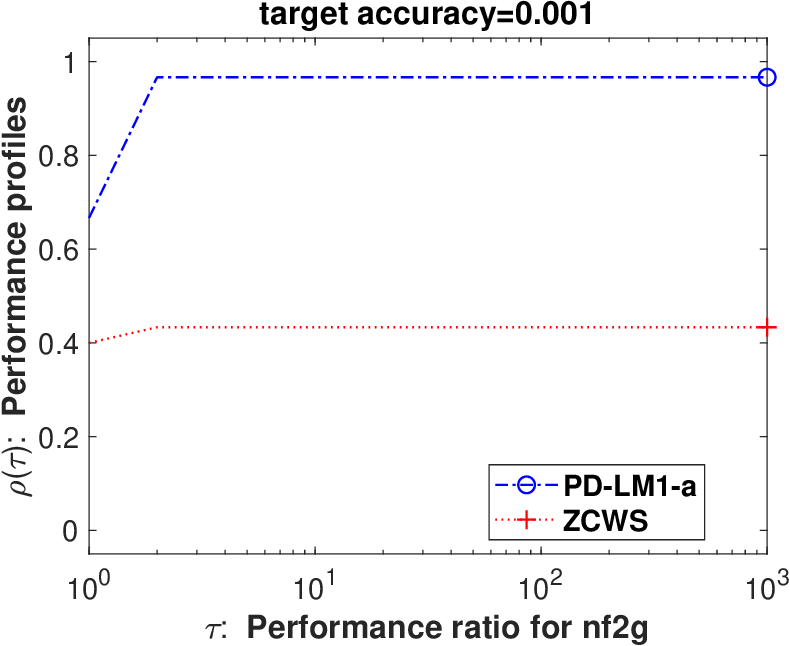}}{\includegraphics[width=4cm,height=4cm]{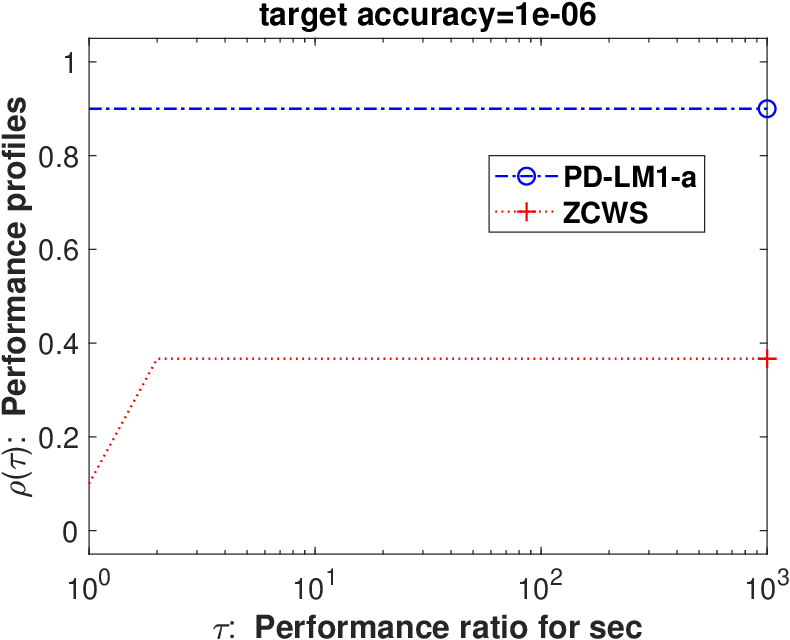}}{\includegraphics[width=4cm,height=4cm]{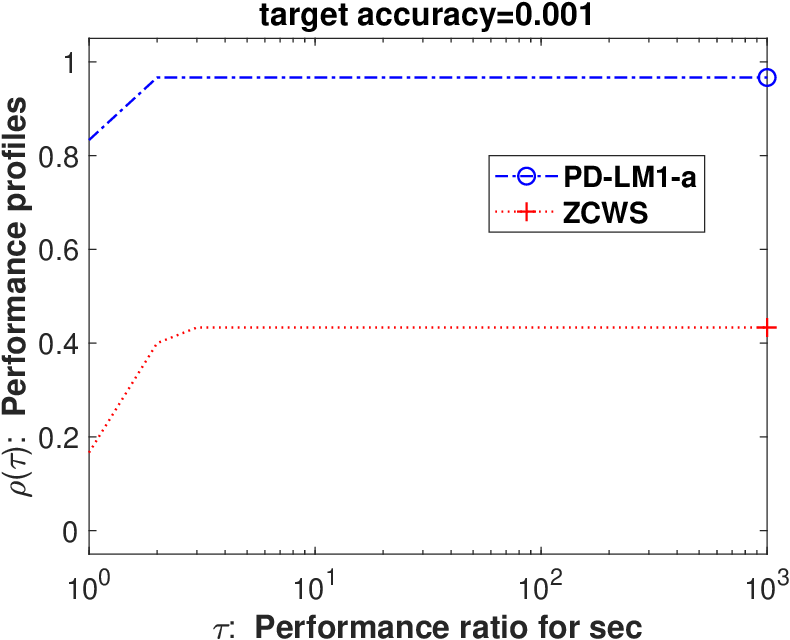}}\vspace{0.2cm}\\
	\end{tabular}}
	\caption{\justifying Performance profiles of {\tt PD-LM1-a} and {\tt ZCWS} in terms of {\tt nf2g} (first and second columns) and {\tt sec} (third and fourth columns), and with $q_{\sol}\le 10^{-6}$ (first and third columns) and $q_{\sol}\le 10^{-3}$ (second and fourth columns).}\label{f.f9}
\end{figure}

\begin{figure}[!http]
	\scalebox{0.72}{\begin{tabular}{ll}
			{\includegraphics[width=4cm,height=4cm]{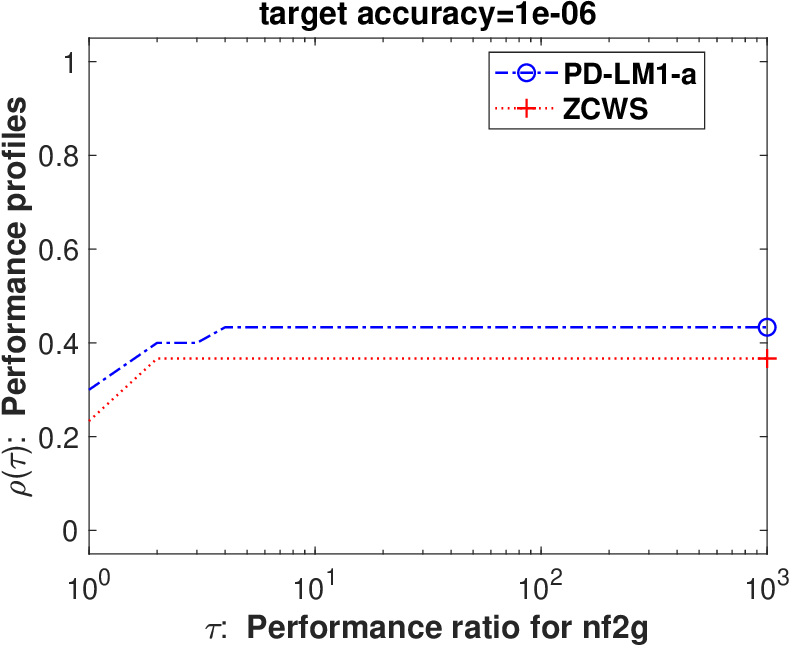}}\hspace{1mm}{\includegraphics[width=4cm,height=4cm]{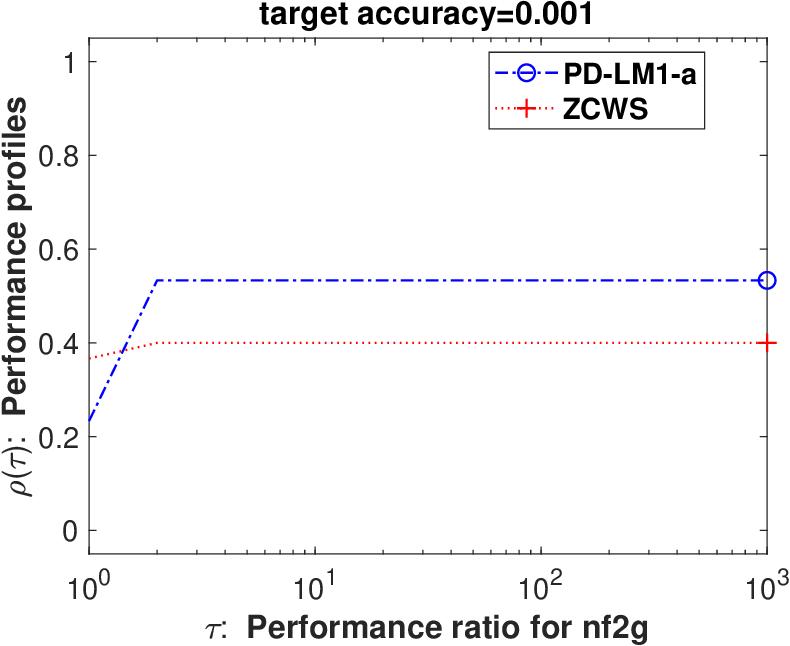}}{\includegraphics[width=4cm,height=4cm]{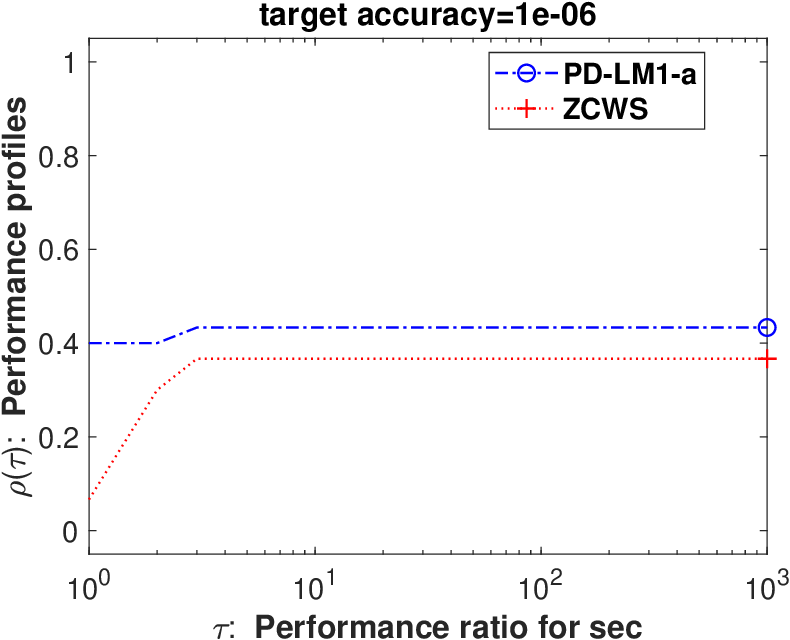}}{\includegraphics[width=4cm,height=4cm]{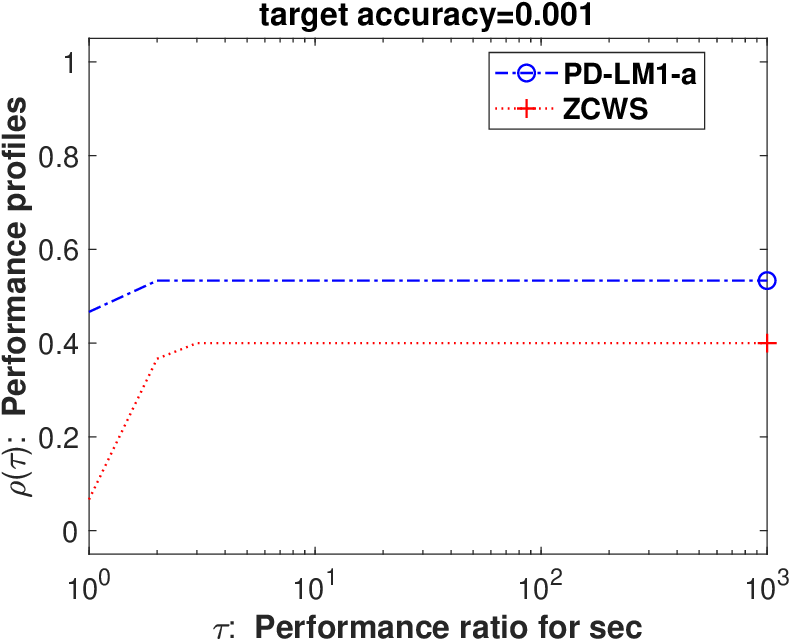}}\vspace{0.2cm}\\
	\end{tabular}}
	\caption{\justifying Performance profiles of {\tt PD-LM1-a} and {\tt ZCWS} in terms of {\tt nf2g} (first and second columns) and {\tt sec} (third and fourth columns), and with $\mathrm{rg}_{S}(x_{\sol})\le 10^{-6}$ (first and third columns) and $\mathrm{rg}_{S}(x_{\sol})\le 10^{-3}$ (second and fourth columns).}\label{f.f10}
\end{figure}

	\clearpage

\clearpage
\subsubsection{{\tt PD-LM1-a} versus {\tt PSS}}

From Figures~\ref{f.f3}-\ref{f.f4}, {\tt PD-LM1-a} is more efficient with respect to the two cost measures {\tt nf2g} and {\tt sec} and more robust than {\tt PSS} for computing both approximate global minimizers and {\tt CC-S} stationarity points.

\begin{figure}[!http]
	\scalebox{0.72}{\begin{tabular}{ll}
			{\includegraphics[width=4cm,height=4cm]{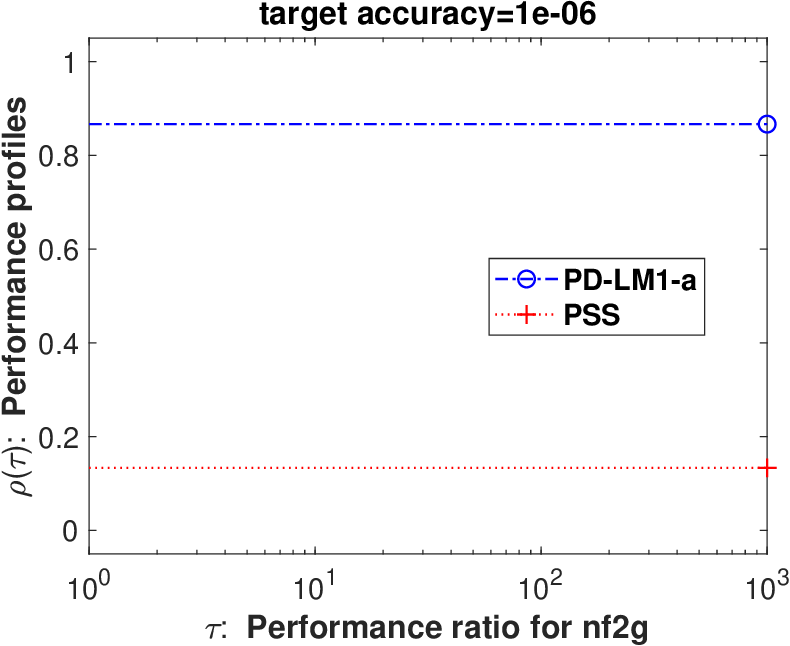}}\hspace{1mm}{\includegraphics[width=4cm,height=4cm]{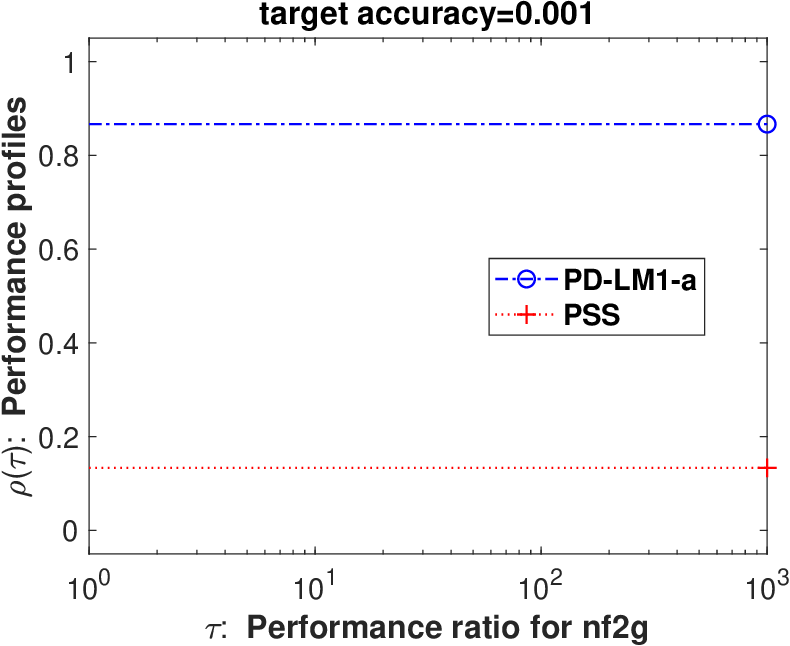}}{\includegraphics[width=4cm,height=4cm]{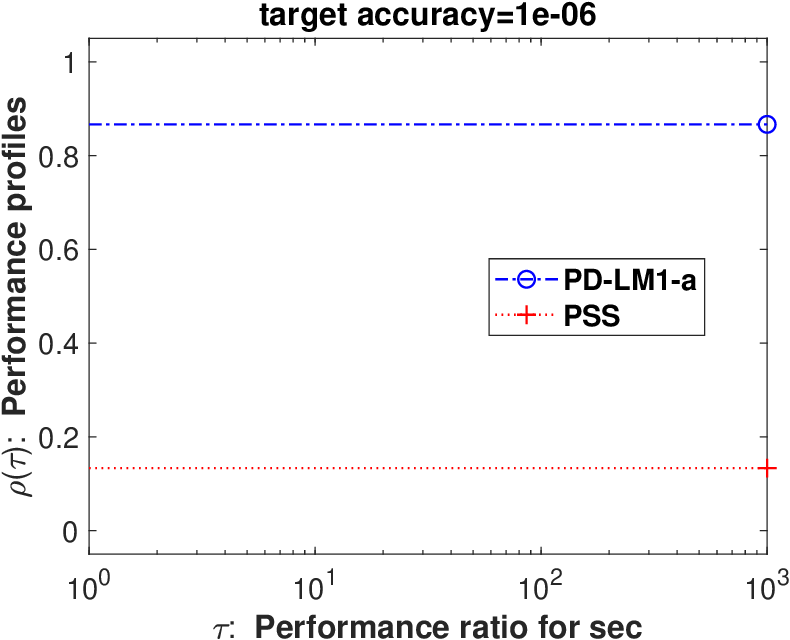}}{\includegraphics[width=4cm,height=4cm]{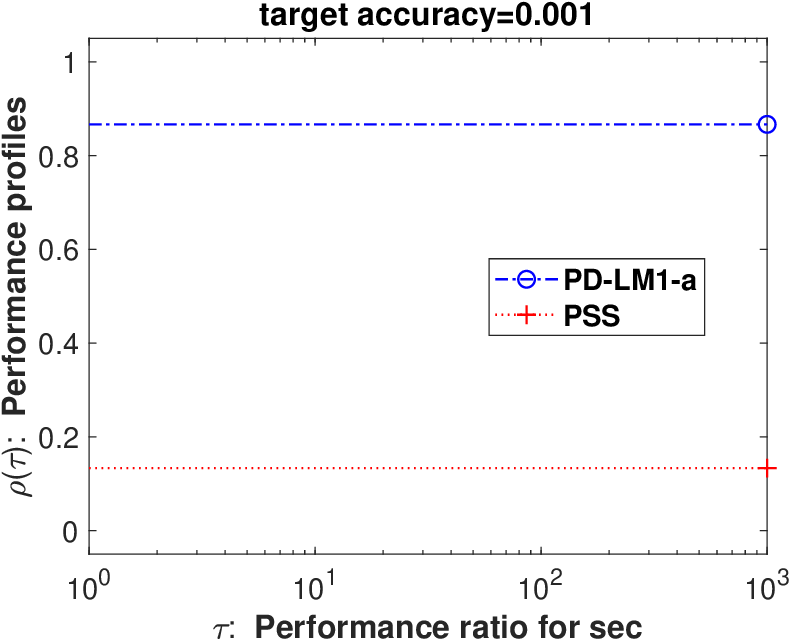}}\vspace{0.2cm}\\
	\end{tabular}}
	\caption{\justifying Performance profiles of {\tt PD-LM1-a} and {\tt PSS} in terms of {\tt nf2g} (first and second columns) and {\tt sec} (third and fourth columns), and with $q_{\sol}\le 10^{-6}$ (first and third columns) and $q_{\sol}\le 10^{-3}$ (second and fourth columns).}\label{f.f3}
\end{figure}

\begin{figure}[!http]
	\scalebox{0.72}{\begin{tabular}{ll}
			{\includegraphics[width=4cm,height=4cm]{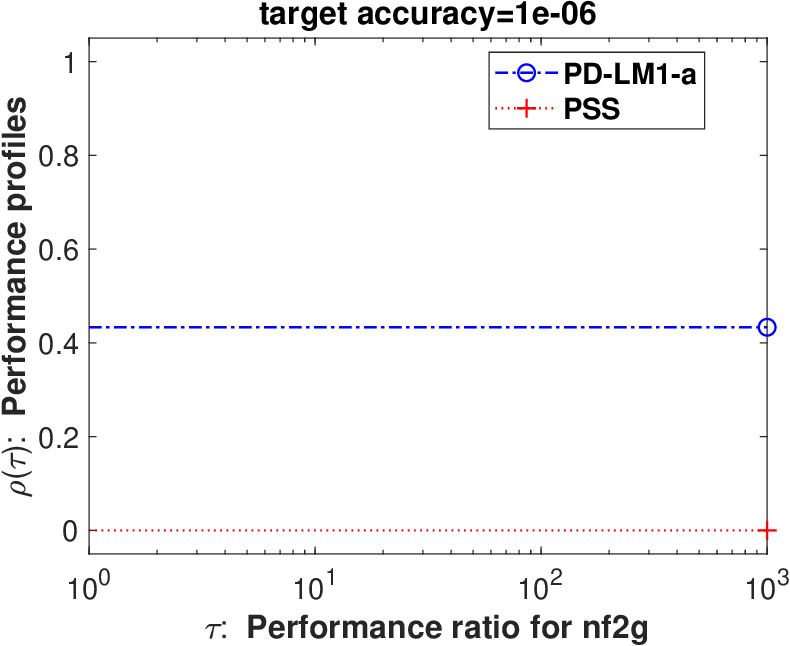}}\hspace{1mm}{\includegraphics[width=4cm,height=4cm]{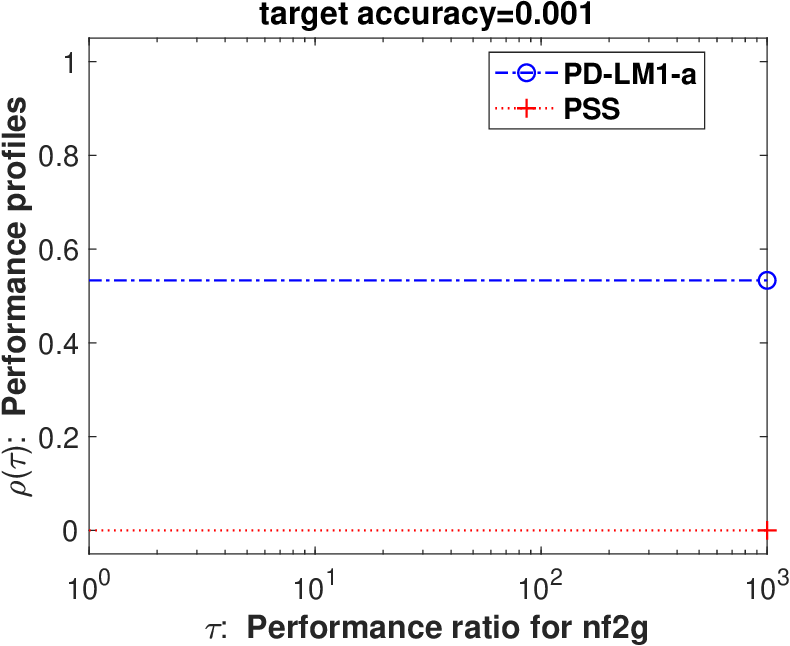}}{\includegraphics[width=4cm,height=4cm]{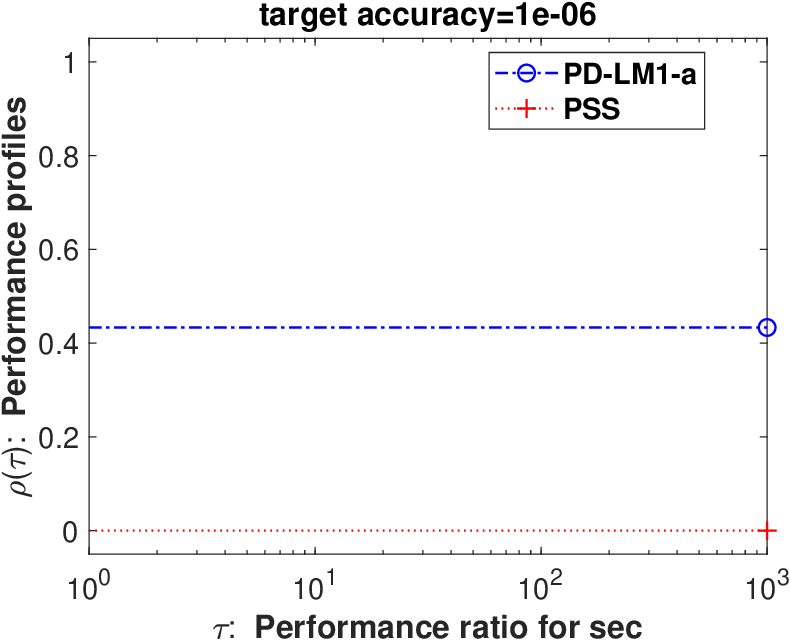}}{\includegraphics[width=4cm,height=4cm]{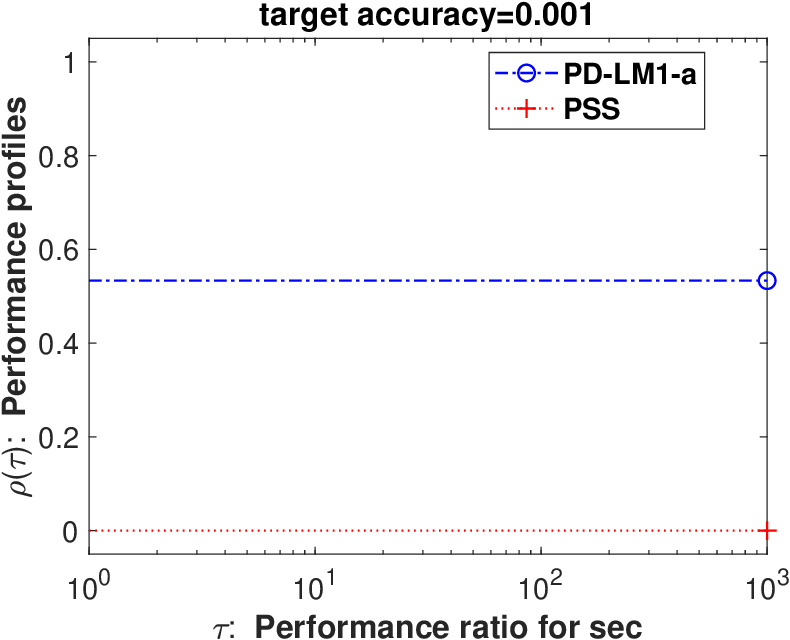}}\vspace{0.2cm}\\
	\end{tabular}}
	\caption{\justifying Performance profiles of {\tt PD-LM1-a} and {\tt PSS} in terms of {\tt nf2g} (first and second columns) and {\tt sec} (third and fourth columns), and with $\mathrm{rg}_{S}(x_{\sol})\le 10^{-6}$ (first and third columns) and $\mathrm{rg}_{S}(x_{\sol})\le 10^{-3}$ (second and fourth columns).}\label{f.f4}
\end{figure}

\clearpage
\subsubsection{{\tt PD-LM1-a} versus {\tt GSS}}

From Figures~\ref{f.f7}-\ref{f.f8}, {\tt PD-LM1-a} is more efficient with respect to the two cost measures {\tt nf2g} and {\tt sec} and more robust than {GSS} for computing both approximate global minimizers and {\tt CC-S} stationarity points.

\begin{figure}[!http]
	\scalebox{0.72}{\begin{tabular}{ll}
			{\includegraphics[width=4cm,height=4cm]{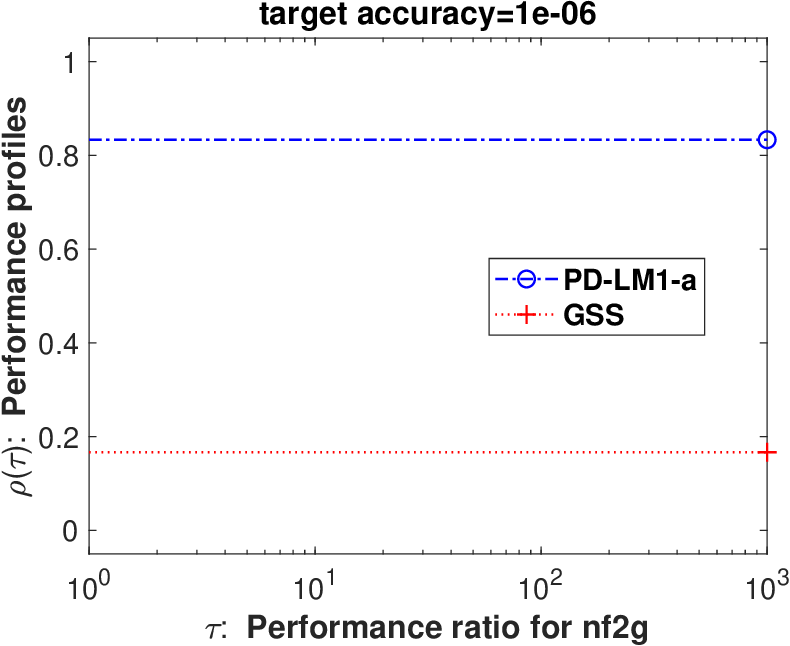}}\hspace{1mm}{\includegraphics[width=4cm,height=4cm]{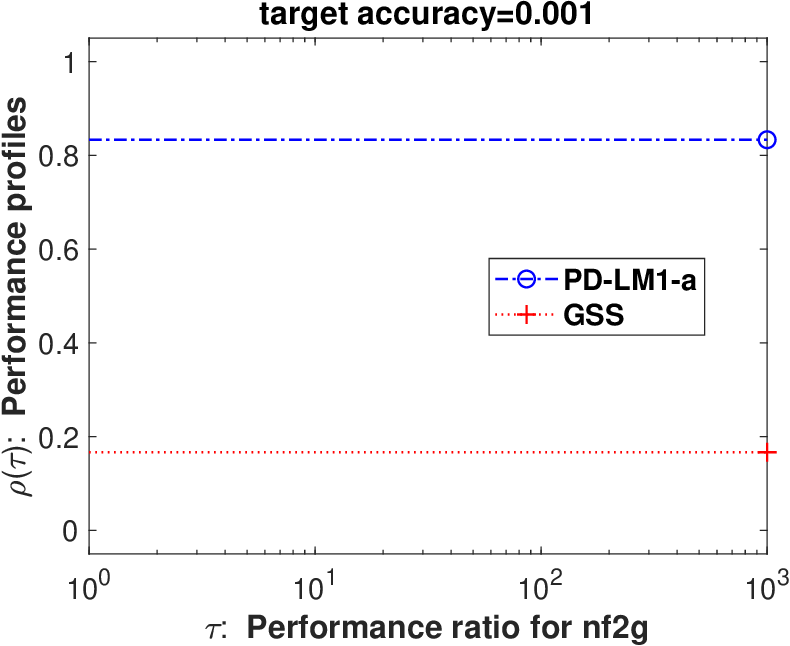}}{\includegraphics[width=4cm,height=4cm]{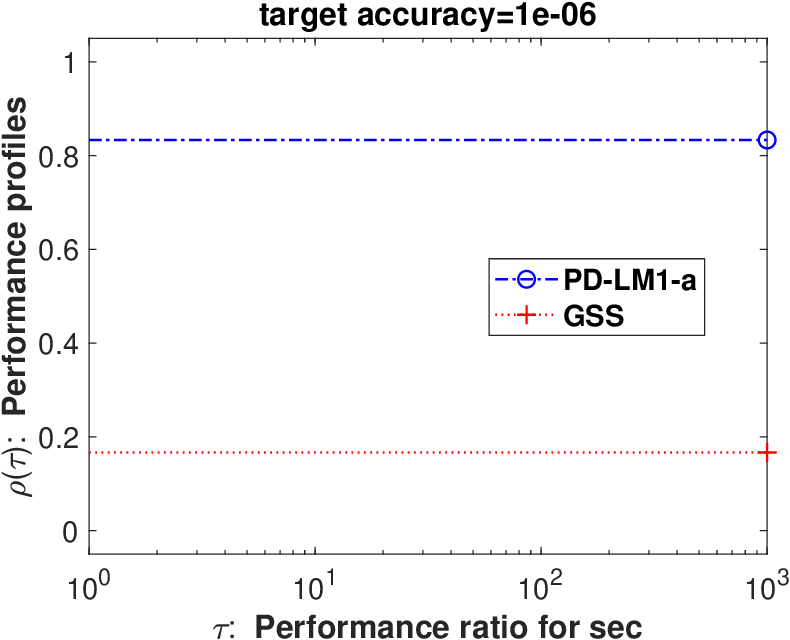}}{\includegraphics[width=4cm,height=4cm]{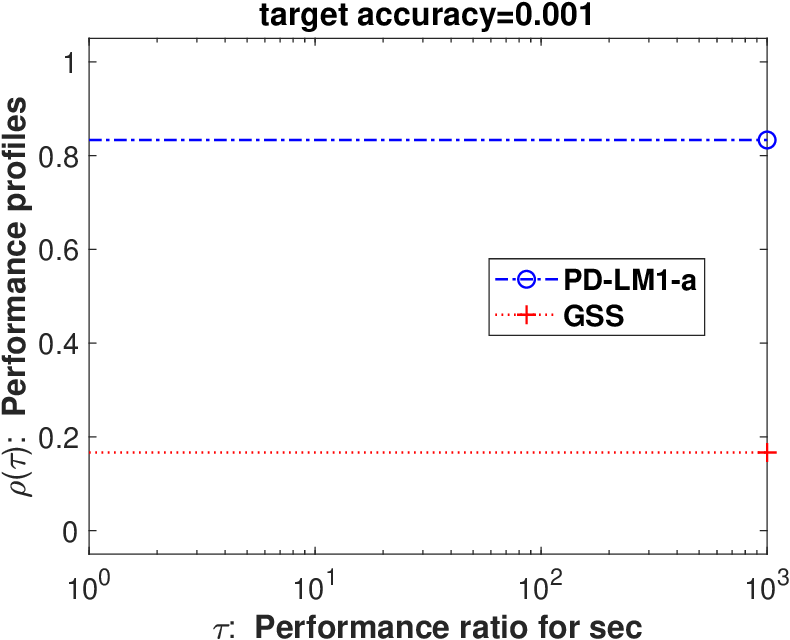}}\vspace{0.2cm}\\
	\end{tabular}}
	\caption{\justifying Performance profiles of {\tt PD-LM1-a} and {\tt GSS} in terms of {\tt nf2g} (first and second columns) and {\tt sec} (third and fourth columns), and with $q_{\sol}\le 10^{-6}$ (first and third columns) and $q_{\sol}\le 10^{-3}$ (second and fourth columns).}\label{f.f7}
\end{figure}

\begin{figure}[!http]
	\scalebox{0.72}{\begin{tabular}{ll}
			{\includegraphics[width=4cm,height=4cm]{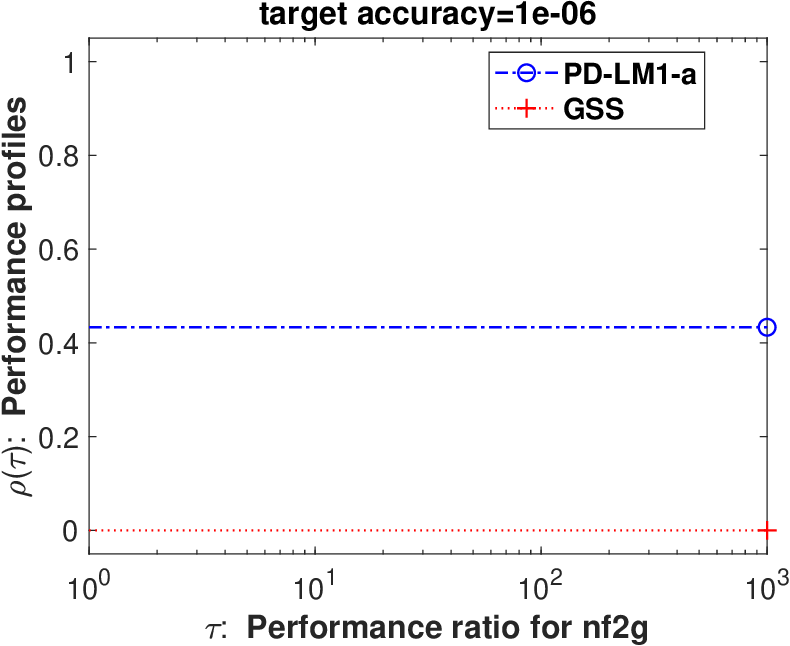}}\hspace{1mm}{\includegraphics[width=4cm,height=4cm]{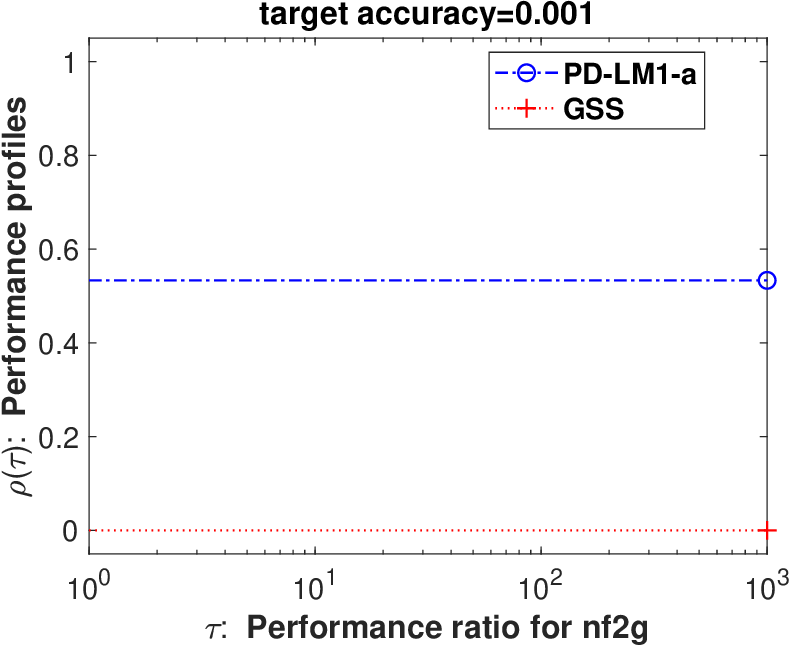}}{\includegraphics[width=4cm,height=4cm]{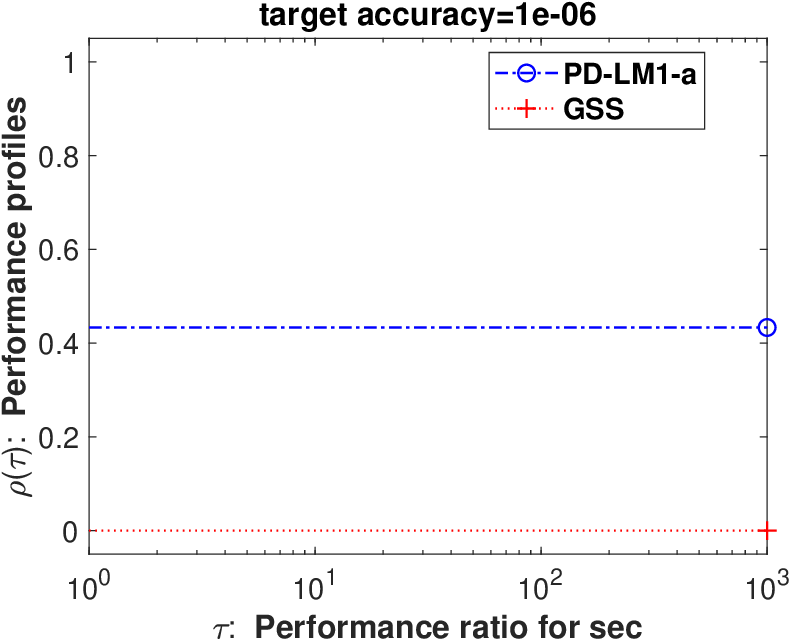}}{\includegraphics[width=4cm,height=4cm]{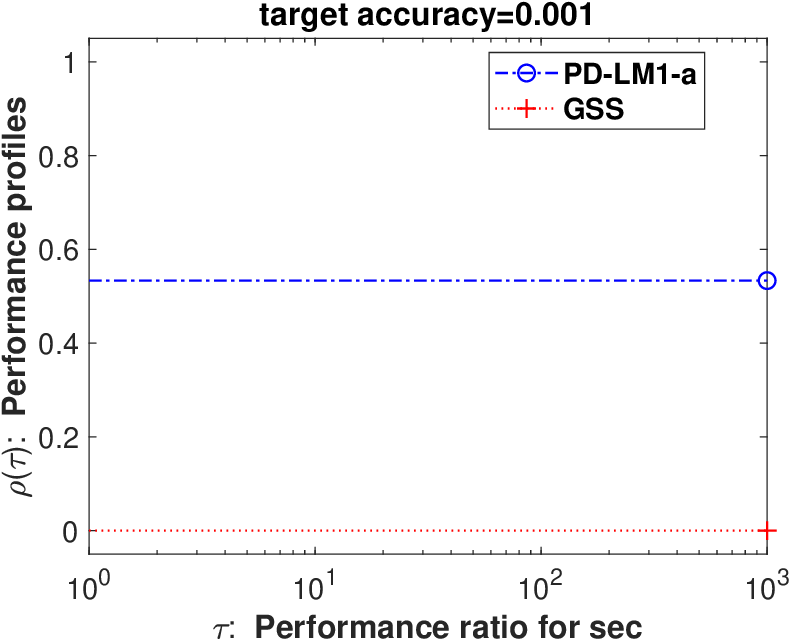}}\vspace{0.2cm}\\
	\end{tabular}}
	\caption{\justifying Performance profiles of {\tt PD-LM1-a} and {\tt GSS} in terms of {\tt nf2g} (first and second columns) and {\tt sec} (third and fourth columns), and with $\mathrm{rg}_{S}(x_{\sol})\le 10^{-6}$ (first and third columns) and $\mathrm{rg}_{S}(x_{\sol})\le 10^{-3}$ (second and fourth columns).}\label{f.f8}
\end{figure}

\section{Conclusion}\label{conclusions}

In this paper, we proposed an inexact penalty decomposition algorithm for
minimization over sparse symmetric sets.
The method is based on a two-block decomposition scheme applied to a sequence of
penalized subproblems.
At each iteration, the first subproblem is solved in closed form with respect to
the primal variable, without imposing sparsity or symmetry constraints, while the
second subproblem enforces sparsity and symmetry through an explicit projection
onto a restricted feasible set.
This structure yields a computationally efficient framework that separates smooth
model-based updates from sparse projection steps.

To enable scalability in large-scale settings, we introduced four low-cost
diagonal Hessian approximation schemes.
Three of these are based on limited-memory information obtained from differences
of recent iterates and gradients, while the fourth exploits a controlled
distribution of diagonal entries to promote numerical stability.
Extensive numerical experiments demonstrate that, with these approximations, the
proposed method is competitive with several state-of-the-art algorithms, including
{\tt IHT} \cite{Beck2013}, {\tt PSS} \cite{Beck2013}, {\tt GSS} \cite{Beck2013},
{\tt BFS} \cite{Beck2016}, and {\tt ZCWS} \cite{Beck2016}.

In finite-precision arithmetic, we employed an enhanced line search strategy based
on either backtracking or extrapolation.
This procedure evaluates the quadratic model at trial points while computing the
true objective only at accepted steps, thereby ensuring sufficient model decrease
at low computational cost and improving robustness near stationarity points.

From an algorithmic perspective, we incorporated a {\tt BFS} warm-start strategy,
optionally refined by a restricted {\tt FISTA} step, to generate strong initial
supports.
To mitigate stagnation effects in difficult nonconvex landscapes, a lightweight
{\tt PSS} perturbation is invoked selectively to introduce small support
modifications, allowing the algorithm to escape unfavorable stationarity regions and
resume stable convergence.

From a theoretical standpoint, we established global convergence results under a
new gradient growth condition that is strictly weaker than Lipschitz continuity
from the origin.
Under this assumption and a bounded-penalty regime, every accumulation point of
the outer iteration sequence is shown to be both basic feasible and
cardinality-constrained Mordukhovich ({\tt CC-M}) stationarity for the original
problem.
These guarantees bridge the gap between practical penalty decomposition schemes
and the strongest available first-order optimality theory for
cardinality-constrained optimization.

The numerical results further indicate that, in finite-precision arithmetic and
across a wide range of test problems and accuracy requirements, the proposed method
consistently produces high-quality sparse solutions.
Using objective-based accuracy measures and intrinsic strong stationarity
residuals, the algorithm exhibits favorable efficiency and robustness with respect
to both computational cost measures considered, namely {\tt nf2g} and {\tt sec}.
Overall, the proposed quasi-Newton penalty decomposition framework provides a
robust and scalable approach for structured sparse optimization, combining strong
theoretical guarantees with competitive practical performance.
\end{sloppypar}

\bfi{Acknowledgements}
We are grateful to Nadav Hallak for providing the MATLAB implementations of the algorithms described in \cite{Beck2016}.

\bfi{Funding}  The second author acknowledges financial support of the Austrian Science Foundation under \url{https://doi.org/10.55776/PAT2747625}.


\end{document}